\documentclass[leqno,titlepage,11pt]{article}

\pagestyle{headings}
\usepackage[a4paper, hdivide={4.3cm,,1.7cm},vdivide={3.48cm,,3.48cm}]{geometry}

\usepackage{amssymb}
\usepackage{amsmath}
\usepackage{amsthm}
\usepackage{enumerate}
\usepackage{amstext}
\usepackage{graphicx}			
\usepackage{hhline}
\usepackage{psfrag}
\usepackage{float}
\usepackage[dvips]{epsfig}
\usepackage{rotating}
\usepackage[utf8]{inputenc}
\usepackage[T1]{fontenc}



\usepackage{comment}

\usepackage{pstricks}

%
%
%
%
%


%
%
%
%
%
%
%
%

%
\includecomment{comment}

\includecomment{explainDef}


\includecomment{explainText}


\includecomment{explainVor}




\includecomment{grothaus}

\includecomment{profireader}

\excludecomment{scherz}

\usepackage{diagrams}



 \sloppy \textwidth=16truecm \textheight=25truecm \voffset=-1cm
 \hoffset=-1.1cm \leftmargin=2truecm

\makeatletter\@addtoreset{equation}{section}\makeatother

\allowdisplaybreaks

\DeclareSymbolFont{msbm}{U}{msb}{m}{n}
\DeclareMathSymbol{\N}{\mathalpha}{msbm}{'116}
\DeclareMathSymbol{\R}{\mathalpha}{msbm}{'122}

\author{Benedict Baur}
\title{Core property of smooth contractive embeddable functions for an elliptic operator}

\begin{document}

\newtheoremstyle{theorem}%
{3pt}
{3pt}
{\itshape}
{}%
{\bfseries}
{}
{.5em}
{}%

\newtheoremstyle{lemma}%
{3pt}
{3pt}
{}
{}%
{\bfseries}
{}
{.5em}
{}%

\newtheoremstyle{definition}%
{3pt}
{3pt}
{}
{}%
{\bfseries}
{}
{.5em}
{}%

\newtheoremstyle{remark}%
{3pt}
{3pt}
{}
{}%
{}
{}
{.5em}
{}%

\theoremstyle{theorem} 

\newtheorem{theorem}{Theorem}[section] 

\theoremstyle{lemma} 
\newtheorem{lemma}[theorem]{Lemma}
\newtheorem{corollary}[theorem]{Corollary}

\theoremstyle{definition} 
\newtheorem{definition}[theorem]{Definition}
\newtheorem{assumption}[theorem]{Assumption}

\theoremstyle{remark} 
\newtheorem{remark}[theorem]{Remark}

\newcommand{\Laplace}{\Delta}
\newcommand{\Om}{\Omega}
\newcommand{\Omclo}{\overline{\Omega}}

\newcommand{\Czero}[1]{C_0^{0}(#1)}
\newcommand{\Czeroalpha}[1]{C^{0,\alpha}(#1)}
\newcommand{\Czerozero}[1]{C^{0}(#1)}
\newcommand{\Czeroalphazero}[1]{C^{0,\alpha}(#1)}

\newcommand{\Ctwo}[1]{C^{2}(#1)}
\newcommand{\Ctwoalpha}[1]{C^{2,\alpha}(#1)}
\newcommand{\Ctwoalphac}[1]{C_c^{2,\alpha}(#1)}

\newcommand{\Ctwoalphacompacttwo}{C^{2,\alpha}_c(\R^n)}

\newcommand{\Bplus}[1]{B^{+}_{#1} (0)}
\newcommand{\Bminus}[1]{B^{-}_{#1} (0)}
\newcommand{\Bzero}[1]{B^{0}_{#1} (0)}
\newcommand{\BRplus}{\Bplus{R}}
\newcommand{\BRplusclo}{\overline{\Bplus{R}}}
\newcommand{\BRminus}{\Bminus{R}}
\newcommand{\BRzero}{\Bzero{R}}
\newcommand{\BR}{B_R(0)}
\newcommand{\Rhalf}{\frac{R}{2}}
\newcommand{\BRhalfclo}{\Bplus{\Rhalf} \cup \Bzero{\Rhalf}}
\newcommand{\emb}{\hookrightarrow}
\newcommand{\firstn}{x^{(n-1)}}
\newcommand{\firstnx}{x^{(n-1)}}
\newcommand{\firstny}{y^{(n-1)}}
\renewcommand{\mod}[1]{|#1|}

\def\Hess{\rm H}

\newcommand{\Ctwoalphaf}{C^{2,\alpha}}
\newcommand{\Czeroalphaf}{C^{0,\alpha}}

\newcommand{\Ckalpha}{C^{k,\alpha}}
\newcommand{\Ctwof}{C^{2}}

\newcommand{\floone}{F^{1}}

\newcommand{\pos}{(\firstn+g(\firstn)x_n)}
\newcommand{\dx}[1]{\frac{d}{dx_{#1}}}
\newcommand{\pdx}[1]{\partial_{x_{#1}}}
\newcommand{\pdy}[1]{\partial_{y_{#1}}}

\newcommand{\pdxo}{\pdx{}}

\newcommand{\supr}[1]{\underset{#1}{\sup}}
\newcommand{\infi}[1]{\underset{#1}{\inf}}

\newcommand{\sumfromto}[2]{\overset{#2}{\underset{i=#1}{\sum}}}
\newcommand{\sumfromtoj}[2]{\overset{#2}{\underset{j=#1}{\sum}}}
\newcommand{\mysum}[2]{\overset{#2}{\underset{#1=1}{\sum }}}
\newcommand{\sumij}{\underset{i,j}{\sum}}
\newcommand{\sumi}{\underset{i}{\sum}}
\newcommand{\sumj}{\underset{j}{\sum}}

\newcommand{\DL}{\{u \in \Ctwoalpha{\Omclo} \ | \ u=0,\ Lu=0 \ \text{on} \ \partial \Om \}}
\newcommand{\defL}{\mysum{i,j}{n} a_{ij} \pdx{i}\pdx{j} u + \sumfromto{1}{n} b_{i} \pdx{i} u + c u}
\newcommand{\weightedNorm}[3]{\| #1 \|_{#2}^{#3}}  
\newcommand{\weightedNormDef}[4]{\supr{x \in #2 } (d(x, #3))^{#4} | #1(x) |}
\newcommand{\weightedTotalNorm}{\| u \|'_{\Ctwoalpha{\Om}}}
\newcommand{\weightedTotalNormBound}{\| u \|'_{\Ctwoalpha{\Om \cup T}}}

\newcommand{\norm}[2]{\| #1 \|_{#2}}

\newcommand{\interior}[1]{\overset{\circ}{#1}}
\newcommand{\clo}[1]{\overline{#1}}
\newcommand{\diracdelta}[2]{\delta_{#1}({#2})}
\newcommand{\dualitymap}[2]{\langle #1 , #2 \rangle}
\newcommand{\eucp}[2]{#1 \bullet #2}

\newcommand{\traceDiffOp}[2]{tr(#1 \Hess \ #2)}

\newcommand{\limit}[1]{\overset{#1}{\longrightarrow}}
\newcommand{\limk}{\limit{k \rightarrow \infty}}
\newcommand{\limn}{\limit{n \rightarrow \infty}}
\newcommand{\limi}{\limit{i \rightarrow \infty}}
\newcommand{\limx}{\limit{x \rightarrow x_0}}

\newcommand{\half}[1]{\frac{#1}{2}}

\newcommand{\epsi}{\varepsilon}
\newcommand{\whatthisis}{\text{thesis}} 
\newcommand{\darstellung}{\text{presentation}} 
\newcommand{\ellip}{\lambda} 
\newcommand{\coeffbound}{\Lambda} 
\newcommand{\dualitysetnull}[1]{\textit{F}(#1)}
\newcommand{\dualityset}{\dualitysetnull{u}}
\newcommand{\dualitysetx}{\dualitysetnull{x}}
\newcommand{\bezInvFunc}{\text{Inverse function theorem }}
\newcommand{\isomorph}{\simeq}

\newcommand{\hoelder}{\text{hölder }}


\newcommand{\harm}[1]{\overline{#1}}
\newcommand{\boundf}{g} 
\newcommand{\mtilde}{\widetilde}
\newcommand{\Id}{\textbf{I}}
\newcommand{\supp}{\text{supp}}



\thispagestyle{empty}
\begin{verbatim}

\end{verbatim}

\begin{center}
\Large{\textsc {University of Kaiserslautern}}\\
\Large{\textsc {Department of Mathematics}} \\
\Large{\textsc {Functional Analysis and Stochastic Analysis Group}}
\end{center}
\begin{verbatim}









\end{verbatim}

\begin{center}
\textbf{\huge{\rule[-.1cm]{0cm}{0.1cm} Core property of smooth contractive embeddable functions for an elliptic differential operator}}
\end{center}
\begin{verbatim}







\end{verbatim}
\begin{center}
\textit{{Diploma Thesis}}\\[0.5ex]
\textit{{by}}\\[0.5ex]
\textit{{Benedict Baur}}
\end{center}
\begin{verbatim}



\end{verbatim}
\begin{center}
\textit{{Supervisor: Prof. Dr. M. Grothaus}}
\end{center}
\begin{verbatim}



\end{verbatim}


\begin{verbatim}
 
\end{verbatim}

\begin{center}
\textit{{March, 2010}}
\end{center}

\newpage
\tableofcontents
\newpage
\section{Introduction}
\begin{explainText}
This $\whatthisis$ is concerned with two assumptions (see \ref{assSemi} and \ref{assCont} below) made in \cite{Grothaus}, these lead to a representation of a solution to the Cauchy-Dirichlet problem for elliptic differential operators. The representation consists of an iterated sequence of integrals over elementary functions in terms of the coefficients of the differential operator.
\end{explainText}
Throughout this $\whatthisis$ let $n$ be the dimension of the space and $\alpha$ some positive number less or equal to $1$.
Let $\Om$ be a bounded region, $a_{ij}$, $b_{i}$, $c \in \Czeroalpha{\Omclo}$.
Assume that:
\begin{enumerate}
\item The matrices $(a_{ij}(x))$ are uniformly elliptic, i.e. there exists an $\ellip>0$ such that $\sumij a_{ij}(x) \xi_i \xi_j \geq \ellip \norm{\xi}{2}^2$ for all $x \in \Om$ and $\xi \in \R^n$,$\ellip > 0 $
\item $ \mod{a_{ij}(x)} \le \coeffbound$, $\mod{b_i(x)} \le \coeffbound$,$\mod{c(x)}\le \coeffbound$ for some $\coeffbound < \infty$
\end{enumerate}

For $u \in \Ctwo{\Omclo}$ define
\begin{eqnarray} L u := \defL \label{defG} 
\end{eqnarray}

We consider the Cauchy-Dirichlet problem in $\Om$. For $u_0 \in \Ctwo{\Omclo}$ find 
$$ u:[0,\infty) \times \Omclo \rightarrow \R  $$ with $u(t,\cdot) \in \Ctwo{\Omclo}$ and
$u(\cdot) \in C^1([0,\infty),\Czerozero{\Omclo})$ fulfilling:
 
\begin{eqnarray} 
 u'(t,x) = L u \ \forall \ t \ge 0 \nonumber \\
 u(0,\cdot) = u_0 \label{CauchyProblem} \\
 u(t,x) = 0 \ , x \ \in \ \partial \Om  \nonumber
\end{eqnarray}
In order to show the existence of a solution and to verify the representation of this solution two assumptions were made in \cite{Grothaus}:
Set
\begin{eqnarray}
D(L) = \DL \label{defDL}
\end{eqnarray}

\begin{assumption} \label{assSemi}
The closure of $(L,D(L))$ generates a strongly continuous operator semigroup on $\Czero{\Omclo}$, which solves \eqref{CauchyProblem} \ for $u_0 \in D(L)$. 
\end{assumption}
\begin{assumption} \label{assCont}
The set $\DL$  can be continuously and linearly embedded into $\Ctwoalphac{\R^n}$ such that $\| u \|_{C^0_c(\R^n)} = \| u \|_{C^0_0(\Om)}$
\end{assumption}
In other words the function space $\DL$ is a core for the generator of the resolving semigroup $(T_t)$ of \eqref{CauchyProblem}. These functions can be embedded linearly into $\Ctwoalphac{\R^n}$ such that the supremum is not increased.
For more details we refer the reader to \cite{Grothaus}.
It is important to note, that it is not sufficient for the extension operator to be bounded, it has to be a contraction.
This \whatthisis \ is organised as follows:  In the first section we investigate the regularity theory for elliptic partial differential equations, this theory is the basis for solving the resolvent equation associated to \eqref{CauchyProblem}.
In the second section we proof the assumptions for domains with $C^{4,\alpha}$ smooth boundaries and coefficients in $\Ctwoalpha{\Omclo}$.
These additional assumptions are necessary because the differential operator is transformed using the diffeomorphism which flattens out the boundary. The transformed differential operator posesses drift terms containing the second derivatives of this diffeomorphism. However to construct the embedding operator these coefficients must be in $C^{2,\alpha}(\partial \Om)$.
\section{Dirichlet problem for elliptic differential operators}

\def\LufOm{Lu = f \ \text{in} \  \Om}
In this section we consider the (stationary) Dirichlet problem for the elliptic differential operator as defined in \eqref{defG} or the associated resolvent equation respectively. 
\begin{explainVor}
The solution theory is based on maximum principles, a priori estimates on the $\Ctwoalphaf$-norm and the solvability on balls. The results stated here are based on the corresponding results for the Laplace-operator, which can be found in \cite{GilTru} in Chapter 1 and 2. In the following we present the necessary lemmas and theorems, but give not all proofs, they can be found in \cite{GilTru}. Another presentation of the theory can be found in \cite{Jost}.
\end{explainVor}

\begin{definition}[Dirichlet problem for elliptic equations] \label{DirProblemEllip} 

Let $L$ be the differential operator \eqref{defG}.
For a given $f \in C^0(\Omclo)$, $g \in C^0(\Omclo)$ find $u \in C^0(\Omclo) \cap \Ctwo{\Om} $ such that:
\begin{eqnarray} 
 \LufOm \\ \label{DirProblem}
 u = g \ \text{on} \ \partial \Om \nonumber
\end{eqnarray}

\end{definition}

\subsection{Maximum principles}
\begin{theorem}[Weak Maximum principle for c=0]{ \ } \label{WeakMax0}
\\ Let $L$ be the differential operator \eqref{defG} with $c = 0$, $u \in \Ctwo{\Om} \cap C(\clo{\Om})$, then the following statements hold:
\begin{enumerate}
\item If $Lu \geq 0 \ \ \text{in}  \ \Om$, then $ \supr{\Om} \ u = \supr{\partial \Om} \ u$
\item If $Lu \leq 0 \ \ \text{in}  \ \Om$, then $\infi{\Om} \ u = \infi{\partial \Om} \ u$
\item If $Lu = 0 \ \ \text{in}  \ \Om$, then $\supr{\Om} \ | u | = \supr{\partial \Om} \ | u |$
\end{enumerate}

\end{theorem}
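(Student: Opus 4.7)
The plan is to argue first in the strict inequality case and then reduce the non-strict case to it by a small perturbation.

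First I would handle the strict case for statement 1, i.e.\ assume temporarily that $Lu > 0$ in $\Om$. Suppose for contradiction that $u$ attains its maximum over $\clo{\Om}$ at an interior point $x_0 \in \Om$. Then $\nabla u(x_0) = 0$ and $\Hess \ u(x_0)$ is negative semidefinite. Since $(a_{ij}(x_0))$ is symmetric and positive definite by uniform ellipticity, we may diagonalise simultaneously and conclude that $\sumij a_{ij}(x_0) \pdx{i}\pdx{j} u(x_0) \le 0$. Combined with $c=0$ and $\nabla u(x_0)=0$, this gives $Lu(x_0) \le 0$, contradicting the strict positivity. Hence the maximum is attained on $\partial \Om$, and by continuity the suprema coincide.

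Next I would remove the strictness. For $\gamma > 0$ and $\epsi > 0$ set
\begin{equation*}
v_\epsi(x) := u(x) + \epsi\, e^{\gamma x_1}.
\end{equation*}
A direct computation yields
\begin{equation*}
L(e^{\gamma x_1}) = \bigl(\gamma^2 a_{11}(x) + \gamma\, b_1(x)\bigr)\, e^{\gamma x_1} \ \ge\ (\gamma^2 \ellip - \gamma \coeffbound)\, e^{\gamma x_1},
\end{equation*}
using uniform ellipticity ($a_{11}(x)\ge \ellip$) and the bound on $b_1$. Choosing $\gamma > \coeffbound / \ellip$ makes this expression strictly positive on $\Om$. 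Since $\Om$ is bounded, $e^{\gamma x_1}$ is bounded on $\Omclo$. Assuming $Lu \ge 0$, we obtain $L v_\epsi > 0$ in $\Om$, so the strict case applies and gives $\supr{\Om} v_\epsi = \supr{\partial \Om} v_\epsi$. Letting $\epsi \to 0$ and using uniform convergence of $v_\epsi$ to $u$ on $\Omclo$ yields statement 1.

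Statement 2 follows by applying statement 1 to $-u$, noting that $L(-u) = -Lu \ge 0$ and that $\supr{} (-u) = -\infi{} u$. Statement 3 then follows from statements 1 and 2: if $Lu = 0$, then both $\supr{\Om} u = \supr{\partial \Om} u$ and $\infi{\Om} u = \infi{\partial \Om} u$, whence $\supr{\Om} \mod{u} = \max(\supr{\partial \Om} u, -\infi{\partial \Om} u) = \supr{\partial \Om} \mod{u}$. I do not expect any serious obstacle; the only delicate point is the choice of the barrier $e^{\gamma x_1}$, which relies both on boundedness of $\Om$ and on the uniform bound on $b_1$ and uniform ellipticity to enforce $L(e^{\gamma x_1}) > 0$.
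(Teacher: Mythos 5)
Your proposal is correct and follows precisely the classical argument of Gilbarg--Trudinger (Theorem 3.1), which is exactly what the paper cites for this statement without reproducing the details: first the strict case $Lu>0$ via the second-derivative test at an interior maximum, then reduction of the non-strict case by perturbing with $\epsi\, e^{\gamma x_1}$ and choosing $\gamma > \coeffbound/\ellip$. Statements 2 and 3 are deduced from 1 exactly as in the standard presentation, so there is no gap and no divergence from the paper's intended approach.
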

The proof of this theorem can be found in \cite{GilTru} on page 31, Theorem 3.1.

\begin{corollary}{Weak maximum principle for $c \leq 0$} \label{WeakMax}
\\ Let $L$ be the differential operator \eqref{defG} with $c \le 0$, $u \in \Ctwo{\Om} \cap C(\clo{\Om})$, then the following statements hold:
\begin{enumerate}
\item If $Lu \geq 0 \ \ \text{in}  \ \Om$ then $\supr{\Om} \ u \leq \supr{\partial \Om} \ u^{+}$
\item If $Lu \leq 0 \ \ \text{in}  \ \Om$ then $\infi{\Om} \ u \geq - \supr{\partial \Om} \ u^{-}$
\item If $Lu = 0 \ \ \text{in}  \ \Om$ then $\supr{\Om} \ | u | = \supr{\partial \Om} \ |u| $
\end{enumerate}

\end{corollary}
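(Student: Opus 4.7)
The plan is to reduce to Theorem \ref{WeakMax0} by working on the subset where $u$ is positive (respectively negative), since there the lower-order term $cu$ has a favorable sign.

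For statement (1), I would introduce $\Om^{+} := \{x \in \Om : u(x) > 0\}$, which is open because $u$ is continuous. If $\Om^+ = \emptyset$ then $\sup_{\Om} u \le 0 \le \sup_{\partial \Om} u^+$ and we are done, so assume $\Om^+$ is nonempty. On $\Om^+$ define the operator $\tilde L := L - c\,\text{Id}$, which has the same uniformly elliptic principal part as $L$ but with zero order-zero coefficient. Since $c \le 0$ and $u > 0$ on $\Om^+$, we have $-cu \ge 0$, hence
\begin{equation*}
\tilde L u = L u - c u \ge 0 \quad \text{in } \Om^+.
\end{equation*}
Theorem \ref{WeakMax0} applied to $\tilde L$ on each connected component of $\Om^+$ then yields $\sup_{\Om^+} u = \sup_{\partial \Om^+} u$.

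Next I would analyse the topological boundary $\partial \Om^+$. Every point of $\partial \Om^+$ either lies in $\partial \Om$ or lies inside $\Om$, in which case continuity of $u$ forces $u = 0$ there. Consequently
\begin{equation*}
\sup_{\partial \Om^+} u \le \max\Bigl(\sup_{\partial \Om^+ \cap \partial \Om} u,\ 0\Bigr) \le \sup_{\partial \Om} u^+,
\end{equation*}
and combining with the previous display gives $\sup_{\Om^+} u \le \sup_{\partial \Om} u^+$. Outside $\Om^+$ one has $u \le 0 \le \sup_{\partial \Om} u^+$ trivially, proving (1). Statement (2) then follows immediately by applying (1) to $-u$ (note $L(-u) \ge 0$ and $(-u)^+ = u^-$). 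For (3), if $Lu = 0$ then (1) and (2) together bound $|u|$ in $\Om$ by $\sup_{\partial \Om} |u|$, while the reverse inequality holds because $\partial \Om \subset \clo{\Om}$.

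I do not expect a genuine obstacle: the only subtle point is recognising that inside $\Om$ the boundary of $\Om^+$ can only occur where $u$ vanishes, so that the auxiliary domain $\Om^+$ is still ``Dirichlet-controlled'' on its interior boundary by the value $0$, which is harmless when comparing with $u^+ \ge 0$. Everything else is a direct reduction to the already established $c = 0$ result.
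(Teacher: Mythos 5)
Your proof is correct and follows exactly the same approach as the reference the paper cites for this statement (Gilbarg--Trudinger, Corollary 3.2 on page 32): restrict to the open set $\Om^+=\{u>0\}$, pass to the reduced operator $L-c\,\mathrm{Id}$ where the $c=0$ theorem applies, then observe that the interior part of $\partial\Om^+$ contributes only zero values, and deduce (2) and (3) by symmetry. The paper itself gives no independent proof, so there is nothing further to compare.
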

The proof of the weak maximum principle can be found in \cite{GilTru} on page 32, Corollary 3.2.

\begin{corollary}{Uniqueness of a solution to the Dirichlet problem}
\\ There exists at most one solution in $C^0(\Omclo) \cap C^2(\Om)$ to \eqref{DirProblem}
\end{corollary}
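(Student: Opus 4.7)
The plan is the standard subtraction argument, reducing uniqueness to the $c \le 0$ weak maximum principle already established in Corollary \ref{WeakMax}. Suppose $u_1, u_2 \in C^0(\Omclo) \cap C^2(\Om)$ are both solutions to \eqref{DirProblem} for the same data $f$ and $g$, and set $v := u_1 - u_2$. Then by linearity of $L$, the difference satisfies $Lv = 0$ in $\Om$ and $v = 0$ on $\partial \Om$. Moreover $v \in C^0(\Omclo) \cap C^2(\Om)$, so it lies in the class to which Corollary \ref{WeakMax} applies.

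Next I would invoke part (3) of Corollary \ref{WeakMax}, which yields
\[
\supr{\Om} |v| \;=\; \supr{\partial \Om} |v| \;=\; 0,
\]
forcing $v \equiv 0$, i.e.\ $u_1 = u_2$.

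One point worth flagging: Corollary \ref{WeakMax} requires the sign condition $c \le 0$, which is not explicitly repeated in the statement of the uniqueness corollary. I would therefore make this hypothesis explicit at the start of the proof (or note that the uniqueness corollary is stated under the standing assumption $c \le 0$ inherited from the subsection). Without some such condition, uniqueness actually fails in general (as soon as $0$ becomes an eigenvalue of $L$ with Dirichlet boundary conditions), so this is the only genuine subtlety; otherwise the argument is an immediate two-line consequence of the weak maximum principle and no estimate needs to be grinded out.
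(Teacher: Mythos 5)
Your proof is correct and is precisely the argument the paper implies by placing this statement as a corollary immediately after the weak maximum principle (the paper itself gives no written proof). Your observation that the sign condition $c \le 0$ must be in force is also well taken, since it is a hypothesis of Corollary \ref{WeakMax} but is not repeated in the statement of the uniqueness corollary or in Definition \ref{DirProblemEllip}.
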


\begin{theorem}[Strong maximum principle]{\ } \label{StrongMax} 
\\ Let $L$ be the differential operator \eqref{defG} with $c \leq 0$, $u \in C^0(\Omclo) \cap C^2(\Om)$ with $Lu = 0$. Then $u$ cannot attain a non-negative maximum, unless it is constant.
\end{theorem}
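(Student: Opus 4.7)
The plan is to adapt Hopf's classical argument. Suppose for contradiction that $u$ attains a non-negative maximum $M \ge 0$ at some interior point without being constant, and set $\Sigma = \{x \in \Om : u(x) = M\}$ and $\Om^- = \Om \setminus \Sigma$. Since $u$ is continuous and non-constant, both sets are nonempty and $\Om^-$ is open. I would first select a point $y \in \Om^-$ that is strictly closer to $\Sigma$ than to $\partial \Om$ (take any $y_0 \in \Om^-$ and move it along a path toward a nearby point of $\Sigma$ until this strict inequality holds), and then let $B = B_R(y)$ be the largest open ball centered at $y$ with $B \subset \Om^-$. By construction $\partial B$ meets $\Sigma$ at some point $z$ which lies in the interior of $\Om$, and $u(z) = M$, $Du(z) = 0$.

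Next I would carry out the standard barrier construction. In the annulus $A = B_R(y) \setminus \overline{B_{R/2}(y)}$, define
\[ v(x) := e^{-\beta |x-y|^2} - e^{-\beta R^2}. \]
A direct computation using the uniform ellipticity bound $\sum a_{ij}\xi_i\xi_j \ge \ellip \|\xi\|^2$ and the coefficient bound $\coeffbound$ gives, for $x \in A$,
\[ Lv(x) = e^{-\beta |x-y|^2}\Bigl(4\beta^2 \sum a_{ij}(x_i-y_i)(x_j-y_j) - 2\beta\,\mathrm{tr}\,a(x) - 2\beta\, b\!\cdot\!(x-y) + c\Bigr) - c\,e^{-\beta R^2}, \]
and by choosing $\beta$ sufficiently large the bracketed expression is positive uniformly on $\overline A$; since $c \le 0$ and $e^{-\beta|x-y|^2} \ge e^{-\beta R^2}$ on $\overline A$, the subtracted term only helps, so $Lv > 0$ on $\overline A$. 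Now set $w := u + \epsilon v - M$. Because $u < M$ on the compact inner sphere $\partial B_{R/2}(y) \subset \Om^-$, one can pick $\epsilon > 0$ so small that $w \le 0$ there; on the outer sphere $\partial B_R(y)$ we have $v \equiv 0$ and $u \le M$, hence $w \le 0$ as well. Moreover $L(u-M) = -cM \ge 0$ (using $c \le 0$, $M \ge 0$), and adding $\epsilon Lv > 0$ gives $Lw > 0$ in $A$. Applying Corollary \ref{WeakMax} (weak maximum principle for $c \le 0$) on $A$ yields $\sup_A w \le \sup_{\partial A} w^+ = 0$.

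Finally I would extract the contradiction from the interior maximum $z$. Since $z \in \partial B$ satisfies $w(z) = 0$ and $w \le 0$ in $\overline A$, the outward (radial) normal derivative at $z$ must satisfy $\partial_\nu w(z) \ge 0$. But $Du(z) = 0$ because $u$ attains its interior maximum at $z$, and a direct calculation gives $\partial_\nu v(z) = -2\beta R\, e^{-\beta R^2} < 0$, so $\partial_\nu w(z) = \epsilon \partial_\nu v(z) < 0$, a contradiction. The main obstacle is the correct handling of the sign of $cM$ and the careful choice of the barrier exponent $\beta$ to dominate the lower order terms while keeping $Lv > 0$ throughout the annulus; once this bookkeeping is done, the invocation of the already-proved weak maximum principle for $c \le 0$ closes the argument cleanly.
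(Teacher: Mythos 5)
Your proof is correct and is essentially the same argument the paper relies on: the paper does not write out a proof but cites Theorem 3.5 of Gilbarg--Trudinger, whose proof is precisely the Hopf boundary-point construction you carry out (exponential barrier $v=e^{-\beta|x-y|^2}-e^{-\beta R^2}$ on the annulus, the weak maximum principle applied to $w=u+\epsilon v - M$, and the contradiction at $z$ between $\partial_\nu w(z)\ge 0$ and $Du(z)=0$, $\partial_\nu v(z)<0$). Your handling of the lower-order and $c$-terms (dominating $-2\beta\,\mathrm{tr}\,a - 2\beta\,b\cdot(x-y)+c$ by $4\beta^2\ellip R^2/4$ on the annulus, and using $c\le0$, $M\ge0$ so that $-cM\ge0$ and $-ce^{-\beta R^2}\ge0$) is accurate.
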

Compare Theorem 3.5, page 34 in \cite{GilTru}. The condition $\frac{c}{\lambda} < \infty$ is provided by our assumption that $c$ is bounded and the uniform ellipticity.
The maximum principles and their corollaries come from the fact, that a $C^2(\Om)$-smooth function has a negative definite Hesse-Matrix at a interior maximum, while the coefficient matrix of the second order part of the differential operator is positive definite. Therefore it is not suprising that the differential operator is dissipative, provided $c\le0$.

\begin{lemma}{Operator dissipative} \label{Dissipative}
\\ If $c \leq 0$ then the operator $(L,D(L))$ of \eqref{defG} with $D(L)$ as in \eqref{defDL} is dissipative. 
 Otherwise there exists an $\omega > 0$ such that the operator $L-\omega$ is dissipative.
\end{lemma}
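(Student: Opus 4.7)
The approach is to establish dissipativity via the standard pointwise duality criterion available on the sup-norm space $\Czero{\Omclo}$, combined with the maximum-principle ideas already invoked in Theorem \ref{WeakMax0}. It suffices to produce, for each $u \in D(L)$ with $u \neq 0$, a continuous linear functional $\ell$ on $\Czero{\Omclo}$ satisfying $\| \ell \| = \norm{u}{\infty}$, $\ell(u) = \norm{u}{\infty}^2$ and $\ell(Lu) \leq 0$. For such a $u$ I would pick $x_0 \in \Omclo$ with $|u(x_0)| = \norm{u}{\infty}$; since $u$ vanishes on $\partial \Om$ and $u \not\equiv 0$, necessarily $x_0 \in \Om$. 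Defining $\ell(v) := \norm{u}{\infty} \, \mathrm{sgn}(u(x_0)) \, v(x_0)$, i.e.\ a suitably rescaled signed Dirac mass at $x_0$, the first two duality requirements are immediate.

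To verify $\ell(Lu) \leq 0$, assume first $c \leq 0$ and, without loss of generality, $u(x_0) = \norm{u}{\infty} > 0$ (the case of an interior minimum, where $u(x_0) < 0$, follows symmetrically by applying the argument to $-u \in D(L)$). As $x_0$ is an interior maximum, $\pdx{i} u(x_0) = 0$ for every $i$, so the drift term $\sumfromto{1}{n} b_i(x_0) \pdx{i} u(x_0)$ vanishes. Moreover, the Hessian $D^2 u(x_0)$ is negative semidefinite. Symmetrising $(a_{ij}(x_0))$ to $\widetilde a_{ij} := \tfrac{1}{2}(a_{ij} + a_{ji})$, which is legitimate because $\pdx{i} \pdx{j} u = \pdx{j} \pdx{i} u$, yields a symmetric positive definite matrix by uniform ellipticity. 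The elementary fact that $\mathrm{tr}(PQ) \leq 0$ for symmetric positive semidefinite $P$ and symmetric negative semidefinite $Q$ then gives $\mysum{i,j}{n} a_{ij}(x_0) \pdx{i}\pdx{j} u(x_0) \leq 0$. Finally $c(x_0) u(x_0) \leq 0$ by the sign hypothesis on $c$ together with $u(x_0) > 0$. Summing the three contributions, $Lu(x_0) \leq 0$, and hence $\ell(Lu) = \norm{u}{\infty} Lu(x_0) \leq 0$.

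For the general case where $c$ is not everywhere $\leq 0$, I would set $\omega := \supr{\Om} c^+$, which is finite by the coefficient bound $|c| \leq \coeffbound$. The operator $L - \omega$ has the same domain $D(L)$ as $L$ and differs from it only in its zeroth-order coefficient, now replaced by $c - \omega \leq 0$. Hence the first part of the argument applies verbatim to $L - \omega$, which is therefore dissipative.

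The only non-routine ingredient in the proof is the sign of the second-order term at the extremum; once $(a_{ij})$ has been symmetrised this reduces to the elementary trace inequality for positive/negative semidefinite matrices, so I do not foresee any real obstacle. Everything else is a direct consequence of the first-order necessary conditions at an interior extremum together with the sign assumption on $c$.
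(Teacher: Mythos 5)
Your proposal is correct and follows essentially the same route as the paper: evaluate at an interior point where $|u|$ attains its maximum, pass to a signed Dirac mass there, use the vanishing of the gradient, the sign of the second-order term at the extremum, and the sign hypothesis on $c$, then shift by $\omega = \sup c$ in the general case. Your write-up is in fact a bit more careful than the paper's — you normalize the tangent functional so that $\ell(u) = \|u\|_\infty^2 = \|\ell\|^2$ rather than just $\|u\|_\infty$, and you explicitly symmetrize $(a_{ij})$ and invoke the trace inequality for semidefinite matrices where the paper informally asserts positive/negative definiteness — but these are refinements, not a different argument.
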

\begin{proof}
{ Case 1: $c \leq 0$ }
Recall the definition of dissipative: For each $u \in D(L)$ there exists an $u' \in \dualityset$ such that $\langle u', L u \rangle \leq 0$. For $u \equiv 0$ the condition is trivial. If $u$ is not constant zero, then by the boundary conditions $|u|$ attains its maximum in the interior. Assume first that the maximum is attained for a positive value at a point $x_0 \in \Om$. Choose $u':=\diracdelta{x_0}{\cdot}$. 
Then $ \dualitymap{u'}{u} = \dualitymap{\diracdelta{x_0}{\cdot}}{u} = \diracdelta{x_0}{u} = u(x_0) = \| u \| $. Note that at a maximum $\nabla u(x_0) = 0$, $tr(A (Hu))(x_0) \leq 0 $ since $A$ is positive definit and $H$ negative definit.  
$$\dualitymap{u'}{Lu} = ( tr(A (Hu)) + \sumfromto{1}{n} b_{i} \pdx{i} u + c u)(x_0) \leq c u \leq 0$$
If the maximum $\| u \|$ is attained for $u < 0$ consider $-u$. Then $\dualitymap{-\delta_{x_0}}{Lu} = 
\dualitymap{\delta_{x_0}}{L(-u)} \le 0$ \newline 
{Case 2: $c$ arbitrary:} Since $c$ is bounded we have $ \omega := \supr{x \in \Om}{\ c} < \infty$. Apply the proof of case 1 to $L' = L - \omega$.
\end{proof}
The property dissipative can be viewn as the local analogon of the maximum principle.

\begin{theorem}[Interior Maximum estimate]{ \ } \label{IntMaxEstimate} \\
Let $L$ be the differential operator \eqref{defG} with $c \leq 0$, $u \in C^0(\Omclo)\cap C^2(\Om)$, $f \in C^0(\Omclo)$. If $Lu=f$ in a bounded domain $\Om$ then:
$$ \supr{\Om}{|u|} \leq \supr{\partial \Om}{|u|}+\frac{C}{\ellip} \norm{f}{C^0(\Omclo)} $$
Moreover the constant $C$ depends monotone increasing on the bound of the coefficients $\coeffbound$ and decreasing on the ellipticity constant $\ellip$.
\end{theorem}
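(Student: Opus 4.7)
The plan is to prove this via the standard barrier/auxiliary-function technique combined with the weak maximum principle (Corollary \ref{WeakMax}).

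First, I would translate and rescale coordinates so that $\Omega \subset \{0 < x_1 < d\}$ for some $d > 0$; this is possible since $\Omega$ is bounded (take $d$ roughly equal to the diameter). Next, I would construct an auxiliary ``barrier'' function of the form
\begin{equation*}
v(x) = e^{\alpha d} - e^{\alpha x_1},
\end{equation*}
for a parameter $\alpha > 0$ to be chosen. Note $v \ge 0$ on $\Omclo$. Computing,
\begin{equation*}
Lv = -e^{\alpha x_1}\bigl(\alpha^2 a_{11}(x) + \alpha\, b_1(x)\bigr) + c(x)\,v.
\end{equation*}
Using uniform ellipticity $a_{11} \ge \ellip$ and the coefficient bound $|b_1| \le \coeffbound$, I would choose $\alpha = \coeffbound/\ellip + 1$. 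Then $\alpha^2 a_{11} + \alpha b_1 \ge \alpha(\alpha \ellip - \coeffbound) \ge \alpha \ellip \ge \ellip$. Since $c \le 0$ and $v \ge 0$, the term $c v$ is nonpositive, and since $e^{\alpha x_1} \ge 1$ on $\Omega$, this yields
\begin{equation*}
Lv \le -\ellip \quad \text{on } \Om.
\end{equation*}

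Set $M := \sup_{\partial \Om}|u|$ and $F := \|f\|_{C^0(\Omclo)}$, and consider the comparison function
\begin{equation*}
w := u - M - \frac{F}{\ellip}\, v.
\end{equation*}
Since $c \le 0$ and $M, v \ge 0$, we get $-cM \ge 0$, hence
\begin{equation*}
Lw = Lu - c M - \frac{F}{\ellip} Lv \ge f + F \ge 0 \quad \text{on } \Om.
\end{equation*}
On $\partial \Om$ we have $w \le |u| - M - 0 \le 0$, so $w^+ = 0$ there. By the weak maximum principle (Corollary \ref{WeakMax}), $\sup_\Om w \le \sup_{\partial \Om} w^+ = 0$, giving $u \le M + (F/\ellip)\, v \le M + (F/\ellip)\sup_\Om v$. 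Applying exactly the same argument to $-u$ (which also lies in the required regularity class and satisfies $L(-u) = -f$, with the same sup-norm bound on $f$), we get the symmetric lower bound. Combining,
\begin{equation*}
\sup_\Om |u| \le M + \frac{\sup_\Om v}{\ellip}\, F.
\end{equation*}
Finally, $\sup_\Om v \le e^{\alpha d} - 1 =: C$, which depends monotonically (increasingly in $\coeffbound$, decreasingly in $\ellip$) via the choice $\alpha = \coeffbound/\ellip + 1$ and via $d$ (the diameter of $\Om$).

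There is no serious obstacle in this argument once the barrier $v$ is in hand; the only mildly delicate point is verifying the signs carefully so that $c \le 0$ works \emph{in favour} of the estimate rather than against it (this is why the term $cM$ and the term $cv$ both drop harmlessly), and ensuring the choice of $\alpha$ depends only on the quantities allowed in the statement.
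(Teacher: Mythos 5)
Your argument is correct and is essentially the same as the paper's: the paper proves this estimate through Lemma \ref{SubSuperfunction}, which constructs the superfunction $v^+ = \sup|g| + (e^{\gamma d}-e^{\gamma x_1})\|f\|_{C^0}/\ellip$ (and the symmetric subfunction $v^-$) using exactly your exponential barrier and your choice of $\gamma\sim\coeffbound/\ellip$, and then applies the comparison principle to conclude $v^-\le u\le v^+$. Your phrasing via $w=u-M-(F/\ellip)v$ and the weak maximum principle is just the comparison step written out directly, and your monotonicity remark matches the paper's remark after Lemma \ref{SubSuperfunction}.
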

The proof is based on the maximum principle and the construction of a certain dominating function. This will be done in \ref{SubSuperfunction} below, for the monotone dependence see the remark after the proof of \ref{SubSuperfunction}.

\subsection{Estimate in the interior}
In this subsection we present certain a-priori estimates to a solution. We require $f \in \Czeroalpha{\Omclo}$ and discuss the existence of solutions in $\Ctwoalpha{\Om}$ or $\Ctwoalpha{\Omclo}$. Therefore we first cite a priori estimates on the solution in those spaces. Based on these estimates and the maximum principle we derive the existence and uniqueness of solutions in the next section.
The interior estimates depend on the distance to the boundary. Therefore we introduce the following distance-weighted norms:
Let $0 < \beta$, $\Om_0 \subset \Om$, $T \subset \partial \Om$.
\begin{explainText}
\begin{enumerate}
\item $\norm{u}{C^0(\Om_0)}^{(\beta)} := \supr{x \in \Om_0} (d(x,\partial \Om_0))^\beta \mod{u(x)} $
\item $\norm{u}{C^0(\Om_0 \cup T)}^{(\beta)}:=\supr{x \in \Om_0 \cup T} (d(x,\partial \Om_0 \setminus T))^\beta \mod{u(x)}$
\end{enumerate}
The weighted norms $\weightedNorm{u}{\Czeroalpha{\Om_0}}{(\beta)}$ are defined analougsly with $|u(x)|$ replaced by the lim sup of the $\alpha$-differential quotient.
Define $$\| u \|'_{\Ctwoalpha{\Om_0}} = \weightedNorm{u}{C^0(\Om_0)}{(0)}+\weightedNorm{Du}{C^0(\Om_0)}{(1)}+\weightedNorm{D^2u}{C^0(\Om_0)}{(2)}+\weightedNorm{D^2u}{\Czeroalpha{\Om_0}}{(2+\alpha)}$$ \newline 
For a boundary portion $T$ define: \newline
$$\| u \|'_{\Ctwoalpha{\Om_0 \cup T}} = \weightedNorm{u}{C^0(\Om_0 \cup T)}{(0)}+\weightedNorm{Du}{C^0(\Om_0 \cup T)}{(1)}+\weightedNorm{D^2u}{C^0(\Om_0 \cup T)}{(2)}+\weightedNorm{D^2u}{\Czeroalpha{\Om_0 \cup T}}{(2+\alpha)}$$
I.e. as nearer a point $x$ is to the boundary the weaker the value at this point counts.
Further remarks to this norm can be found in \cite{GilTru} on page 60.
\end{explainText} 

%

\begin{lemma}{Estimate in the interior - Schauder Estimate} \label{EllipEstimateInt} \newline
If $u \in C^0(\Omclo) \cap \Ctwoalpha{\Om}$ with $Lu=f$. Then 
\begin{eqnarray}
\weightedTotalNorm{} \leq C(n,\ellip,\coeffbound,\alpha)(\| u \|_{C^0(\Omclo)} + \weightedNorm{f}{C^0(\Om)}{(2)}+\weightedNorm{f}{C^{0,\alpha}(\Om)}{(2+\alpha)})
\label{EstimateInt}
\end{eqnarray} 
If $\Om$ has a boundary portion $T$ on $\{ x_n = 0\}$ and $u=0$ on $T$ then:
\begin{eqnarray}\weightedTotalNormBound{} \leq C (\norm{u}{C^0(\Omclo)}
+\weightedNorm{f}{C^0(\Om)}{(2)}+\weightedNorm{f}{C^{0,\alpha}(\Om)}{(2+\alpha)}) 
\label{EstimateBond}
\end{eqnarray} 
Moreover the constant depends monotone decreasing on $\ellip$ and monotone increasing on $\coeffbound$.
\end{lemma}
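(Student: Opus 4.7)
The plan is to follow the classical freezing-of-coefficients strategy of Gilbarg--Trudinger, Chapter 6, reducing the variable-coefficient Schauder estimate to the constant-coefficient estimate on balls. The key point is that the weighted norms $\|\cdot\|'$ are tailored precisely so that these local estimates patch together into a global bound whose constant is independent of $\Om$ and depends monotonically on $\ellip,\coeffbound,\alpha$.

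For $x_0 \in \Om$ set $d_0 := d(x_0,\partial\Om)$ and work on $B := B_{d_0/4}(x_0)\subset\Om$. Freezing the leading coefficients at $x_0$, I would rewrite $Lu=f$ in the form $L_0 u = \tilde f$, where $L_0 := \sum_{i,j} a_{ij}(x_0)\,\pdx{i}\pdx{j}$ and
\[
\tilde f \;=\; f \;-\; \sum_{i,j}\bigl(a_{ij}(x)-a_{ij}(x_0)\bigr)\pdx{i}\pdx{j} u \;-\; \sum_i b_i \pdx{i} u \;-\; c u .
\]
The classical constant-coefficient Schauder estimate for $L_0$ on $B$ with radius $R=d_0/4$ yields
\[
R^{2}\|D^2 u\|_{\Czerozero{B_{R/2}(x_0)}} + R^{2+\alpha}[D^2 u]_{\alpha;B_{R/2}(x_0)} \;\le\; C\bigl(\|u\|_{\Czerozero{B}} + R^{2}\|\tilde f\|_{\Czerozero{B}} + R^{2+\alpha}[\tilde f]_{\alpha;B}\bigr),
\]
with $C=C(n,\ellip,\coeffbound,\alpha)$. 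The Hölder continuity bound $|a_{ij}(x)-a_{ij}(x_0)|\le [a_{ij}]_\alpha R^\alpha$ on $B$ forces the oscillation contribution in $\tilde f$ to carry a factor $R^\alpha$. After multiplying by the appropriate power of $d_0$ and taking suprema over $x_0$, this factor combines with the weighted seminorm $\weightedNorm{D^2 u}{\Czeroalpha{\Om}}{(2+\alpha)}$, and for $R$ small enough (equivalently, by a Campanato-type dyadic iteration on sub-balls) the resulting term is absorbed into the left-hand side. The lower-order contributions $b_i\pdx{i} u$ and $cu$ are handled by the standard interpolation inequality for weighted Hölder norms (compare Lemma 6.32 of \cite{GilTru}), which lets $\weightedNorm{Du}{\Czerozero{\Om}}{(1)}$ and $\weightedNorm{D^2 u}{\Czerozero{\Om}}{(2)}$ be absorbed into $\weightedNorm{D^2 u}{\Czeroalpha{\Om}}{(2+\alpha)}$ modulo a $\|u\|_{\Czerozero{\Omclo}}$-term. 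This establishes \eqref{EstimateInt}.

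For the boundary portion $T\subset\{x_n=0\}$ with $u=0$ on $T$, I would repeat the argument on half-balls $B_{d_0/4}^{+}(x_0)$ with $x_0\in\Om\cup T$ and $d_0:=d(x_0,\partial\Om\setminus T)$. The half-ball Schauder estimate for $L_0$ with zero Dirichlet data on $\{x_n=0\}$ follows by odd reflection of $u$ across $\{x_n=0\}$, reducing it to the interior case on a full ball. The freezing, interpolation and absorption steps then transfer verbatim with $d(x,\partial\Om\setminus T)$ replacing $d(x,\partial\Om)$ in the weights, yielding \eqref{EstimateBond}. The monotone dependence of $C$ is traced through each step: $\ellip^{-1}$ enters via the constant-coefficient estimate (decreasing in $\ellip$), while $\coeffbound$ enters via that same estimate and via the bound $[a_{ij}]_\alpha\le \coeffbound$ in the oscillation term (increasing in $\coeffbound$).

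The main obstacle will be the absorption step: one has to verify that the oscillation term, after multiplication by $R^\alpha$, produces a small enough factor to be moved to the left-hand side without inflating the constant. This is the heart of the Schauder theory; it requires either a careful choice of radius or a Campanato-style dyadic iteration, and it is precisely where the Hölder exponent $\alpha>0$ is essential.
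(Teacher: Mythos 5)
Your proposal follows exactly the approach the paper itself takes, since the paper's proof is essentially a citation to Gilbarg--Trudinger Theorem 6.2 and Lemma 6.4, which it notes are built on the constant-coefficient estimate (Lemma 6.1) via freezing of coefficients, interpolation, and absorption; your sketch is a correct summary of that argument.

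One small imprecision worth flagging: the claim that the half-ball estimate for the frozen operator $L_0=\sum a_{ij}(x_0)\pdx{i}\pdx{j}$ with zero Dirichlet data on $\{x_n=0\}$ ``follows by odd reflection of $u$ across $\{x_n=0\}$'' is only literally true when the cross terms $a_{in}(x_0)$, $i<n$, vanish, since odd reflection changes the sign of $\pdx{i}\pdx{n}u$ for $i<n$ and so does not preserve the equation in general. The correct reduction first applies a linear change of coordinates that maps $\{x_n=0\}$ to itself and transforms $(a_{ij}(x_0))$ to the identity (e.g.\ a lower-triangular factorization), after which the equation is Poisson's equation on a half-ball and odd reflection or the explicit half-space kernel applies; this is precisely the content of Lemma 6.4 in Gilbarg--Trudinger, and it is also where the paper locates the monotone dependence of $C$ on $\ellip$ and $\coeffbound$ --- through the operator norm of that diagonalizing linear map and its inverse. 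With that adjustment your outline coincides with the proof cited in the paper.
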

\begin{proof}
The proof of the first estimate can be found in \cite{GilTru}, Theorem 6.2 on page 85. It is based on an estimate for operators with constant coefficient matrix and no lower-order terms, which can be found in \cite{GilTru}, Lemma 6.1, page 83.
The monotone depending of the constant $C$ is contained in the proof of the estimate for a constant coefficient matrix. It depends on the norm of the diffeomorphism which transforms the constant coefficient matrix to the idendity matrix.
The analouge estimate with boundary portion can be found in \cite{GilTru}, Lemma 6.4 , page 90.
\end{proof}

\begin{remark}
Since $\weightedNorm{f}{C^0(\Om)}{(2)}+\weightedNorm{f}{C^{0,\alpha}(\Om)}{(2+\alpha)} \le C \norm{f}{\Czeroalpha{\Omclo}}$ the right hand side of \eqref{EstimateInt} can be replaced by 
$\norm{u}{C^0(\Om)} + \norm{f}{\Czeroalpha{\Omclo}}$. The same for \eqref{EstimateBond}.
\end{remark}

\begin{corollary}{Estimate on precompact set} \label{EllipEstimateComp} \newline
Let $u$ as in \ref{EllipEstimateInt}, $\Om_0 \subset \subset \Om  $, then
$$\| u \|_{\Ctwoalpha{\Om_0}} \leq C (\norm{u}{C^0(\Omclo)} + \weightedNorm{f}{\Czeroalpha{\Om}}{(2+\alpha)}) $$
If $\Om$ has a boundary part on $\{ x_n = 0\}$. Set $T_0 = \partial \Om \cap \{ x_n = 0\}$. Let $\Om_0 \subset \Om$ with $T'=\partial \Om_0 \cap T_0$ and $T' \subset T  \subset T_0$ with dist$(\Om_0,\partial \Om \setminus T) >0$ then:
$$ \norm{u}{\Ctwoalpha{\Om_0 \cup T'}} \leq C (\norm{u}{C^0(\Omclo)} + \weightedNorm{f}{\Czeroalpha{\Om}}{(2+\alpha)})  $$
\end{corollary}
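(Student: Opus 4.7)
The proof I have in mind is essentially a direct passage from the interior Schauder estimate in the weighted norm (Lemma \ref{EllipEstimateInt}) to the ordinary $C^{2,\alpha}$-norm on a compactly contained subset, using that the distance to the boundary is uniformly bounded below on such a subset.

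\textbf{Step 1: Reduce the unweighted norm on $\Om_0$ to the weighted norm on $\Om$.} Set $\delta := \mathrm{dist}(\Om_0,\partial\Om) > 0$, which is strictly positive since $\Om_0 \subset\subset \Om$. For every $x \in \Om_0$ and every $\beta \geq 0$ one has $d(x,\partial\Om)^\beta \geq \delta^\beta$, hence for any function $v$:
$$
\sup_{x \in \Om_0} |v(x)| \;\leq\; \delta^{-\beta}\, \sup_{x \in \Om} d(x,\partial\Om)^\beta |v(x)| \;=\; \delta^{-\beta}\, \weightedNorm{v}{C^0(\Om)}{(\beta)}.
$$
Applying this with $\beta = 0,1,2$ to $u,Du,D^2u$ respectively, and with $\beta = 2+\alpha$ to the $\alpha$-difference quotient of $D^2u$, gives
$$
\norm{u}{\Ctwoalpha{\Om_0}} \;\leq\; C(\delta)\, \weightedTotalNorm{}.
$$

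\textbf{Step 2: Invoke the interior Schauder estimate.} Lemma \ref{EllipEstimateInt} bounds $\weightedTotalNorm{}$ by $C(n,\ellip,\coeffbound,\alpha)\bigl(\norm{u}{C^0(\Omclo)} + \weightedNorm{f}{C^0(\Om)}{(2)} + \weightedNorm{f}{\Czeroalpha{\Om}}{(2+\alpha)}\bigr)$. Since $\weightedNorm{f}{C^0(\Om)}{(2)} \le \weightedNorm{f}{\Czeroalpha{\Om}}{(2+\alpha)}$ (the Hölder weighted norm dominates the $C^0$ one up to a constant), the right-hand side is controlled by $\norm{u}{C^0(\Omclo)} + \weightedNorm{f}{\Czeroalpha{\Om}}{(2+\alpha)}$. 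Combining with Step 1 yields the first claim.

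\textbf{Step 3: Boundary portion case.} Here the weighted norms on $\Om \cup T$ use $d(x,\partial\Om \setminus T)$ in place of $d(x,\partial\Om)$. By hypothesis $\mathrm{dist}(\Om_0,\partial\Om \setminus T) > 0$, and since $T' \subset T$ with $T' = \partial\Om_0 \cap T_0$ one also has $\mathrm{dist}(T',\partial\Om \setminus T) > 0$ (otherwise a sequence in $\Om_0$ accumulating at a point of $T'$ would witness the distance to $\partial\Om \setminus T$ being zero, contradicting the hypothesis). Set $\delta' := \mathrm{dist}(\Om_0 \cup T',\partial\Om \setminus T) > 0$ and repeat Step 1 verbatim with $\Om$ replaced by $\Om \cup T$ and $\Om_0$ by $\Om_0 \cup T'$, then invoke the boundary version \eqref{EstimateBond} of Lemma \ref{EllipEstimateInt}.

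\textbf{Main obstacle.} Technically there is none: this is a routine un-weighting argument. The only point requiring care is the verification in Step 3 that $T'$ stays away from $\partial\Om \setminus T$, since the hypothesis is phrased in terms of $\Om_0$ rather than $\overline{\Om_0}$; this needs the implicit convention that the distance assumption is meant for the closure, or a short topological argument as sketched above.
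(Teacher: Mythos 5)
Your proposal is correct and matches the paper's approach exactly: the paper gives precisely this one-line justification, observing that on $\Om_0$ (resp. $\Om_0 \cup T'$) the distance $d(x,\partial\Om)$ (resp. $d(x,\partial\Om\setminus T)$) is bounded below by some $d_0>0$, so the weighted norms in Lemma~\ref{EllipEstimateInt} dominate the ordinary $C^{2,\alpha}$-norm on the compactly contained set. Your elaboration of the un-weighting step and the topological check in Step 3 are both sound, and the absorption of $\weightedNorm{f}{C^0(\Om)}{(2)}$ into a single $f$-term is the same implicit notational compression the paper itself makes in stating the corollary.
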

This follows by dist$(x, \partial \Om) \ge d_0 > 0$ or dist$(x, \partial \Om \setminus T)\ge d_0 > 0$ for $x \in \Om_0$ or $x \in \Om_0 \cup T'$ respectively. 

\begin{theorem}[Estimate on smooth domains]{ \ } \label{EstimateSmoothBoundary} \\
If $\partial \Om$ is $\Ctwoalphaf$-smooth and $u = 0 \ \text{on} \ \partial \Om$, $\LufOm$ then:
$$\| u \|_{\Ctwoalpha{\Omclo}} \leq C(\Om,\ellip,\coeffbound,\alpha) (\| u \|_{C^0(\Omclo)} + \|f\|_{\Czeroalpha{\Om}}) $$
\end{theorem}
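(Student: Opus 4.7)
The plan is to derive the global estimate from the interior and boundary Schauder estimates (Lemma \ref{EllipEstimateInt} and Corollary \ref{EllipEstimateComp}) via a finite covering argument that uses the $C^{2,\alpha}$ smoothness of $\partial\Om$ to flatten the boundary locally.

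First I would cover $\Omclo$ as follows. Choose an interior open set $\Om_0 \subset\subset \Om$ so that $\Omclo \setminus \Om_0$ is a thin collar of the boundary. By $C^{2,\alpha}$ smoothness of $\partial\Om$, for every $x_0 \in \partial\Om$ there exist an open neighbourhood $U_{x_0}$ in $\R^n$ and a $C^{2,\alpha}$ diffeomorphism $\psi_{x_0}:U_{x_0}\to V_{x_0}\subset \R^n$ sending $U_{x_0}\cap\Om$ onto an open set of the form $\Bplus{R}$ and $U_{x_0}\cap\partial\Om$ onto the boundary portion $\Bzero{R}$. By compactness of $\partial\Om$ a finite subfamily $\{U_k\}_{k=1}^N$ together with $\Om_0$ covers $\Omclo$.

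Next I would transfer the equation $Lu=f$ via each $\psi_k$. Setting $\tilde u := u\circ\psi_k^{-1}$, a direct computation shows $\tilde u$ satisfies $\tilde L \tilde u = \tilde f$ on $\Bplus{R}$ with $\tilde u = 0$ on $\Bzero{R}$, where the transformed second-order coefficients are $\tilde a_{ij} = (a_{kl}(\partial_k\psi_i)(\partial_l\psi_j))\circ\psi^{-1}$ and the first-order coefficients pick up additional contributions from the second derivatives of $\psi_k$. Because $\psi_k\in C^{2,\alpha}$, all coefficients of $\tilde L$ lie in $\Czeroalpha{\overline{\Bplus{R}}}$, $\tilde L$ is still uniformly elliptic with constants depending only on $\ellip,\coeffbound$ and $\|\psi_k\|_{C^{2,\alpha}}$, and $\tilde f \in\Czeroalpha{\Bplus{R}}$. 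Applying the boundary Schauder estimate \eqref{EstimateBond} of Lemma \ref{EllipEstimateInt} to $\tilde u$ on $\Bplus{R}$ with boundary portion $\Bzero{R}$, and noting that on the fixed relatively compact piece $\overline{\Bplus{R/2}}\cup\Bzero{R/2}$ the distance weights are bounded above and below, we obtain
\begin{equation*}
\|\tilde u\|_{\Ctwoalpha{\overline{\Bplus{R/2}}\cup\Bzero{R/2}}} \le C\bigl(\|\tilde u\|_{C^0(\overline{\Bplus{R}})} + \|\tilde f\|_{\Czeroalpha{\Bplus{R}}}\bigr).
\end{equation*}
Pulling back through $\psi_k$ (which preserves $C^{2,\alpha}$-regularity with a constant depending on $\|\psi_k\|_{C^{2,\alpha}}$) yields, on some open neighbourhood $W_k\subset U_k$ of $\psi_k^{-1}(\overline{\Bplus{R/2}}\cup\Bzero{R/2})$,
\begin{equation*}
\|u\|_{\Ctwoalpha{\overline{W_k\cap\Om}}} \le C_k\bigl(\|u\|_{C^0(\Omclo)} + \|f\|_{\Czeroalpha{\Om}}\bigr).
\end{equation*}

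Finally I would choose the $W_k$ so that together with $\Om_0$ they still cover $\Omclo$; applying Corollary \ref{EllipEstimateComp} to $\Om_0\subset\subset\Om$ produces an analogous bound on $\overline{\Om_0}$. Summing the finitely many local $C^{2,\alpha}$ bounds (using that the $C^{2,\alpha}(\Omclo)$ norm is controlled by the maximum of the $C^{2,\alpha}$ norms on a finite open cover, up to a constant depending on the cover) delivers the desired global estimate, with the final constant $C$ depending on $\Om$ through $N$ and the diffeomorphisms $\psi_k$, and on $\ellip,\coeffbound,\alpha$ through the hypotheses of Lemma \ref{EllipEstimateInt}.

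The main obstacle is the transformation step: one must check carefully that flattening the boundary by a $C^{2,\alpha}$ diffeomorphism preserves uniform ellipticity and keeps all coefficients of the transformed operator in $\Czeroalpha$ with controllable norms; this is exactly why $C^{2,\alpha}$ (and not merely $C^2$) smoothness of $\partial\Om$ is required, and it is also the reason, hinted at in the introduction, that the authors later need $C^{4,\alpha}$ boundaries when constructing the embedding operator.
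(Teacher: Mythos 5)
Your proposal is correct and follows essentially the same route as the proof the paper defers to (Gilbarg--Trudinger, Theorem 6.6): cover $\Omclo$ by an interior region plus finitely many boundary charts, flatten each boundary patch by a $C^{2,\alpha}$ diffeomorphism (which keeps the transformed coefficients in $\Czeroalpha{}$ and preserves uniform ellipticity up to constants), invoke the interior and boundary Schauder estimates \eqref{EstimateInt}, \eqref{EstimateBond} via Corollary \ref{EllipEstimateComp}, pull back, and combine over the finite cover. The paper's text simply cites this proof rather than reproducing it, and your sketch fills in exactly the argument being cited.
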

The proof can be found in \cite{GilTru} page 93, Theorem 6.6.

For locally smooth boundaries we have the following local estimate:
\begin{lemma}{Local estimate on smooth boundary portion} \label{EstimateLocalSmoothBoundary}
\\ Let $\Om$ be a domain with $\Ctwoalphaf$-smooth open (w.r.t the trace topology) boundary portion $T$, $\boundf \in \Ctwoalpha{\Omclo}$, $u \in \Ctwoalpha{\Om \cup T}$ with $\LufOm$, $u = \boundf \ \text{on} \ T$. Then for each $x_0$ and each ball $B:=B_{\delta}(x_0)$ with $\delta < dist(x_0,\partial \Om \setminus T)$:
$$ \norm{u}{\Ctwoalpha{B \cap \clo{\Om} }} \leq C (\norm{u}{C^0(\Omclo)} + \norm{\boundf}{\Ctwoalpha{\Omclo}} + \norm{f}{\Czeroalpha{\Om}}) $$
\end{lemma}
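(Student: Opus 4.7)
The plan is to reduce Lemma \ref{EstimateLocalSmoothBoundary} to the boundary-portion Schauder estimate of Corollary \ref{EllipEstimateComp} via two standard normalisations: subtracting off the boundary data and then flattening the boundary.

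First I would replace $u$ by $v := u - \boundf$. Since $\boundf \in \Ctwoalpha{\Omclo}$, the function $v$ again lies in $\Ctwoalpha{\Om \cup T}$, vanishes on $T$, and satisfies $L v = f - L\boundf =: \tilde f$ with $\norm{\tilde f}{\Czeroalpha{\Om}} \leq \norm{f}{\Czeroalpha{\Om}} + C(n,\coeffbound)\norm{\boundf}{\Ctwoalpha{\Omclo}}$. Any bound of the claimed form for $v$ will yield the bound for $u$ by the triangle inequality, since $\norm{u}{\Ctwoalpha{B\cap\Omclo}} \leq \norm{v}{\Ctwoalpha{B\cap\Omclo}} + \norm{\boundf}{\Ctwoalpha{\Omclo}}$.

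Second, because $T$ is a $\Ctwoalphaf$-smooth open portion of $\partial\Om$, around every $y_0 \in T$ there exists a $\Ctwoalphaf$-diffeomorphism $\Phi$ from a neighbourhood of $y_0$ in $\Omclo$ onto a relative neighbourhood of $0$ in the closed half-space $\{x_n \geq 0\}$, mapping the portion of $T$ contained in this neighbourhood into $\{x_n = 0\}$. Pushing $v$ forward by $\Phi$ gives $\tilde v := v \circ \Phi^{-1}$, vanishing on a flat boundary portion; a direct chain-rule computation shows that $\tilde v$ solves $\tilde L \tilde v = \tilde f \circ \Phi^{-1}$, where $\tilde L$ is again an operator of the form \eqref{defG} with $\Czeroalphaf$-coefficients, uniformly elliptic and bounded with constants depending only on $\ellip$, $\coeffbound$ and the $\Ctwoalphaf$-norm of $\Phi$.

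Now for the given $x_0$ and $\delta < \text{dist}(x_0,\partial\Om\setminus T)$, either $B_\delta(x_0) \subset \Om$, in which case the interior half of Corollary \ref{EllipEstimateComp} applies directly to $v$, or $B_\delta(x_0)$ meets $T$. In the latter case I would cover the compact set $B_\delta(x_0)\cap\Omclo$ by finitely many charts of the preceding paragraph, arranged so that in each chart the image contains a half-ball together with an adjacent flat portion; the boundary-portion half of Corollary \ref{EllipEstimateComp} then applies to $\tilde v$ on that half-ball. Summing the resulting local $\Ctwoalphaf$-estimates, pulling everything back via $\Phi$, and re-absorbing the contributions from $\boundf$, gives the desired inequality.

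The main obstacle is the bookkeeping of how the final constant depends on the chart data: one must check that after straightening the boundary $\tilde L$ still satisfies the ellipticity and boundedness conditions (1)--(2) of the introduction with constants controlled by $\ellip$, $\coeffbound$ and the local geometry of $T$, and that the weighted $\Ctwoalphaf$-norm on the half-ball used by Corollary \ref{EllipEstimateComp} is comparable with the unweighted norm $\norm{\cdot}{\Ctwoalpha{B\cap\Omclo}}$ after pulling back. Both steps are routine consequences of the chain rule together with $\Phi \in \Ctwoalphaf$, but they are where the constant $C$ acquires its dependence on $x_0$, $\delta$, and the local geometry of $T$.
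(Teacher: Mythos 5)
The paper does not actually prove this lemma: as with \ref{EllipEstimateInt} and \ref{EstimateSmoothBoundary}, the statement is implicitly delegated to Gilbarg--Trudinger (Corollary 6.7), and the text following the lemma is only a remark on how $\boundf$ enters. Your reconstruction is precisely the standard argument from that source --- subtract $\boundf$ to reduce to zero boundary data on $T$, flatten $T$ locally with a $\Ctwoalphaf$-chart $\Phi$, apply the boundary-portion case of Corollary \ref{EllipEstimateComp} in each chart, cover $\overline{B}\cap\Omclo$ by finitely many such charts together with interior balls, and pull back --- and it is sound. Two minor points you should make explicit when writing it out. First, the estimate $\norm{L\boundf}{\Czeroalpha{\Om}}\leq C\,\norm{\boundf}{\Ctwoalpha{\Omclo}}$ really requires the $\Czeroalphaf$-norms of $a_{ij},b_i,c$, not only the sup bound $\coeffbound$, since one must Hölder-bound the products $a_{ij}\pdx{i}\pdx{j}\boundf$ etc.; the paper itself is loose about this (writing $C(n,\ellip,\coeffbound,\alpha)$ for Schauder constants), so you are consistent with its conventions, but the dependence is worth recording. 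Second, the covering argument should be carried out for the compact set $\overline{B_\delta(x_0)}\cap\Omclo$, and one should note that the hypothesis $\delta<\text{dist}(x_0,\partial\Om\setminus T)$ guarantees $\overline{B_\delta(x_0)}\cap\partial\Om\subset T$, so every boundary point met by the ball admits a flattening chart staying inside $T$. Neither is a gap, just bookkeeping you flagged in your last paragraph.
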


\begin{remark}
It is important to note, that the norm estimate depends on the $\Ctwoalpha{\Omclo}$-norm of the boundary function at the boundary portion $T$ only and not on the boundary values of the rest of the boundary. 
Note that in \ref{EstimateLocalSmoothBoundary} the boundary function is assumed to be $\Ctwoalphaf$-smooth in the whole domain. For $\Ctwoalphaf$-smooth boundaries there exists always a $\Ctwoalphaf$-smooth continuation of a function which is $\Ctwoalphaf$ on the boundary to a $\Ctwoalphaf$-smooth function in the interior (see \ref{TraceCk}).
\end{remark}

%
%
%
%

\subsection{Perron method, Existence of Solutions}

\begin{theorem}[Solution on ball]{ \ } \label{SolBall}
\\ Let $\Om = \BR$ for some $R>0$, $\boundf \in \Ctwoalpha{\partial \Om}$, $f \in \Czeroalpha{\Omclo}$ then the problem:
\begin{eqnarray} 
 Lu = f \ \text{in}  \ \Om \label{DirBallNonhom} \\
 u = \boundf \ \text{on} \ \partial \Om \nonumber
\end{eqnarray} 
has a unique solution in $\Ctwoalpha{\Omclo}$.
\end{theorem}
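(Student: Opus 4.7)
The plan is to reduce to zero boundary data and then apply the method of continuity between $\Delta$ and $L$, using the global Schauder estimate of Theorem \ref{EstimateSmoothBoundary} together with the interior maximum estimate of Theorem \ref{IntMaxEstimate} as the key a priori bound. Fredholm theory will then handle the absence of a sign condition on $c$.

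First, since $\partial \BR$ is smooth, the boundary datum $\boundf \in \Ctwoalpha{\partial \Om}$ admits an extension $\tilde{\boundf} \in \Ctwoalpha{\Omclo}$ (cf.\ the remark after Lemma \ref{EstimateLocalSmoothBoundary}), so setting $v := u - \tilde{\boundf}$ turns the problem into $Lv = F := f - L\tilde{\boundf} \in \Czeroalpha{\Omclo}$ in $\Om$ with $v=0$ on $\partial\Om$. To guarantee a nonpositive zero-order coefficient I pick $\omega \geq \sup_{\Om} c$ and put $\tilde{L} := L - \omega$.

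I would then apply the method of continuity along the family $L_t := (1-t)(\Delta - \omega) + t\tilde{L}$, $t \in [0,1]$, acting between the Banach spaces $X := \{w \in \Ctwoalpha{\Omclo} : w=0 \text{ on } \partial\Om\}$ and $Y := \Czeroalpha{\Omclo}$. Each $L_t$ is uniformly elliptic with $t$-uniform coefficient bounds and has nonpositive zero-order coefficient, so combining Theorem \ref{IntMaxEstimate} (to absorb the $C^0$ term) with Theorem \ref{EstimateSmoothBoundary} yields the uniform a priori estimate $\|w\|_{\Ctwoalpha{\Omclo}} \leq C \|L_t w\|_{\Czeroalpha{\Omclo}}$ for all $w \in X$. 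At $t = 0$ the operator $\Delta - \omega : X \to Y$ is bijective (existence by Perron's method with spherical barriers, promoted to $\Ctwoalpha{\Omclo}$ by Theorem \ref{EstimateSmoothBoundary}; uniqueness by the weak maximum principle), and the standard continuity argument (openness via Banach fixed point using the a priori estimate, closedness via the same estimate and completeness) propagates bijectivity to $\tilde{L} : X \to Y$ at $t=1$.

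Finally, $Lv = F$ for $v \in X$ is equivalent to $v + \omega \tilde{L}^{-1} v = \tilde{L}^{-1} F$; because $\tilde{L}^{-1}$ factors through the compact embedding $\Ctwoalpha{\Omclo} \hookrightarrow \Czeroalpha{\Omclo}$, the operator $\omega \tilde{L}^{-1} : Y \to Y$ is compact, and the Fredholm alternative reduces both existence and uniqueness to showing that $Lw = 0$ with $w \in X$ forces $w = 0$. I expect this injectivity to be the main obstacle: for $c \leq 0$ it is immediate from the strong maximum principle (Theorem \ref{StrongMax}), but for general $c$ one has to rule out $0$ from the Dirichlet spectrum of $L$ on the ball, typically by a Fredholm/eigenvalue argument along the lines of Gilbarg-Trudinger. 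Once this is settled, $u := v + \tilde{\boundf}$ is the desired unique $\Ctwoalpha{\Omclo}$-solution.
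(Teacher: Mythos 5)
Your overall route (reduce to zero boundary data via a $\Ctwoalphaf$-extension of $\boundf$, then apply the method of continuity between $\Delta$ and $L$ with the Schauder estimate \ref{EstimateSmoothBoundary} and the sup-norm bound \ref{IntMaxEstimate} as the a priori inputs) is the same as the paper's. The paper runs the continuity argument directly on the family $L_t = tL + (1-t)\Delta$ and stops there, because it implicitly works under $c \le 0$ --- the hypothesis underlying every other step of the Perron machinery in this section (Lemma \ref{SubSuperfunction} is stated with $c\le0$, the uniqueness corollary and Theorem \ref{IntMaxEstimate} require it, and \ref{SolBall} is later only applied to $L - \lambda$ with $\lambda > \sup_\Om c$, so effectively with nonpositive zero-order coefficient). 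Under that reading your shift to $\tilde L := L-\omega$ and the subsequent Fredholm step are unnecessary: the continuity argument lands directly on $L$, and uniqueness is immediate from the weak maximum principle.

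The ``main obstacle'' you flag at the end is in fact a genuine one, and you are right not to paper over it: if $c$ is allowed to be an arbitrary bounded coefficient, the injectivity the Fredholm alternative would require can actually fail. Taking $c$ equal to a Dirichlet eigenvalue of the principal part on $\BR$ yields a nontrivial element of the kernel in $X$, so both uniqueness and (generically) existence break, and Theorem \ref{SolBall} is simply false as written for general $c$. This is not a hole you can fill by a cleverer argument; it is a missing hypothesis on $c$. Once you impose $c\le0$ (as the paper tacitly does), the Fredholm detour is moot, and your method-of-continuity core coincides with the paper's proof. The one other minor divergence --- you take the homotopy $(1-t)(\Delta-\omega)+t(L-\omega)$ and must separately justify bijectivity of $\Delta-\omega$ at $t=0$, whereas the paper starts from $\Delta$ itself and cites GT Corollary 4.14 for that endpoint --- is immaterial once the shift is dropped.
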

\begin{proof}
By \ref{TraceCk} the function $g$ can be continued to a $\Ctwoalphaf$-smooth function (also denoted by) $g$ in the interior.

Now substitue $u$ by $u - g$. Then the problem \eqref{DirBallNonhom} is equivalent to:
\begin{eqnarray} 
 Lu = f - L g \  \text{in}  \ \Om \label{DirBallHom} \\
 u = 0 \ \text{on} \ \partial \Om \nonumber
\end{eqnarray} 
i.e we consider the problem for zero boundary values. 
The solution to \ref{DirBallNonhom} is then given by $u_0 = u + \boundf$, indeed $L u_0 = Lu+L g = (f - L g) + L g =f $. By the uniqueness of the solution of \eqref{DirBallNonhom} $u_0$ is the only solution and therefore independent of the concrete choice of the continuation of $g$. 

Consider the two Banach spaces $$X_1 = \{u \in \Ctwoalpha{\Omclo} \ | \ u = 0 \ \text{on} \ \partial \Om \}$$ $$X_2 = \{u \in \Czeroalpha{\Omclo} \ | \ u = 0 \ \text{on} \ \partial \Om \} $$
Then the operator $\Delta$ is bijective from $X_1 \rightarrow X_2$ by Corollary 4.14, page 66 in \cite{GilTru}.
Consider the family of operators $L_t = t L + (1-t) \Delta$. Then the coefficient matrix is given by
$a^t_{ij} (x) = (1-t)\delta_{ij}+t a_{ij}(x)$.
Thus: $$\sumi \sumj a^t_{ij}(x) \xi_i \xi_j = (1-t) \norm{\xi}{}^2 + t \ellip \norm{\xi}{}^2 > min \{1,\ellip \} \norm{\xi}{}^2$$
Thus the bound of the coefficients of $L_t$ is given by $max \{ 1, \coeffbound \}$.
Choosing $\tilde{\ellip} = min \{1,\ellip \}$ and $\tilde{\coeffbound} = max \{1 , \coeffbound \} $
there exists by the estimate \ref{EstimateSmoothBoundary} and the monotone dependence of the constant in (\ref{EllipEstimateInt}) a constant $C_1$, such that for every $t \in [0,1]$ and $u_t \in X_1$,$f \in X_2$ with $L_t u_t = f \ \text{in} \ \Om$:

$$ \norm{u_t}{\Ctwoalpha{\Omclo}} \le C_1(\tilde{\ellip},\tilde{\coeffbound})(\norm{u_t}{C^0(\Omclo)} + \norm{f}{\Czeroalpha{\Omclo}}) $$

Furthermore by \ref{IntMaxEstimate} there exists a constant $C_2$ with 
$$\norm{u_t}{C^0(\Omclo)} \le \frac{C_2}{\tilde{\ellip}} \norm{f}{C^0(\Omclo)} $$
thus there exists one constant $C_3 < \infty$ such that for all $t \in [0,1]$:
$$\norm{u_t}{\Ctwoalpha{\Omclo}} \le C_3 \norm{f}{\Czeroalpha{\Omclo}} $$
i.e. $\norm{L_t u}{X_2} = \norm{f}{X_2} \ge \frac{1}{C3} \norm{u_t}{X_1}$.
Thus the family of operators $L_t$ is uniformly invertible. 
Since $L_0 = \Delta$ is surjective it follows by the Method of continuity (\ref{TheoMethodCont}), that $L_1 = L$ is surjective.
\end{proof}

\begin{theorem}[Solution on a smooth boundary portion for Balls]{ \ } \label{BallPart}
\\
Let $B=\BR$. Let $\boundf \in C^0(\partial B) \cap \Ctwoalpha{T}$ for some open (w.r.t to the trace topology) possibly empty  $T \subset \partial B$.  Then there exists a unique solution of \eqref{DirProblem} in $C^0(\clo{B}) \cap \Ctwoalpha{B \cup T}$.
\end{theorem}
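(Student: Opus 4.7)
The plan is to reduce to the already-established $\Ctwoalpha{\clo B}$-solvability for Hölder data (Theorem \ref{SolBall}) by approximating the continuous boundary datum $\boundf$ by $\Ctwoalphaf$ functions, and then to promote the regularity up to $T$ via the local boundary Schauder estimate (Lemma \ref{EstimateLocalSmoothBoundary}). Uniqueness in $C^0(\clo B)\cap\Ctwo{B}$ follows from the weak maximum principle (Corollary \ref{WeakMax}), applied if necessary to the shifted operator as in Lemma \ref{Dissipative}.

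The first step is to construct a sequence $g_k\in\Ctwoalpha{\partial B}$ converging uniformly to $\boundf$ on $\partial B$ and arranged so that, for every compact $K\subset T$, one has $g_k\equiv\boundf$ on $K$ for all sufficiently large $k$. Given such a $K$, pick a cutoff $\phi\in C^\infty(\partial B)$ with $\phi\equiv 1$ on $K$ and $\supp\phi$ compactly contained in $T$, and set $g_k:=\phi\boundf+(1-\phi)h_k$, where $h_k\in\Ctwoalpha{\partial B}$ is any uniform approximation of $\boundf$ (obtained, for instance, by mollifying after extending $\boundf$ continuously to a neighbourhood of $\partial B$). Since $\boundf\in\Ctwoalpha{\supp\phi}$ by hypothesis and $\phi$ is smooth with compact support in $T$, the product $\phi\boundf$ extends by zero to an element of $\Ctwoalpha{\partial B}$, so $g_k\in\Ctwoalpha{\partial B}$, $g_k\to\boundf$ uniformly, and $g_k=\boundf$ on $K$.

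By Theorem \ref{SolBall}, each pair $(f,g_k)$ produces a solution $u_k\in\Ctwoalpha{\clo B}$ with $Lu_k=f$ in $B$ and $u_k=g_k$ on $\partial B$. Applying Theorem \ref{IntMaxEstimate} to the differences yields $\norm{u_k-u_j}{C^0(\clo B)}\le\norm{g_k-g_j}{C^0(\partial B)}\to 0$, so $u_k\to u$ uniformly on $\clo B$ with $u\in C^0(\clo B)$ and $u=\boundf$ on $\partial B$. Corollary \ref{EllipEstimateComp} produces uniform bounds $\norm{u_k}{\Ctwoalpha{\Om_0}}\le C(\Om_0)$ on every $\Om_0\subset\subset B$, and a diagonal Arzelà--Ascoli argument upgrades the $C^0$-convergence to $\Ctwoalphaf$-convergence on compact subsets of $B$, giving $u\in\Ctwoalpha{B}$ and $Lu=f$ in $B$. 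For regularity on $T$, fix $x_0\in T$ and choose $\rho>0$ so small that $K:=\overline{B_{2\rho}(x_0)}\cap\partial B\subset T$. By construction $g_k\equiv\boundf$ on $K$ for large $k$, and the remark following Lemma \ref{EstimateLocalSmoothBoundary} makes the constants in the local boundary Schauder estimate applied to $u_k$ on $B_\rho(x_0)\cap\clo B$ depend only on $\norm{\boundf}{\Ctwoalpha{K}}$, $\norm{u_k}{C^0(\clo B)}$ and $\norm{f}{\Czeroalpha{B}}$, hence uniform in $k$. Arzelà--Ascoli combined with the uniqueness of the limit then yields $u\in\Ctwoalpha{B_\rho(x_0)\cap\clo B}$, and since $x_0\in T$ was arbitrary, $u\in\Ctwoalpha{B\cup T}$.

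I expect the main obstacle to be the construction of the approximating sequence: one must simultaneously approximate a merely continuous boundary datum uniformly by $\Ctwoalphaf$ functions and preserve its $\Ctwoalphaf$ structure on an exhaustion of $T$ by compact subsets, so that the local boundary Schauder estimate converts a $k$-uniform bound on $g_k$ near $T$ into a $k$-uniform $\Ctwoalphaf$-bound on $u_k$ near $T$ even though the global $\Ctwoalphaf$-norms of $g_k$ are allowed to blow up.
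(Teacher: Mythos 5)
Your proposal is correct and follows the same overall strategy as the paper: approximate the boundary datum by $\Ctwoalphaf$ functions, solve via Theorem \ref{SolBall}, pass to a uniform limit using the maximum principle, upgrade interior regularity with Corollary \ref{EllipEstimateComp}, and obtain regularity up to $T$ from the local boundary Schauder estimate applied to a sequence with $k$-uniform $\Ctwoalphaf$ control near $T$. The only genuine difference is in how the approximating boundary sequence is built. The paper extends $g$ radially to $B_{2R}(0)$, mollifies the extension by convolution, and invokes Lemma \ref{ConvCkEstimate} to get $\norm{g_k}{\Ctwoalpha{\clo{B_1}}}\le C\norm{g}{\Ctwoalpha{T}}$ uniformly in $k$ on balls compactly contained in $T$; the $g_k$ then differ from $g$ on $T$ but are uniformly $\Ctwoalphaf$-bounded there. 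You instead glue with a cutoff, $g_k=\phi g+(1-\phi)h_k$, so that $g_k\equiv g$ \emph{exactly} on a compact $K\subset T$ for large $k$; this sidesteps the convolution estimate and makes the uniform bound trivial, at the cost of requiring either a diagonal argument over an exhaustion of $T$ by compacts or, more simply, the observation that the limit $u$ is the same for every admissible approximating sequence (by uniqueness), so one may choose the cutoff $\phi$ after fixing the point $x_0\in T$ at which regularity is to be checked. You have identified the right obstacle; to close the remaining small gap in the exposition you should make explicit that the $x_0$-dependent choice of cutoff is harmless precisely because of uniqueness of the limit.
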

\begin{proof}
\def\Tray{\tilde{T}}
Define $B_2 = B_{2R}(0)$. We continue the function by a radial extension. Choose a cutoff $\eta$ for $[R-\frac{R}{4},R+\frac{R}{4}]$ in $[R-\frac{R}{2},R+\frac{R}{2}]$.
Define $g_1(x)$ := $ \eta(\mod{x}) g(R\frac{x}{\mod{x}}) $. \\
Denote by $\Tray = [0,2R] T = \{ x \ | \ R \frac{x}{\mod{x}} \in T, \ \mod{x}\le 2R \}$. Then $g_1 \in C^0(B_2) \cup \Ctwoalpha{\Tray}$.
Choose a standard dirac sequence $\varphi_k : \R^n \rightarrow \R$ with compact support in $B_{\frac{1}{k}}$ and define $g_k := \varphi_k \ast g_1$. 
Therefore:
$$ g_k \in \Ctwoalpha{\clo{B}} $$
$$ g_k \limk g_1 \text{w.r.t } \ \norm{\cdot}{C(B)}$$

By the previous theorem \ref{SolBall} there exist $u_k \in \Ctwoalpha{\clo{B}}$ with:
\begin{eqnarray}
 L u_k = f \  \text{in}  \ B \\
 u_k = \boundf_k \ \text{on} \ \partial B
\end{eqnarray} \label{SolBound}

By the 3rd estimate in \ref{WeakMax} the convergence of the boundary values implies that the sequence $(u_k)_k$ is a Cauchy sequence w.r.t to the sup-Norm in $\clo{B}$. Hence there exists $u \in C^0(\clo{B})$ with $u_k \limk u \ \text{w.r.t} \ \norm{\cdot}{C^0(\clo{B})}$. 
Thus $ u = g \ \text{on} \ \partial B$.
Since for $m,n \in \N$ we have $L u_m = L u_n = f$, we have for an arbitrary $x_0 \in B$ and some ball $B'(x_0) \subset \subset B$ by the estimate for compact subsets \eqref{EllipEstimateComp} 

$$\norm{u_m-u_n}{\Ctwoalpha{B'}} \leq C (\norm{ u_m-u_n}{C^0(\clo{B})} + \norm{L u_m - L u_n}{\Czeroalpha{\clo{B}}})$$
Hence $(u_k)_k$ is a Cauchy sequence w.r.t $\norm{\cdot}{\Ctwoalpha{B'}}$, which converges to $u$.
Thus $ L u = f \ \text{in} \ B'$ and $u \in \Ctwoalpha{B'}$.
Since $x_0$ was arbitrary we have $u \in C^0(\Omclo) \cap \Ctwoalpha{\Om}$ and $u$ solves \eqref{DirBallNonhom}.
%

%
\newcommand{\wz}{\omega_0}


\def\bPart{\Gamma}
Now let $x_0$ be any point in $T$. Since $T$ is open there exists $\delta > 0$ with $B_\delta(x_0) 
\cap T \subset \subset T$ and $B_\delta(x_0) \subset \subset \Tray$.
Denote by $B_2 = B_{\half{\delta}}(x_0) \cap B$, $B_1 = B_\delta(x_0) \cap B$, $T_2 := \partial B_2 \cap \partial B$.
By \ref{ConvCkEstimate} we have:
$$ \norm{g_k}{\Ctwoalpha{\clo{B_1}}} \le C \norm{g}{\Ctwoalpha{T}} $$
with a constant independent of $k$.
Now consider the Dirichlet problem in the domain $B_1$.
The boundary of this domain consists of the part $\bPart_1 = B_1 \cap T$ and $ \bPart_2 = \partial B_1 \cap \partial B_\delta(x_0)$.
The $u_k \in \Ctwoalpha{\clo{B}}$ from \ref{SolBound} fulfil: 
$$ L u_k = f \ \text{in} \ B_1 $$
$$ u_k = g_k \ \text{on} \ \bPart_1 $$
$$ u_k = u_k \ \text{on} \ \bPart_2 $$
The boundary portion $T_2$ of $B_2$ is compactly contained in $T_1$ therefore by the local estimate for compact boundary portions \ref{EstimateLocalSmoothBoundary} it holds by the uniform bound for the $\Ctwoalphaf$-norms of $g_k$ on $B_1$:
$$ \supr{k}{\norm{u_k}{\Ctwoalpha{B_2 \cup T_2}}} \le C(\norm{u_k}{C^0(\clo{B_1})} + \norm{g_k}{\Ctwoalpha{\clo{B_1}}}+\norm{f}{\Czeroalpha{\clo{B_1}}}) < \infty  $$
Therefore the functions $u_k$ are precompact in $\Ctwoalpha{B_2 \cup T_2}$ and it exists a subsequence converging to $u$ in $\Ctwo{B_2 \cup T_2}$. The $\Ctwof$-limit of a uniformly bounded $\Ctwoalphaf$-sequence is again $\Ctwoalphaf$-smooth (see \ref{LimitPreCompact}) hence $u \in \Ctwoalpha{B_2 \cup T_2}$. Hence the solution is $\Ctwoalphaf$-smooth in $x_0$ which was choosen arbitrarily from $T$.
Thus $u \in \Ctwoalpha{B \cup T}$.
\end{proof}

\begin{definition}{Subharmonic and Superharmonic functions} \label{Sub}
\\Let $L$ be the elliptic differential operator \eqref{defG}, $f \in C^0(\Om)$. A function $u \in C^0(\Om)$ is called subharmonic w.r.t to $(L,f)$ if for every ball $B \subset \subset \Om$, $v$ with $Lv =f \ \text{in} \ B$ and $ u \leq v$ on $\partial B$ it holds $u \leq v$ in $B$. A superharmonic function is defined in the analogous way.  Shortly writing:
\begin{enumerate}
\item Subharmonic: $Lv =f$, $u \leq v$ on $\partial B \Rightarrow u \leq v \ \ \text{in}  \ B$
\item Superharmonic: $Lv =f$, $u \geq v$ on $\partial B \Rightarrow u \geq v \ \ \text{in}  \ B$
\end{enumerate}
\end{definition}

\begin{definition}(Subfunction) { \ } \\
Let $g \in C^0(\Omclo)$, $L$,$f$ as above. A function $u \in C^0(\Omclo)$ is called subfunction to $(L,f,g)$ if $u$ is subharmonic and $u \leq \boundf \ \text{on} \ \partial \Om$.
$S_\boundf$ denotes the set of all subfunctions.
A superfunction is defined analougsly.
\end{definition}

\begin{definition}{Harmonic lifting} \label{HarmonicLifting}
\\Let $\Om$ be a bounded domain, $B=\BR \subset \Om$, $R>0$, $u \in C^0(\clo{\Om})$,$f \in \Czeroalpha{\clo{\Om}}$ , define the \textit{harmonic lifting} $\harm{u}$ to $(L,f,B)$ by
$$ L \harm{u} = f \ \text{in} \ B $$
$$ \harm{u} = u \ \text{on} \ \partial B $$
$$ \harm{u} = u \ \text{in} \ \Om \setminus B$$
By \ref{SolBall} $\harm{u}$ exists and $\harm{u} \in C^0(\clo{B}) \cap \Ctwoalpha{B}$.
Furthermore for every ball $B'$ with $B' \subset \subset B$ we have by the estimate \ref{EllipEstimateComp}:
$$ \norm{\harm{u}}{\Ctwoalpha{B'}} \le C (\norm{\harm{u}}{C^0(B)} + \norm{f}{\Czeroalpha{B}}) $$
\end{definition}

\begin{lemma}{Super- and subsolutions} \label{LemmaSubSuper}
\\Let $\Om$ be a bounded domain, $L$ be the elliptic differential operator \eqref{defG}, $f \in \Czeroalpha{\Om}$, then the following statements hold for super- and subsolutions to $(L,f)$.
\begin{enumerate}
\item A function $u \in \Ctwo{\Om}$ is a subsolution (supersolution) iff $Lu \ge f$ ($Lu \le f$).
\item If $u$ is subsolution and $v$ a supersolution with $u \le v$ on $\partial \Om$ then either $u < v$ or $ u=v$ in $\Om$.
\item If $u$ is a subsolution then for every Ball $B$ with $\clo{B} \subset \Om$ the corresponding harmonic lifting $\harm{u}$ is also a subsolution and $\harm{u} \ge u$.
\item If $u_1$,$u_2$ are subsolutions then max $\{u_1,u_2\}$ is also a subsolution.
\end{enumerate}
\end{lemma}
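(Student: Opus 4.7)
For (1), both directions reduce to the weak maximum principle. Forward: if $Lu \ge f$ and $Lv = f$ on a ball $B \subset\subset \Om$ with $u \le v$ on $\partial B$, then $w := u - v$ satisfies $Lw \ge 0$ in $B$ and $w \le 0$ on $\partial B$, and the first assertion of Corollary \ref{WeakMax} gives $w \le \supr{\partial B} w^+ = 0$. Conversely, suppose $Lu(x_0) < f(x_0)$ at some $x_0 \in \Om$. By continuity $Lu < f$ in a small ball $B_r(x_0) \subset\subset \Om$; use Theorem \ref{SolBall} to solve $Lv = f$ in $B_r(x_0)$ with $v = u$ on $\partial B_r(x_0)$. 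Then $L(u-v) < 0$ in the ball and $u - v = 0$ on its boundary, so the second assertion of Corollary \ref{WeakMax} forces $u \ge v$; combined with the subharmonicity hypothesis $u \le v$, we get $u \equiv v$ and thus $Lu = f$, contradicting $Lu(x_0) < f(x_0)$. The supersolution case is symmetric.

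For (2), since $u$ and $v$ are only continuous, I argue directly through the comparison definition. Set $M := \supr{\Om}(u-v)$ and suppose $M \ge 0$; by the boundary condition $M$ is attained at some interior $x_0$. For a small ball $B \subset\subset \Om$ around $x_0$, use Theorem \ref{SolBall} to obtain $\phi,\psi \in \Ctwoalpha{\clo{B}}$ with $L\phi = f = L\psi$ in $B$, $\phi = u$ and $\psi = v$ on $\partial B$. Sub/super harmonicity yield $u \le \phi$ and $v \ge \psi$ in $B$, so $(\phi - \psi)(x_0) \ge (u-v)(x_0) = M$; on the other hand $L(\phi - \psi) = 0$ and the weak maximum principle bound $\supr{B}(\phi - \psi) \le \supr{\partial B}(\phi - \psi) = \supr{\partial B}(u - v) \le M$ show that $\phi - \psi$ attains its maximum $M$ at the interior point $x_0$. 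Since $M \ge 0$, Theorem \ref{StrongMax} forces $\phi - \psi \equiv M$ on $B$, and evaluation on $\partial B$ gives $u - v \equiv M$ on $\partial B$. Hence the set $\{u - v = M\}$ is open (and closed by continuity), so equals $\Om$ by connectedness. If $M > 0$ this contradicts $u \le v$ on $\partial \Om$; if $M = 0$ it yields the alternative $u \equiv v$, otherwise the set is empty and $u < v$ throughout $\Om$.

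For (3), the inequality $\harm{u} \ge u$ follows by applying the subharmonicity of $u$ to the ball $B$ itself with the test function $\harm{u}$, which satisfies $L\harm{u} = f$ in $B$ and $\harm{u} = u$ on $\partial B$; outside $B$ the functions coincide by construction. To show that $\harm{u}$ is itself subharmonic, fix a ball $B' \subset\subset \Om$ and a function $v$ with $Lv = f$ in $B'$ and $\harm{u} \le v$ on $\partial B'$. Since $u \le \harm{u}$ globally, $u \le v$ on $\partial B'$, and the subharmonicity of the original $u$ gives $u \le v$ throughout $B'$. Outside $\clo{B}$ one has $\harm{u} = u \le v$ directly. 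Inside the open set $B \cap B'$, both $\harm{u}$ and $v$ are $\Ctwof$-smooth with $L(\harm{u} - v) = 0$; its boundary decomposes into a part in $B' \cap \partial B$ (where $\harm{u} = u \le v$, just shown) and a part in $\partial B' \cap B$ (where $\harm{u} \le v$ by hypothesis), so the weak maximum principle of Corollary \ref{WeakMax} delivers $\harm{u} \le v$ on $B \cap B'$ as well. Finally, for (4), if $w := \max\{u_1, u_2\}$, $Lv = f$ in $B$ and $w \le v$ on $\partial B$, then each $u_i \le w \le v$ on $\partial B$, so the subharmonicity of $u_i$ gives $u_i \le v$ in $B$, and taking the pointwise maximum yields $w \le v$ in $B$.

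The main obstacle is (3): the harmonic lifting $\harm{u}$ is only continuous across $\partial B$, so one cannot simply feed it into a global maximum principle. The trick that makes the argument go through is first deducing $u \le v$ on $B'$ from the subharmonicity of the original $u$ (using $u \le \harm{u} \le v$ on $\partial B'$), and then handling $B \cap B'$ and $B' \setminus \clo{B}$ separately, so that the $\Ctwof$-version of the maximum principle is only invoked on the region where $\harm{u}$ actually is $\Ctwof$.
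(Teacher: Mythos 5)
The paper itself gives no proof here; it simply points to Gilbarg--Trudinger, page 97. Your proposal is a correct self-contained reconstruction of the standard Perron-method argument and follows the same route as the cited reference: (1) is reduced to the weak maximum principle in both directions, (2) uses a connectedness argument driven by the strong maximum principle, (3) is proved by splitting $B'$ at $\partial B$ so that the $C^2$ maximum principle is only invoked where $\harm{u}$ is actually $C^2$, and (4) is the obvious pointwise comparison. The explicit observation at the end about why one must decompose $B'$ in part (3) is exactly the right way to handle the fact that $\harm{u}$ is merely continuous across $\partial B$.

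Two small points you should tighten. First, in the converse of (1) and in (2) you cite Theorem \ref{SolBall} to construct comparison functions, but that theorem requires $\Ctwoalphaf$ boundary data; the boundary values you feed in are only $C^2$ (for (1)) or $C^0$ (for (2)). You should cite Theorem \ref{BallPart}, which produces a solution in $C^0(\clo{B}) \cap \Ctwoalpha{B}$ from merely continuous data, and that regularity is enough for the maximum-principle comparisons you carry out. Second, in (2) the conclusion that $\{u-v=M\}$ is open does not follow from just one sphere $\partial B$ being contained in the set: you must note that the argument applies to every radius $r$ in some interval $(0,r_0]$, so all the spheres $\partial B_r(x_0)$, together with $x_0$ itself, fill in an open ball around $x_0$. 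With this one-line remark the openness and hence the connectedness argument is complete.
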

Remarks on the proof of this lemma can be found in \cite{GilTru} on page 97.

\begin{lemma}{Existence of a super- and subsolution} \label{SubSuperfunction} \\
Let $\Om$ be a bounded domain (contained in the slab $\{0 \leq x_1 \leq d \}$ , $L$ the elliptic differential operator \eqref{defG} with $c\le0$, $g$ a bounded function on the boundary, $f \in C^0(\Omclo)$.
Then a supersolution and subsolution respectively to $(L,f,g)$ is given by:
\begin{enumerate}
\item $v^+ = \supr{x \in \Om}{\mod{g}} + (e^{\gamma d} - e^{\gamma x_1} ) \frac{\norm{f}{C^0(\Omclo)}}{\ellip} $
\item $v^- = - \supr{x\in \Om}{ \mod{g}} - (e^{\gamma d} - e^{\gamma x_1} ) \frac{\norm{f}{C^0(\Omclo)}}{\ellip}$
\end{enumerate}
for a constant $\gamma>0$ depending on the coefficients of $L$ and $d$.
For a general bounded domain $\Om$ the appropriate super- and subsolution is given by a translation. 
\end{lemma}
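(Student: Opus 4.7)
The plan is to apply Lemma~\ref{LemmaSubSuper}(1): a $C^2$ function $v$ is a supersolution to $(L,f)$ iff $Lv \le f$, and a subsolution iff $Lv \ge f$. So I only need to verify the differential inequality and the boundary inequality separately for $v^+$ and $v^-$.

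Set $M := \sup_{\partial\Omega}|g|$ and $K := \|f\|_{C^0(\overline{\Omega})}/\lambda$, so that $v^+ = M + K(e^{\gamma d} - e^{\gamma x_1})$. Since $v^+$ depends only on $x_1$, only the $a_{11}$, $b_1$ and $c$ coefficients of $L$ contribute, yielding
\[ Lv^+ = -a_{11} K\gamma^2 e^{\gamma x_1} - b_1 K\gamma e^{\gamma x_1} + c v^+. \]
The key observation is that $0 \le x_1 \le d$ forces $v^+ \ge M \ge 0$, so together with $c \le 0$ this gives $cv^+ \le 0$. Combining with $a_{11}(x) \ge \lambda$ (ellipticity applied to $\xi = e_1$) and $|b_1| \le \Lambda$ yields
\[ Lv^+ \le K e^{\gamma x_1}\bigl(\Lambda\gamma - \lambda\gamma^2\bigr). \]
Now I choose $\gamma$ so that $\lambda\gamma^2 - \Lambda\gamma \ge \lambda$ (any $\gamma \ge (\Lambda + \sqrt{\Lambda^2 + 4\lambda^2})/(2\lambda)$ works); combined with $e^{\gamma x_1} \ge 1$ this delivers $Lv^+ \le -K\lambda = -\|f\|_{C^0(\overline\Omega)} \le f$. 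The boundary inequality $v^+ \ge M \ge g$ on $\partial\Omega$ is immediate, so $v^+$ is a superfunction to $(L,f,g)$.

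For $v^- = -v^+$ the sign bookkeeping is the mirror image: $v^- \le -M \le 0$ together with $c \le 0$ gives $cv^- \ge 0$, and the same estimate with inequalities reversed produces $Lv^- \ge K\lambda = \|f\|_{C^0(\overline\Omega)} \ge f$, while $v^- \le -M \le g$ on $\partial\Omega$. Hence $v^-$ is a subfunction. For a general bounded $\Omega$, I would pick $x_0 \in \R$ with $\Omega \subset \{x_0 \le x_1 \le x_0 + d\}$ (possible by boundedness) and replace $e^{\gamma x_1}$ by $e^{\gamma(x_1 - x_0)}$ throughout; the computation is unchanged because the derivative structure and the bound $e^{\gamma(x_1-x_0)} \ge 1$ are preserved.

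The only real subtlety is the sign of the $cv^{\pm}$ term: this is what forces the additive $\sup|g|$ constant to sit \emph{outside} the exponential rather than merge into it, ensuring $v^+ \ge 0$ (and $v^- \le 0$) on $\Omega$. Apart from that, the proof is a short direct computation whose main content lies in the choice of $\gamma$.
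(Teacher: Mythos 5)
Your proof is correct and follows essentially the same route as the paper: the same computation of $Lv^+$ using only the $x_1$-dependence, the same bound via $a_{11}\ge\lambda$, $|b_1|\le\Lambda$, $cv^+\le 0$, and the same choice of $\gamma$ large enough to make the coefficient at most $-1$. Your write-up is slightly more explicit about where $e^{\gamma x_1}\ge 1$ is used and about the translation for general $\Omega$, but the idea is identical.
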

\begin{proof}
Consider $v^+$. Since the second term is positive for $x \in \Omclo$ it holds $v^+ \ge g \ \text{on} \ \partial \Om$. \label{boundaryg}
Set $f_1 = \frac{\norm{f}{C^0(\Omclo)}}{\ellip}$, then for $\gamma > 0$:

\begin{align*}
L v^+  &= (a_{11} \pdx{1} \pdx{1} v^+ + b_1 \pdx{1} v^+)f_1 + c v^+  \\
&\le -(a_{11} \gamma^2 e^{\gamma x_1} - b_1 \gamma e^{\gamma x_1})f_1 \\
&\le (- \ellip \gamma + \coeffbound) \gamma e^{\gamma x_1} f_1   \\
&= (- \gamma + \frac{\coeffbound}{\ellip}) \gamma e^{\gamma x_1} \norm{f}{C^0(\Omclo)} 
\end{align*}
Choosing $\gamma$ sufficiently large we have $L v^+ \le - \norm{f}{C^0(\Omclo)} \le f$. 
Together with \ref{boundaryg} we have, that $L v^+$ is a supersolution.
The proof for the subsolution works analougsly.
\end{proof}
\begin{remark}
For a solution $u$ with $\LufOm$ and $u = g \ \text{on} \ \partial \Om$ it holds:
$$ v^- \leq u \leq v^+ $$
This prooves the interior sup-norm estimate of a solution \ref{IntMaxEstimate}.
The choice of $\gamma$ determines $\mod{(e^{\gamma d} - e^{\gamma x_1})}$ and therefore the constant $C$ in the interior estimate \ref{IntMaxEstimate}. 
Analyzing the proof one can see, that if $\frac{\coeffbound}{\ellip}$ gets smaller then the lowest possible choice of $\gamma$ decreases and therefore also the constant $C$. This gives the monotone dependence of $C$, as mentioned in \ref{IntMaxEstimate}.
\end{remark}
\begin{theorem}[Interior solution]{\ } \label{IntSol}
\\Let $L$ be the elliptic differential operator \eqref{defG}, $f \in \Czeroalpha{\Omclo}$,$g \in C^0(\Omclo)$. $S_g$ the set of all superfunctions to $(L,f,g)$.
The function 
\begin{eqnarray}u(x) = \supr{v \in S_\boundf} v(x) \label{IntSolEq} \end{eqnarray}
is in $\Ctwoalpha{\Om}$ and fulfills $Lu = f$ in $\Om$. 
\end{theorem}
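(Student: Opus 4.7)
The plan is to carry out the classical Perron construction: near any $x_0 \in \Om$ show that $u$ defined by \eqref{IntSolEq} coincides with the $C^{2,\alpha}$-limit of harmonic liftings of suitable subfunctions in $S_\boundf$ (as defined in Definition \ref{Sub}), and then invoke the strong maximum principle to propagate this equality to a whole neighborhood of $x_0$. First, I would verify that the supremum is finite. Lemma \ref{SubSuperfunction} provides an explicit subfunction $v^- \in S_\boundf$, so $S_\boundf \ne \emptyset$, and by Lemma \ref{LemmaSubSuper}(2) every $v \in S_\boundf$ is dominated pointwise by the explicit supersolution $v^+$; thus $v^- \le u \le v^+$ in $\Om$.

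Fix $x_0 \in \Om$ and a ball $B = B_R(x_0)$ with $\clo{B} \subset \Om$, and choose $(v_n) \subset S_\boundf$ with $v_n(x_0) \to u(x_0)$. After replacing $v_n$ by $\max\{v_n,v^-\}$, which remains a subfunction by Lemma \ref{LemmaSubSuper}(4), I may assume $v^- \le v_n \le v^+$ for all $n$. Now replace $v_n$ by its harmonic lifting $V_n := \harm{v_n}$ on $B$ from Definition \ref{HarmonicLifting}. By Lemma \ref{LemmaSubSuper}(3), $V_n \in S_\boundf$ and $V_n \ge v_n$, so $V_n(x_0) \to u(x_0)$ as well. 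On the concentric ball $B' = B_{R/2}(x_0)$, Corollary \ref{EllipEstimateComp} gives
\begin{equation*}
\norm{V_n}{\Ctwoalpha{B'}} \le C\bigl(\norm{V_n}{C^0(B)} + \weightedNorm{f}{\Czeroalpha{B}}{(2+\alpha)}\bigr),
\end{equation*}
with a constant independent of $n$ (the $C^0$-norm is controlled by $\max\{\norm{v^-}{},\norm{v^+}{}\}$). The compact embedding $\Ctwoalpha{B'} \emb \Ctwof{B'}$ yields a subsequence $V_{n_k} \to w$ in $C^2(B')$ with $L w = f$ on $B'$; the uniform $\Ctwoalpha{B'}$-bound is inherited by the limit, so $w \in \Ctwoalpha{B'}$ and $w(x_0) = u(x_0)$.

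The crux is to prove $w \equiv u$ on $B'$. Suppose instead that $w(y) < u(y)$ at some $y \in B'$. Pick $\tilde v \in S_\boundf$ with $\tilde v(y) > w(y)$ and set $W_n := \harm{\max\{V_n,\tilde v\}}$ on $B$. By Lemma \ref{LemmaSubSuper}(3,4) the $W_n$ lie in $S_\boundf$, dominate both $V_n$ and $\tilde v$, and enjoy the same uniform $\Ctwoalpha{B'}$ estimate. Extracting a further $C^2(B')$-limit $\tilde w$, I get $L\tilde w = f$ on $B'$, $\tilde w \ge w$ on $B'$, $\tilde w(x_0) = u(x_0) = w(x_0)$, and $\tilde w(y) \ge \tilde v(y) > w(y)$. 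Then $z := w - \tilde w$ satisfies $L z = 0$, $z \le 0$, and attains the value $0$ at the interior point $x_0$; Theorem \ref{StrongMax} forces $z$ to be constant on $B'$, contradicting $z(y) < 0$.

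Hence $w \equiv u$ on $B'$, so $u \in \Ctwoalpha{B'}$ and $L u = f$ on $B'$. Since $x_0$ was arbitrary, $u \in \Ctwoalpha{\Om}$ and $L u = f$ throughout $\Om$. The hard part is precisely this last step: the $V_n$ construction alone only pins down $u$ at the single point $x_0$, and it is the doubling trick with $\max\{V_n,\tilde v\}$ combined with the strong maximum principle that propagates the identification to a full neighborhood. Both Theorem \ref{StrongMax} and Lemma \ref{LemmaSubSuper} require the sign condition $c \le 0$, which is therefore implicitly assumed here; the general case follows by the standard shift $L \mapsto L - \omega$ used in Lemma \ref{Dissipative}.
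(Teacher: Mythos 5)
Your argument reproduces the paper's Perron-method proof in all essential respects: truncation by the explicit sub/supersolutions to get $v^- \le u \le v^+$, harmonic liftings of a maximizing sequence, the interior Schauder estimate from Corollary \ref{EllipEstimateComp} for uniform $\Ctwoalpha{B'}$ bounds, compactness of the embedding, and the strong maximum principle \ref{StrongMax} to upgrade the pointwise identification $w(x_0)=u(x_0)$ to equality on all of $B'$. The only (cosmetic) variation is in the contradiction step, where you lift $\max\{V_n,\tilde v\}$ with a single auxiliary subfunction $\tilde v$ chosen at the offending point $y$, while the paper lifts $\max\{v',w_k\}$ for a second sequence $w_k(x_1)\to u(x_1)$; both routes produce the same $L$-harmonic difference vanishing at the interior maximum $x_0$ and hence the same contradiction.
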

\begin{proof}
By \ref{LemmaSubSuper}, item (ii) each subfunction is dominated by a superfunction. Hence with the sub- and superfunctions from \ref{SubSuperfunction} we have:
$$ v^- (x) \le u(x) \le v^+ (x), \ x \in \Om $$
thus $ \mod{u(x)} < \infty$. \newline
Let $x_0 \in \interior{\Om}$. Choose two Balls $B'(x_0)$, $B(x_0)$ with  
$$B'(x_0) \subset B(x_0) \subset \subset \Om$$
By definition of the sup there exists a sequence of subfunctions $v_k \ \in C^0(\Omclo)$  such that $v_k(x_0) \limk u(x)$. Define $v'_k = \harm{max \{v_k,\infi{x \in \Om}{u(x)}\}}$, the harmonic lifting to $(L,f,B)$. Then $v'_k \in \Ctwoalpha{B}$. 
Since $$\mod{max \{v_k,\infi{x \in \Om}{u(x)}\}} \le \mod{u}$$ we have by the interior maximum estimate \ref{IntMaxEstimate}: 
$$ \norm{v'_k}{C^0(\clo{B})} \le (\norm{u}{C^0(\Omclo)} + C \norm{f}{C^0(\Omclo)})$$
Thus the $v'_k$ are  bounded. \newline
Furthermore $v'_k \geq v_k$ in $B$, thus $v'_k(x_0) \limk u(x_0)$. 
By \ref{HarmonicLifting} we have:
\begin{multline} \norm{v'_k}{\Ctwoalpha{B'}} \le C (\norm{v'_k}{C^0(\clo{B})} + \norm{f}{\Czeroalpha{\clo{B}}}) \le C_2( \norm{u}{C^0(\Omclo)} + \norm{f}{\Czeroalpha{\Omclo}})
\end{multline}
Thus the functions $(v'_k)_k$ are uniformly bounded in $\Ctwoalpha{B'}$. By the compactness of the embedding $ \Ctwoalpha{\clo{B'}} \emb \Ctwo{\clo{B'}}$ there is a subsequence converging in $\Ctwof$ to some $v'$. This $v'$ satisfies $L v' = f$ in $B'$ and $u(x_0) = v'(x_0)$. Since the sequence converges in $\Ctwof$-norm and is uniformly bounded in $\Ctwoalphaf$ the limit $v'$ is also in $\Ctwoalphaf(B')$ (by \ref{LimitPreCompact}). \newline
Assume now there exists a $x_1 \in B'$ such that: $v'(x_1) < u(x_1)$. Let $(w_k) \in S_g$ such that $w_k(x_1) \limk u(x_1)$. Define $w'_k = \harm {max \{v',w_k\}}$ again by compactness we find a $\Ctwoalphaf$-smooth $w$ with $Lw=f$ and $w \geq v'$ in $B$. Hence $L(v'-w) = 0 \ \ \text{in}  \ B'$, $(v'-w) \leq 0$. But at $x_0$ we have $u(x_0) = v'(x_0) =w(x_0)$. By the strong maximum principle \ref{StrongMax} the function $v-w$ must be constant, contradicting the assumption. Hence $u(x) = v'(x)$ in $B'$ and therefore $u(x)$ is $\Ctwoalphaf$-smooth in $x_0$ and satisfies $(L u)(x_0) = f(x_0)$. 
\end{proof}
The theorem above provides an interior solution for \ref{DirProblemEllip}. The next step is now to show that under certain conditions on the boundary, the solution attains the boundary values $\boundf$ continuously. These conditions are especially fulfilled for $C^2$-smooth boundaries and cuboid domains.

\newcommand{\wi}{w^{+/-}_\epsi}
\newcommand{\wip}{w^+_\epsi}
\newcommand{\wim}{w^-_\epsi}
\begin{definition}{Barrier} \label{Barrier}
\\Let $(L,f,g)$ as in \ref{IntSol}. A net of functions $(\wi)_{\epsi>0} \in \Ctwo{\Om} \cap C^0(\Omclo)$ is called a barrier to $(L,f,g)$ for a point $x_0$, if:
\begin{enumerate}
\item $L\wip \leq f$, $L\wim \geq f \ \ \text{in}  \ \Om$
\item $\wip \geq \boundf$, $\wim \leq \boundf \ on \ \partial \Om$
\item $\wi(x_0) \overset{\epsi \rightarrow 0}{\longrightarrow} \boundf(x_0)$  
\end{enumerate}
 
\end{definition}
Each $\wip$ is a superfunction and each $\wim$ is a subfunction. Hence $\wim(x) \leq u(x) \leq \wip(x) $ therefore we get the following lemma:

\begin{lemma}{Continous attaining of boundary values} \label{ContBoundary}
\\Let $(L,f,g)$ as in \ref{IntSol}. 
If $x_0$ has a barrier then for $u(x) = \supr{v \in S_\boundf}$, $g \in C^0(\Omclo)$ it holds $ u(x) \rightarrow u(x_0)$ for $ x \rightarrow x_0$, i.e. $u(x) \in C^0(\Omclo)$.
\end{lemma}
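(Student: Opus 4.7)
My plan is to show the two-sided sandwich $\wim(x) \le u(x) \le \wip(x)$ for all $x \in \Om$, and then read off continuity at $x_0$ by letting $\epsi \to 0$ and exploiting that the barrier functions themselves are continuous on $\Omclo$.

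First, I would verify that each $\wip$ is a superfunction in the sense of the Perron construction. Property (i) of Definition \ref{Barrier} gives $L\wip \le f$, so by Lemma \ref{LemmaSubSuper}(i) $\wip$ is a supersolution in $\Om$; property (ii) gives $\wip \ge g$ on $\partial \Om$, so $\wip$ is a superfunction for $(L,f,g)$. The analogous reasoning shows $\wim$ is a subfunction for $(L,f,g)$, hence $\wim \in S_g$ and in particular $\wim(x) \le u(x)$ for every $x \in \Om$.

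For the upper bound I would use the comparison statement in Lemma \ref{LemmaSubSuper}(ii): any subfunction $v \in S_g$ satisfies $v \le g \le \wip$ on $\partial \Om$, so $v \le \wip$ in $\Om$. Taking the supremum over $v \in S_g$ yields $u(x) \le \wip(x)$ for every $x \in \Om$, so altogether
\[
 \wim(x) \le u(x) \le \wip(x), \qquad x \in \Om.
\]
This is really the heart of the argument; the only subtlety is that Lemma \ref{LemmaSubSuper}(ii) is stated for sub/supersolutions in $\Om$, and I should make sure the barriers' regularity $\Ctwo{\Om} \cap C^0(\Omclo)$ is enough to invoke it (it is, since sub-/supersolutions in the sense of Definition \ref{Sub} are defined comparatively).

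Finally I would pass to the limit at $x_0$. Given $\eta > 0$, property (iii) of the barrier lets me fix $\epsi>0$ so small that $|\wip(x_0) - g(x_0)| < \eta/3$ and $|\wim(x_0) - g(x_0)| < \eta/3$. Since $\wip, \wim \in C^0(\Omclo)$, there is $\delta > 0$ with $|\wip(x) - \wip(x_0)| < \eta/3$ and $|\wim(x) - \wim(x_0)| < \eta/3$ whenever $x \in \Om$ and $|x - x_0| < \delta$. Combining these with the sandwich gives $|u(x) - g(x_0)| < \eta$ for all such $x$, which is the desired continuous attainment of the boundary value at $x_0$. The main obstacle is conceptual rather than technical: one must correctly identify $\wip$ (resp.\ $\wim$) as a legitimate super- (resp.\ sub-)function so that the Perron extremum is automatically sandwiched between them; once that is in place, the limit argument is a routine $\epsi$--$\delta$ chase.
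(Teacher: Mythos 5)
Your proof is correct and follows essentially the same route as the paper: establish the sandwich $\wim \le u \le \wip$ by recognizing the barriers as sub- and superfunctions, then let $\epsi \to 0$ using barrier property (iii) and continuity of the $\wi$. The paper states this more tersely (it even notes the sandwich just before the lemma), but your more explicit justification of why $\wip$ bounds every subfunction from above, and your $\epsi$--$\delta$ chase at the end, are simply a careful unpacking of the same argument.
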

\begin{proof}
Since $w^+_\epsi$ and $w^-_\epsi$ are superfunctions and subfunctions respectively we have for all $\epsi>0$:
$ w^-_\epsi(x) \le u(x) \le w^+_\epsi(x) \ \text{for } x \in \Om $.
Since $w_\epsi(x_0) \overset{\epsilon \rightarrow 0}{\longrightarrow}  g(x_0)$ and $w_\epsi$ are continuous we have 
$u(x) \rightarrow g(x_0)$ if $x \rightarrow x_0$.
\end{proof}

\begin{lemma}{External sphere condition} \label{ExtSphere} \\
Let $L$ be the elliptic differential operator \eqref{defG}, $g \in C^0(\Omclo)$, $f \in \Czeroalpha{\Om}$. 
A point $x_0$ on $\partial \Om$ satisfies the external sphere condition, if there exists a sphere $S$ such that $S \cap \partial \Om = \{x_0 \}$. In this case the interior solution $u (x) = \supr{v \in S_g}{v(x)} $ is continuous at this point. 
In particular, if all points satisfy the external sphere condition then the solution is in $\Ctwoalpha{\Om} \cap C^0(\Omclo)$.
\end{lemma}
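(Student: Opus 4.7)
The plan is to invoke Lemma \ref{ContBoundary}: given the interior solution $u$ furnished by Theorem \ref{IntSol}, it suffices to exhibit a barrier $(\wi)_{\epsi>0}$ at $x_0$ in the sense of Definition \ref{Barrier}. The standard route, as in \cite{GilTru}, is to build a single strict supersolution $w$ that vanishes at $x_0$ and is strictly positive on the rest of $\Omclo$, and then to assemble $\wip$ and $\wim$ by scaling $w$ and translating by a constant close to $g(x_0)$.

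First I would fix the ansatz coming from the external sphere. Let $B_R(y_0)$ be the external ball, so that $r(x) := |x-y_0| \ge R$ on $\Omclo$ with equality only at $x_0$. Set
\[
 w(x) = R^{-p} - r(x)^{-p},
\]
for a parameter $p>0$ to be chosen. Direct differentiation yields
\[
 \partial_i\partial_j w = p\bigl[\delta_{ij}\, r^{-p-2} - (p+2)\,r^{-p-4}(x_i - y_{0,i})(x_j - y_{0,j})\bigr],
\]
so uniform ellipticity and the coefficient bound $\coeffbound$ give
\[
 \sumij a_{ij}\,\partial_i\partial_j w \;\le\; p\,r^{-p-2}\bigl[n\coeffbound - (p+2)\ellip\bigr];
\]
the first-order contribution is $O(p\coeffbound r^{-p-1})$, and $w$ is uniformly bounded on $\Omclo$ so that (assuming $c\le 0$; otherwise one reduces to that case via Lemma \ref{Dissipative}) the term $cw$ is harmless. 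Since $r$ is confined to a bounded interval bounded away from zero, choosing $p$ sufficiently large (depending only on $n,\ellip,\coeffbound,\Om,y_0$) produces $Lw \le -k$ on $\Om$ for some $k>0$.

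Next I would assemble the net. Fix $\epsi>0$. By continuity of $g$ choose $\delta>0$ with $|g(x) - g(x_0)| < \epsi$ on $\partial \Om \cap B_\delta(x_0)$, and set $m := \inf\{w(x)\,:\, x \in \Omclo \setminus B_\delta(x_0)\}$, which is strictly positive. Pick $K_\epsi$ large enough that $K_\epsi m \ge 2\norm{g}{C^0(\partial\Om)} + |g(x_0)| + \epsi$ and $K_\epsi k \ge \norm{f}{C^0(\Omclo)} + \coeffbound(|g(x_0)|+\epsi)$, and put
\[
 \wip(x) = g(x_0) + \epsi + K_\epsi w(x), \qquad \wim(x) = g(x_0) - \epsi - K_\epsi w(x).
\]
On $\partial \Om \cap B_\delta(x_0)$ one has $\wip \ge g(x_0)+\epsi \ge g$, and on $\partial \Om \setminus B_\delta(x_0)$ the choice of $K_\epsi m$ forces $\wip \ge g$; symmetrically $\wim \le g$. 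The preceding computation gives $L\wip \le f$ and $L\wim \ge f$ in $\Om$, while $\wi(x_0) = g(x_0) \pm \epsi \to g(x_0)$ as $\epsi \downarrow 0$. Thus $(\wi)$ is a barrier at $x_0$, and Lemma \ref{ContBoundary} delivers continuity of $u$ at $x_0$.

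The main obstacle is the verification that $w = R^{-p} - r^{-p}$ is a strict supersolution \emph{uniformly} on $\Om$ — one must balance the ellipticity gain against the first- and zeroth-order contributions by choosing $p$ large enough — and must ensure the $\epsi$-dependence of $K_\epsi$ is compatible with the convergence $\wi(x_0) \to g(x_0)$, which is automatic here since $w(x_0)=0$. Finally, under the global hypothesis that every boundary point admits an external sphere, the above construction applies pointwise and yields continuity of $u$ on all of $\partial \Om$; combined with the interior regularity of Theorem \ref{IntSol}, this gives $u \in \Ctwoalpha{\Om} \cap C^0(\Omclo)$.
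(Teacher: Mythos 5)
Your proposal is correct and follows essentially the same route as the paper: the same ansatz $w = R^{-p} - r^{-p}$ (the paper writes $\tau(R^{-\sigma}-r^{-\sigma})$, folding the later scaling by $K_\epsi$ into the constant $\tau$), the same choice of a large exponent to make $Lw$ strictly negative using ellipticity against the bounded first- and zeroth-order coefficients, and the same assembly of the barrier $\wip = g(x_0)+\epsi+K_\epsi w$, $\wim = g(x_0)-\epsi-K_\epsi w$, concluding via Lemma \ref{ContBoundary} and the interior regularity from Theorem \ref{IntSol}.
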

\begin{proof}
Let $x_0 \in \partial \Om$ and $B_R(y)$ such that: $B_R(y) \cap \partial \Om = \{x_0\}$. By translation we may assume $y=0$. Define $w(x) = \tau(R^{-\sigma}-r^{-\sigma}), \ r=|x|$. For $x \in \Om$: 
\begin{alignat*}{1}
 L w (x) =\tau L (R^{-\sigma}-r^{-\sigma}) 
&= \tau \sigma r^{-(\sigma+4)}(-(\sigma+2) \sumij{a_{ij}x_ix_j} + r^2(\sumi{a_{ii}+b_ix_i}) ) + \tau c (R^{-\sigma}-r^{-\sigma}) \\
 &\leq \tau \sigma r^{-(\sigma+2)}(-(\sigma+2)\ellip + \sumi{a_{ii}+b_ix_i}) 
\end{alignat*}
The last equality follows since $ c (R^{-\sigma}-r^{-\sigma}) \le 0 $ and $a_{ij}x_ix_j > \ellip \norm{x}{}^2=\ellip r^2$. \\
Since $\Om$ is bounded, we have $\sumi{a_{ii}+b_ix_i} = C_1 < \infty$. Thus for $\sigma$ large enough the second factor is strictly negative, then choose $\tau$ large enough such that
$$ L w(x) \leq -1 \ \text{in} \ \Om $$
Furthermore $w(x_0) = 0$ and by the external sphere condition $dist(\Om,0) > \mod{x_0} = r$, \\
$w(x) > 0 \ \text{in} \ \Om \setminus {x_0}$. 
Using this function we can now construct a net of barriers. \\
Let $\epsi > 0$.
Since $g$ is continuous there exists a neighborhood $U$ with $g(x) < g(x_0)+ \epsi$. \\
Since $w(x) > \delta > 0$ for all $x \in \partial \Om \setminus (\partial \Om \cap U)$, 
there exists a $k_\epsi$ such that: $$w^+_\epsi(x) := \boundf(x_0)+\epsi+k_\epsi w(x) \geq \boundf(x)$$

Substitute $k_\epsi' = max(k_\epsi, \supr{\Om}{\mod{f}+\coeffbound \mod{\boundf(x_0)}})$ then

\begin{alignat*}{1} 
Lw^+_\epsi(x) &= c(\boundf(x_0) +\epsi) + k_\epsi L w(x) \notag \\
&\leq c(x) \boundf(x_0) - k'_\epsi \notag \\
&\le c(x) \boundf(x_0) - \mod{f(x)} - \coeffbound \mod{\boundf(x_0)} \notag \\
&\le - \mod{f(x)} \notag \\
\end{alignat*} 

Hence $w^+_\epsi$ defines an upper barrier analougsly $w^-_\epsi = \boundf(x_0) - \epsi -k_\epsi w(x)$.  

By \ref{ContBoundary} the solution is continuous in $x_0$. 

\end{proof}

\begin{theorem}[Smooth solution on smooth boundary portion]{ \ } \label{SmoothBoundaryPortion}
\\If $T$ is a $\Ctwoalphaf$-smooth boundary portion and $\boundf \in \Ctwoalpha{\Om \cup T}$ then the solution $u$ \eqref{IntSolEq} is in 
$\Ctwoalpha{\Om \cup T}$. If $\partial \Om$ is $\Ctwoalphaf$ then the solution is in
$\Ctwoalpha{\Omclo}$.
\end{theorem}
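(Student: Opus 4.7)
The strategy is to first upgrade the interior solution of Theorem \ref{IntSol} to a function continuous on $\Om\cup T$ via the external sphere condition, and then to obtain $\Ctwoalphaf$-regularity up to $T$ by a local chart argument that reduces the problem to the half-ball setting, where a variant of Theorem \ref{BallPart} applies.

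For the continuity step, $\Ctwoalphaf$-smoothness (hence $\Ctwof$-smoothness) of $T$ furnishes at every $x_0\in T$ an external tangent ball in $\R^n\setminus\clo{\Om}$, so the external sphere condition of Lemma \ref{ExtSphere} holds at every point of $T$. That lemma yields $\lim_{x\to x_0}u(x)=\boundf(x_0)$, and combined with Theorem \ref{IntSol} we obtain $u\in C^0(\Om\cup T)\cap\Ctwoalpha{\Om}$ with $Lu=f$ in $\Om$ and $u=\boundf$ on $T$.

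For the regularity step, fix $x_0\in T$. The $\Ctwoalphaf$-smoothness of $T$ near $x_0$ supplies a neighborhood $U$ of $x_0$ and a $\Ctwoalphaf$-diffeomorphism $\Phi:U\to B_\rho(0)$ with $\Phi(U\cap\Om)=\Bplus{\rho}$ and $\Phi(U\cap T)=\Bzero{\rho}$. Setting $\tilde u=u\circ\Phi^{-1}$, $\tilde{\boundf}=\boundf\circ\Phi^{-1}$, $\tilde f=f\circ\Phi^{-1}$, the function $\tilde u$ satisfies a transformed elliptic equation $\tilde L\tilde u=\tilde f$ on $\Bplus{\rho}$ whose coefficients remain $\Czeroalphaf$-smooth (since $\Phi\in\Ctwoalphaf$), with $\tilde u$ continuous on $\clo{\Bplus{\rho}}$, $\tilde u=\tilde{\boundf}$ on $\Bzero{\rho}$, and $\tilde{\boundf}$ locally $\Ctwoalphaf$-smooth. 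On a smaller half-ball $\Bplus{r}$ with $r<\rho$, I would construct an auxiliary solution $\tilde v\in C^0(\clo{\Bplus{r}})\cap\Ctwoalpha{\Bplus{r}\cup\Bzero{r}}$ satisfying $\tilde L\tilde v=\tilde f$ in $\Bplus{r}$, $\tilde v=\tilde{\boundf}$ on $\Bzero{r}$, and $\tilde v=\tilde u$ on the curved part $\partial B_r(0)\cap\{x_n>0\}$. The construction mirrors the proof of \ref{BallPart}: subtract a $\Ctwoalphaf$-extension of $\tilde{\boundf}$ to reduce to zero flat-boundary data; approximate the continuous data on the curved part by $\Ctwoalphaf$-data via convolution; for each approximation solve the half-ball Dirichlet problem with $\Ctwoalphaf$-data (obtained either by odd reflection across $\{x_n=0\}$ to reduce to Theorem \ref{SolBall}, or by repeating the method of continuity on the half-ball using the boundary estimate \ref{EstimateLocalSmoothBoundary} in place of \ref{EstimateSmoothBoundary}); and pass to the $\Ctwof$-limit, using uniform $\Ctwoalphaf$-bounds up to the flat part again supplied by \ref{EstimateLocalSmoothBoundary}.

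By the weak maximum principle (Corollary \ref{WeakMax}) applied to $\tilde v-\tilde u$ on $\Bplus{r}$ we have $\tilde v\equiv\tilde u$, hence $\tilde u\in\Ctwoalpha{\Bplus{r}\cup\Bzero{r}}$. Pulling back by $\Phi^{-1}$, which is $\Ctwoalphaf$, shows $u$ is $\Ctwoalphaf$-smooth on a relative neighborhood of $x_0$ in $\Om\cup T$, and as $x_0\in T$ was arbitrary we conclude $u\in\Ctwoalpha{\Om\cup T}$. If $\partial\Om$ is globally $\Ctwoalphaf$-smooth, applying the local argument at every boundary point gives $u\in\Ctwoalpha{\Omclo}$. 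The main obstacle is the half-ball existence sub-step: odd reflection is only clean when the coefficients of $\tilde L$ and the data $\tilde f$ have the right parity across $\{x_n=0\}$, which is not automatic after a general $\Ctwoalphaf$-flattening, so in the generic case one must push the method of continuity through on the half-ball, which is where the bulk of the technical work sits.
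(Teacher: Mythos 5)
The paper offers no proof of this theorem; it simply cites Gilbarg--Trudinger, Theorem 6.14. Your outline correctly reproduces the strategy behind that result: exterior spheres from the $C^2$-smoothness of $T$ together with Lemma \ref{ExtSphere} give continuity up to $T$; a local $\Ctwoalphaf$-flattening preserves ellipticity and $\Czeroalphaf$-coefficients; an auxiliary half-ball solution $\tilde v$ is built in the spirit of Theorem \ref{BallPart}; and $\tilde v\equiv\tilde u$ by Corollary \ref{WeakMax} followed by a pull-back. Those pieces are sound.

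The genuine gap is the one you flag yourself but then leave open: producing $\tilde v\in C^0(\clo{\Bplus{r}})\cap\Ctwoalpha{\Bplus{r}\cup\Bzero{r}}$ with the prescribed mixed data. Neither of your two suggested routes closes it with the tools in this paper. Odd reflection is not rescued by the normalization of Lemma \ref{TrafoNoMix}: that transformation only annihilates the mixed second-order coefficients $\tilde a_{in}$ on $\{x_n=0\}$, while the drift $\tilde b_n$ and the source $\tilde f$ in general do not vanish there, so their odd extensions jump across the flat disk, the reflected operator fails to have $\Czeroalphaf$-coefficients, and the interior Schauder estimate \ref{EllipEstimateInt} cannot be applied across $\{x_n=0\}$. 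The method of continuity on the half-ball is likewise blocked as stated: $\Bplus{r}$ has a corner where the hemisphere meets the flat disk, so the global estimate \ref{EstimateSmoothBoundary} is unavailable there, while \ref{EstimateLocalSmoothBoundary} presupposes the very $\Ctwoalpha{\Om\cup T}$-regularity that is the conclusion. What is actually required is the weighted Schauder machinery: carry out the continuity method in the distance-weighted boundary norms of Lemma \ref{EllipEstimateInt}, which degenerate at the corner and thereby render it harmless, and establish the base-case bijectivity of $\Delta$ on the half-ball in these weighted spaces (where odd reflection is legitimate, since the Laplacian has the right parity). This is precisely the content of GT Lemma 6.5, Corollary 6.7, and Theorem 6.8. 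Absent that, the existence of your auxiliary $\tilde v$ with $\Ctwoalphaf$-regularity up to $\Bzero{r}$ is unsupported, and the proof does not close.
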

\begin{proof}
See \cite{GilTru}, Theorem 6.14, page 101.
\end{proof}
Summarizing we have seen that for $\Ctwoalphaf$-smooth boundaries and $f \in \Czeroalpha{\Omclo}$ the Dirichlet problem \ref{DirProblemEllip} has a $\Ctwoalpha{\Omclo}$-smooth solution. In cuboid domains we have smoothness of the solutions except for the boundary corners and edges, the solution is however continuous on the whole boundary.

\section{Elliptic operators with $\Ctwoalphaf$-coefficients in $C^{4,\alpha}$ smooth domains}
\begin{explainVor}
In this section we consider the assumptions \ref{assSemi}, \ref{assCont} on domains having $C^{4,\alpha}$ smooth boundary. For the differential operator \eqref{defG} assume additionally that the coefficients $a_{ij},b_{i} \in \Ctwoalpha{\Omclo} \ \text{for} \ i,j \ \le n$.
The additional assumptions on the coefficients are made to construct the contractive extension operator, the semigroup exists also for coefficients in $\Czeroalphaf$. The higher smoothness assumptions of the domain are made to ensure that also the transformed differential operator (see \ref{TrafoDiffOp}) has $C^{2,\alpha}$ smooth coefficients.
Recall that the matrices $a_{ij}$ are assumed to be uniformly elliptic with ellipticity constant $\ellip>0$ and the coefficients are bounded by $\coeffbound$. 
Throughout this chapter $\Om$ is a bounded domain in $\R^n$, $x_n$ denotes the n-th coordinate of a point in $\R^n$.
\end{explainVor}

\begin{lemma}[Density of $C^{0,\alpha}_0(\Omclo)$ in $C^0_0(\Omclo)$]
The space $C^{0,\alpha}_0(\Omclo)$ is dense in $C^0_0(\Omclo)$.
\end{lemma}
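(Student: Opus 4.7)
The plan is to approximate $u \in C^{0}_0(\Omclo)$ uniformly by functions that vanish near $\partial\Omega$ and are smooth, since such functions are automatically $\alpha$-Hölder on the compact set $\Omclo$ and vanish on $\partial\Omega$. The argument proceeds in two steps: a truncation that kills $u$ in a neighborhood of $\partial\Omega$, followed by a convolution with a standard mollifier that smooths while preserving the support condition.

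First I would fix $\epsi > 0$ and define
$$
u_\epsi(x) := \operatorname{sign}(u(x)) \max\{|u(x)| - \epsi, 0\}.
$$
Because $u$ is uniformly continuous on $\Omclo$ and $u \equiv 0$ on $\partial\Om$, the set $\{x \in \Omclo \ : \ |u(x)| \ge \epsi\}$ is a compact subset of $\Om$; consequently $u_\epsi$ is continuous on $\Omclo$, vanishes on an open neighborhood $U_\epsi$ of $\partial\Omega$, and satisfies $\|u - u_\epsi\|_{C^0(\Omclo)} \le \epsi$. Extending $u_\epsi$ by zero to $\R^n$ preserves continuity.

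Next I would apply a standard Dirac sequence $(\varphi_\delta)_{\delta > 0}$ with $\supp(\varphi_\delta) \subset B_\delta(0)$ and set $v_{\epsi,\delta} := \varphi_\delta \ast u_\epsi$. For $\delta$ smaller than $\tfrac{1}{2} \operatorname{dist}(\supp u_\epsi, \partial \Om)$, the function $v_{\epsi,\delta}$ has support compactly contained in $\Om$, is $C^\infty$ on $\R^n$, and therefore lies in $C^{0,\alpha}_0(\Omclo)$ (its restriction to $\Omclo$ is smooth on a compact set and vanishes on $\partial \Omega$). Standard properties of mollifiers yield $v_{\epsi,\delta} \to u_\epsi$ uniformly on $\R^n$ as $\delta \to 0$, so one can pick $\delta(\epsi)$ with $\|v_{\epsi,\delta(\epsi)} - u_\epsi\|_{C^0(\Omclo)} \le \epsi$. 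Combining the two estimates via the triangle inequality gives
$$
\|u - v_{\epsi,\delta(\epsi)}\|_{C^0(\Omclo)} \le 2\epsi,
$$
and letting $\epsi \to 0$ produces the desired approximating sequence in $C^{0,\alpha}_0(\Omclo)$.

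No step is a genuine obstacle: the only subtlety is making sure the truncation actually separates the support of $u_\epsi$ from $\partial\Om$ by a positive distance, which is exactly where the boundary condition $u|_{\partial\Om} = 0$ together with uniform continuity on the compact $\Omclo$ is used. Once this is in place, the mollification works verbatim without any regularity assumption on $\partial\Om$, so the lemma holds for an arbitrary bounded domain.
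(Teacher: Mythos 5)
Your argument is correct, and it proves the lemma by a genuinely different (though related) route than the paper. The paper reduces to the claim that $C^\infty_c(\Omclo)$ is dense in $C^0_0(\Omclo)$, and proves that by taking a compact exhaustion $\Om_n = \{x \in \Om : \operatorname{dist}(x,\partial\Om) > 1/n\}$, applying Stone--Weierstrass on the compact set $\clo{\Om_{n+1}}$ to obtain a $C^\infty$ approximant, and then multiplying by a cutoff supported between $\Om_n$ and $\Om_{n+1}$ to force compact support. You invert the order and change the tools: you first force compact support by the truncation $u_\epsi = \operatorname{sign}(u)\,\max\{|u|-\epsi,0\}$ (using $u|_{\partial\Om}=0$ and uniform continuity to see that $\{|u|\ge\epsi\}\subset\subset\Om$), and then gain smoothness by mollification. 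Both constructions land in $C^\infty_c(\Omclo)\subset C^{0,\alpha}_0(\Omclo)$ and give the same $O(\epsi)$ uniform error. Your version is arguably the more elementary, since it avoids Stone--Weierstrass entirely and uses only the standard convolution lemmas that the paper already develops in its appendix; the paper's version, conversely, needs Stone--Weierstrass but avoids the small technical check that the truncation $u_\epsi$ is continuous and that its support separates from $\partial\Om$. Your remark that no boundary regularity is used is also correct and applies equally to the paper's proof.
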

\begin{proof}
We have the inclusion $C^\infty_c (\Omclo) \subset C^{0,\alpha}_0 (\Omclo) \subset C^0_0(\Omclo)$. 
By a generalisation of the Stone-Weierstrass-Theorem $C^\infty_c(\Omclo)$ is dense in $C^0_0(\Omclo)$, see \ref{StoneWeierZero}
\end{proof}

\begin{theorem}[Operator semigroup] { \ } \label{EllipOpSemigroup} \\
Let $\Om$ be a bounded domain with $\Ctwoalphaf$-smooth boundary. $(L,D(L))$ the differential operator \eqref{defG}, $D(L)$ as in \eqref{defDL}.
Then the closure of the operator $(L,D(L))$ generates a strongly continuous semigroup on $C^0_0(\Omclo)$.

\end{theorem}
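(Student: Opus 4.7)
The plan is to apply the Lumer--Phillips theorem in its quasi-dissipative form: if an operator $A$ is densely defined, $A-\omega$ is dissipative for some $\omega\in\R$, and the range of $\lambda-A$ is dense for some $\lambda>\omega$, then $\overline{A}$ generates a strongly continuous semigroup. Setting $\omega:=\max(0,\sup_{\Om} c)\ge0$, the operator $L-\omega$ has nonpositive zeroth-order coefficient. I will verify the three hypotheses for $L-\omega$ and deduce the conclusion for $L$ by multiplying the resulting contraction semigroup by $e^{\omega t}$.

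Two of the three ingredients will be essentially immediate. For density of $D(L)$, observe that $C_c^\infty(\Om)\subset D(L)$, since any $\varphi\in C_c^\infty(\Om)$ vanishes together with $L\varphi$ in a neighbourhood of $\partial\Om$; a standard truncation-plus-mollification argument then shows $C_c^\infty(\Om)$ is dense in $\Czero{\Omclo}$. Dissipativity of $L-\omega$ is exactly the content of Lemma~\ref{Dissipative}, which applies since its zeroth-order coefficient $c-\omega$ is $\le0$.

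The substantive step is range density. I would fix $\mu:=\omega+1$ and, for arbitrary $f\in C^{0,\alpha}_0(\Omclo)$ (a dense subspace of $\Czero{\Omclo}$ by the immediately preceding lemma), seek $u\in D(L)$ with $(\mu-L)u=f$. This is equivalent to solving the Dirichlet problem
$$ (L-\mu)\,u = -f \ \text{in}\ \Om, \qquad u = 0 \ \text{on}\ \partial\Om. $$
The shifted operator $L-\mu$ has coefficients in $\Czeroalpha{\Omclo}$, retains uniform ellipticity with the original constant $\ellip$, and has zeroth-order coefficient $c-\mu\le-1<0$. Because $\partial\Om$ is $\Ctwoalphaf$-smooth, Theorem~\ref{SmoothBoundaryPortion} -- built on the full Perron--Schauder apparatus of Section~2 -- furnishes a solution $u\in\Ctwoalpha{\Omclo}$. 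The two defining conditions of $D(L)$ then follow by inspection: $u=0$ on $\partial\Om$ holds by construction, and on $\partial\Om$ one has $Lu=\mu u-f=0$ since both $u$ and $f$ vanish there. Hence $u\in D(L)$ and $(\mu-L)u=f$, so the range of $\mu-L$ contains $C^{0,\alpha}_0(\Omclo)$ and is therefore dense.

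The main obstacle is precisely this regularity-up-to-the-boundary for the resolvent equation, which is the reason $\partial\Om$ is assumed $\Ctwoalphaf$-smooth; once Theorem~\ref{SmoothBoundaryPortion} is invoked, the remaining functional-analytic bookkeeping is routine.
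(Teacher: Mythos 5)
Your proof is correct and follows essentially the same strategy as the paper: quasi-dissipativity of $L$ from Lemma~\ref{Dissipative}, range density via the Dirichlet theory of Section~2, and a rescaled Lumer--Phillips argument (the paper cites Theorem~\ref{TheoDissipOmega}). You are in fact a bit more careful than the paper's own text at two points. First, you restrict the right-hand side to $f\in C^{0,\alpha}_0(\Omclo)$, which is exactly what ensures $Lu=\mu u-f=0$ on $\partial\Om$ and hence $u\in D(L)$; the paper's proof takes $f\in\Czeroalpha{\Omclo}$ and asserts the image of $D(L)$ under $\lambda-L$ contains $\Czeroalpha{\Omclo}$, which cannot be literally true since that image consists of functions vanishing on $\partial\Om$ --- your version is the correct one. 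Second, you explicitly verify density of $D(L)$ in $C^0_0(\Omclo)$ via $C_c^\infty(\Om)\subset D(L)$, a hypothesis of the Lumer--Phillips/rescaling theorem that the paper never addresses. One small bookkeeping remark: Theorem~\ref{SmoothBoundaryPortion}, as stated, refers to ``the solution'' produced by the Perron construction \eqref{IntSolEq}, so invoking it implicitly relies on \ref{IntSol} for existence; the paper cites both, and your direct appeal to \ref{SmoothBoundaryPortion} is fine provided this dependency is understood.
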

\begin{proof}
Set $\omega := \supr{x \in \Om}{\ c(x)}$. By \ref{Dissipative} the operator $L - \omega$ is dissipative. Now consider for $\lambda > \omega$, $f \in \Czeroalpha{\Omclo}$ the resolvent equation multiplied with $-1$:
\begin{eqnarray}
 (L - \lambda) u = f \ \text{on} \ \Om \\
 u = 0 \ \text{on} \ \partial \Om \nonumber
\end{eqnarray}
This defines an elliptic equation as in \eqref{DirProblem} with  $\tilde{c} = c - \lambda \le 0$.
By \ref{IntSol} there exists a unique solution in $C^0_0(\Omclo) \cap \Ctwoalpha{\Om}$. By the smoothness of the boundary values and the boundary we have with \ref{SmoothBoundaryPortion} that $u \in \Ctwoalpha{\Omclo}$.
Hence the image of  $D(L) = \DL$ under $\lambda -L$ contains $\Czeroalpha{\Omclo}$ and is therefore dense in $C^0_0(\Omclo)$. 
By \ref{TheoDissipOmega} the closure generates a strongly continuous semigroup.
\end{proof}
\begin{explainText}
Thus assumption \ref{assSemi} is proved. Now we construct the continuation operator.
The operator is constructed locally and uses the symmetries of the differential operator. First the boundary is straighten out, which transforms also the differential operator. Then the symmetries of the resulting operator together with the boundary conditions are used to continue the function by a squeezed reflection in a certain direction. 
\end{explainText}
\begin{explainVor}
The standard continuation operators, see e.g. \cite{GilTru}, Lemma 6.3, p. 131  are not suitable, since they are in general not contractive. They continue a function outside a domain by a weighted sum of the function values inside the domain. These weights have different signs and the sup-norm of the continued function is in general strictly larger. The extension operator presented here overcomes this difficulty by doing a certain reflection, i.e. each function value outside corresponds to a function value inside, so the sup-norm stays the same.
To guarantee the smoothness of the continued function it is however necessary to restrict this operator to $\Ctwoalphaf$-functions with the additional boundary conditions $u=0$, $Lu=0$ as in \eqref{defG}.
Therefore the extension operator and the generator of the semigroup are closely related.
The construction of the extension operator starts locally. In order to define the extension the boundary has to be flatten out and the differential operator has to be transformed in a suitable form. For this we need the notation of a transformed differential operator (\ref{TrafoDiffOp}). The existence of a suitable transformation is given in Lemma (\ref{TrafoNoMix}). Then in Theorem (\ref{ExtenStraight}) the local extension operator is constructed. Finally in Theorem (\ref{Exten}) the global extension operator is constructed.
\end{explainVor}


\newcommand{\utwo}{u_2 \circ \floone}

%
\begin{definition}[The Pullbackoperator and the transformation of a differential operator] \label{TrafoDiffOp}
Let $\Om_1$, $\Om_2 \subset \R^n$ be two domains. $F : \Om_1 \rightarrow \Om_2$ a $C^{4,\alpha}$-diffeomorphism. This diffeomorphism induces a continuous isomorphism the \textit{Pullbackoperator}
\begin{eqnarray}T_F : C^{s,\sigma}(\Om_2) \rightarrow C^{s,\sigma}(\Om_1), \ \text{for} \ s+\sigma \leq 4+\alpha \label{pullbackOp} \\
T_F(g) = g \circ F \ , \ g \in C^{s,\sigma}(\Om_2) \nonumber
\end{eqnarray}

Let $L_1 : \Ctwoalpha{\Om_1} \rightarrow \Czeroalpha{\Om_1}$ be a differential operator of second order as in (\ref{defG}) then we define the $\textit{transformed differential operator}$ $L_2 : \Ctwoalpha{\Om_2} \rightarrow \Czeroalpha{\Om_2}$ by $ L_2 = (T_F)^{-1} \circ (L_1 \circ T_F) : \Ctwoalpha{\Om_2} \rightarrow \Czeroalpha{\Om_2}$. 
For $u_2 \in \Ctwoalpha{\Om_2}$ with $u_1 = u_2 \circ F$, $y_0 = F(x_0)$, $x_0 \in \Om_1$ we have:
$ (L_2 u_2)(y_0) = (L_1 u_1)(x_0)$. Using the chain rule, the derivatives up to second order w.r.t. to the original variable $x$ can be written in terms of the derivatives w.r.t. to the new variables $y$. For $i,j \in \{ 1,...,n \}$ we have:
$$ \pdx{i} \pdx{j} u_1 = \sum^{n}_{k,l=1} (\pdy{k} \pdy{l} u_2) (\pdx{i} F_k)(\pdx{j} F_l) + \pdx{i} \pdx{j} F_k \pdy{k} u_2. $$
So the drift coefficients of the transformed differential operator contain derivatives of second order of the diffeomorphism. If $F \in C^{4,\alpha}(\Om_1)$ then $\tilde{b}_i \in C^{2,\alpha}(\Om_2)$.
(See also \cite{GilTru} on page 91).

Hence there exist $\mtilde{a_ {ij}}$,$\mtilde{b_{i}} \in C^{2,\alpha}(\Om_2)$, $\mtilde{c} \in C^{0,\alpha}(\Om_2)$ such that:

$$ L_2 u_2 = \sumij \mtilde{a_{ij}} \pdy{i} \pdy{j} u_2 + \sumi \mtilde{b_i} \pdy{i} u_2 + \mtilde{c} u_2. $$ \label{trafoL2}
\end{definition}

\begin{diagram}
 \Om_1 & \rTo{F} & \Om_2 \\
 \Ctwoalpha{\Om_1} & \lTo{T_F} & \Ctwoalpha{\Om_2} \\
 \dTo^{L_1} &  & \dTo_{=: L_2} \\
 \Czeroalpha{\Om_1} & \rTo{T^{-1}_F} & \Czeroalpha{\Om_2} \\
\end{diagram}

\begin{diagram}
 x_0 & \rTo{F} & y_0 \\
 (u \circ F)(x_0) & \lTo{T_F} & u(y_0) \\
 \dTo^{L_1} &  & \dTo_{=: L_2} \\
 (L_1 u \circ F)(x_0) & \rTo{T^{-1}_F} & (L_2 u)(y_0) \\
\end{diagram}
\newcommand{\yz}{y^{(0)}}
\newcommand{\xz}{x^{(0)}}
\newcommand{\ainx}{a_{i,n}(x_0)}
\newcommand{\tildeainy}{\mtilde{a_{i,n}(y_0)}}
\newcommand{\annx}{a_{n,n}(x_0)}
\newcommand{\tildeanny}{\mtilde{a_{n,n}(y_0)}}

\begin{lemma}{Transformation to an operator with no mixed derivatives} \label{TrafoNoMix}
\\Let $R > 0$, $\Om_1 = B_R(0) \subset \R^n$, $L_1$ the differential operator \eqref{defG}) acting on  $C^{2,\alpha}(\BRplus)$ with coefficients 
$$a_{ij}, b_i \in C^{2,\alpha}(B^{0,+}_R(0)) \ \text{for} \ i,j \le n$$
For a point $x \in \R^n$ denote by $x^{(n-1)}$ the vector of the first (n-1) components and by $x_n$ the n-th component.
Then there exists a $\Ctwoalphaf$-smooth map $F : \Om_1  \rightarrow F(\Om_1)$
with the following properties: 
\begin{enumerate}
\item For $x \in \Om_1$ with $x_n > 0$ it holds $(F(x))_n > 0$
\item For $x \in \Om_1$ with $x_n = 0$ it holds $(F(x))_n = 0$
\item There exists a $0<R'\le R$ such that the restriction $\floone = F \lvert_{B_{R'}(0)}$ is an $C^{2,\alpha}$-isomorphism, i.e. $\floone : B_{R'}(0) \isomorph  \floone(B_{R'}(0))$  
\item The transformed differential operator $L_2 = T^{-1}_{\floone} \circ  L_1 \circ T_{\floone}$, has no crossterms of second order for points in $\BRzero$. That is for $y = F(x)$ with $x_n=0$ it holds $y_n =0$ and
$\mtilde{a_{in}} (y) = 0$ for $1 \le i < n$, where $\mtilde{a_{in}}$ denote the crossterms of $L_2$.
\end{enumerate}
\end{lemma}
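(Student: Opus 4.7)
The plan is to look for $F$ in the explicit ``shear'' form
\[ F(\firstn,x_n) \ = \ \bigl(\firstn + g(\firstn)\,x_n,\ x_n\bigr), \]
with $g=(g_1,\ldots,g_{n-1}):\BRzero\to\R^{n-1}$ to be chosen. With this ansatz properties (i) and (ii) of the lemma are automatic, since $F_n(x)=x_n$.

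To force property (iv), compute the Jacobian at a boundary point $x=(\firstn,0)$: $\pdx{j} F_k = \delta_{jk}$ for $j,k<n$, $\pdx{n} F_k = g_k(\firstn)$ for $k<n$, $\pdx{j} F_n = 0$ for $j<n$, and $\pdx{n} F_n = 1$. Plugging into
\[ \mtilde{a_{kl}}(y) \ = \ \sumij a_{ij}(x)\,\pdx{i} F_k\,\pdx{j} F_l \]
with $k<n$ and $l=n$ collapses the double sum to
\[ \mtilde{a_{kn}}(F(x)) \ = \ a_{kn}(x) + a_{nn}(x)\,g_k(\firstn). \]
Uniform ellipticity gives $a_{nn}\ge\ellip>0$, so one can set
\[ g_k(\firstn) \ := \ -\frac{a_{kn}(\firstn,0)}{a_{nn}(\firstn,0)}, \qquad k=1,\ldots,n-1, \]
and the crossterms on $\{y_n=0\}$ vanish by construction. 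Since $a_{kn}(\cdot,0),\,a_{nn}(\cdot,0) \in \Ctwoalpha{\BRzero}$ and $a_{nn}$ is bounded away from zero, each $g_k$ lies in $\Ctwoalpha{\BRzero}$, hence $F \in \Ctwoalpha{\BR}$.

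It remains to verify (iii). At the origin $DF(0)$ is block upper triangular with $I_{n-1}$ in the upper-left block, the column $g(0)$ on the right, and final row $(0,\ldots,0,1)$, so $\det DF(0)=1$. The $\Ctwoalphaf$-version of the inverse function theorem then furnishes some $R'\in(0,R]$ for which $\floone:=F|_{B_{R'}(0)}$ is a $\Ctwoalphaf$-isomorphism onto its image. The only genuinely non-trivial point is the choice of $g$ in the second step; preserving $\Ctwoalphaf$-regularity of the quotient $a_{kn}/a_{nn}$ uses ellipticity in an essential way through the strict lower bound on $a_{nn}$, while the local invertibility and the verification of (i), (ii), (iv) are immediate consequences of the ansatz.
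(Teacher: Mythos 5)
Your proposal is correct and follows essentially the same route as the paper: the same shear ansatz $F(\firstn,x_n)=(\firstn+g(\firstn)x_n,\,x_n)$, the same choice $g_k=-a_{kn}/a_{nn}$ on $\{x_n=0\}$, the same use of uniform ellipticity to keep $a_{nn}$ bounded below and hence $g_k\in\Ctwoalpha{\BRzero}$, and the same invocation of $\det DF(0)=1$ plus the $\Ctwoalphaf$-inverse function theorem for property (iii). The only cosmetic difference is that you read off the transformed crossterm directly from the tensor formula $\mtilde{a_{kl}}=\sumij a_{ij}\,\pdx{i}F_k\,\pdx{j}F_l$ rather than expanding $\pdx{i}\pdx{n}$ and $\pdx{n}\pdx{n}$ term by term as the paper does, but the identity $\mtilde{a_{kn}}=a_{kn}+a_{nn}g_k$ on $\{x_n=0\}$ is the same in both.
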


\begin{proof}

Denote by $\firstnx$ the first $(n-1)$ coordinates of a point $x$.
For $1\le i < n$ define
$$g_i : \Om_1 \rightarrow \R , \ g_i(x) = g_i(\firstnx) = - \frac{a_{in}(\firstnx,0)}{a_{nn}(\firstnx,0)}$$
$g_i$ is a $\Ctwoalpha{\clo{\Om_1}}$-function because the coefficients are assumed to be in $\Ctwoalpha{\clo{\Om^+_1}}$ and since $L_1$ is uniformly elliptic $a_{nn} > 0$. Define $g : \Om_1 \rightarrow \R^{n-1}$ by $(g(x))_i =g_i(x)$. Define

$$ F : \Om_1 \rightarrow \R^n ,\ F(x) = (\firstn + g(x)x_n,x_n) $$
Denote by $\Om_2 = F(\Om_1)$. 
$$ (D F)(x) = 
\Big{(} \begin{array}{ll}
Id +(D g(x))x_n & g(x) \\ 
0 & 1
                        \end{array} \Big{)}  \ \text{for all } x \ \in \Om_1
$$
Now for $x=0$, it holds $ Det (D \floone)(0) = 1$.
By the Inverse-Function theorem \ref{TheoDiff} $F$ is a local $\Ctwoalphaf$-diffeomorphism in a neighborhood of $0$.  
Hence there exists a $R'>0$ with $R' \le R$ such that $\floone:=F \lvert_{B_{R'}(0)}$ is a diffeomorphism. Now substitute $R$ by $R'$ and denote by $\Om_1 = \BR$,$\Om_2=\floone(\Om_1)$.
In order to calculate the coefficients of the transformed differential operator $L_2$ we have to write the partial derivatives in the original variable $x$ in term of derivatives in the new variable $y$.
By the chain rule we have $\nabla_x =\nabla_y (D\floone)$ in the sense that $(\nabla_x u_1)(x_0) =(\nabla_y u_2)(y_0) (D\floone)(x_0)$ for $u_2 \in \Ctwo{\Om_2}$, $u_1 = u_2 \circ \floone$. For convenience we use this notation in the following, i.e. we identify the transformed differential operator and the differential operator itself. 

\begin{scherz}
 \textit{Dafür brauchen sie nur einen Bleistift. Einen spitzen Bleitstift.} (Freeden)
\end{scherz}

\begin{align*}
 \pdx{i} &= \pdy{i} + x_n \sumfromtoj{1}{n-1} (Dg)_{ji} \pdy{j}  \\
 \pdx{n} &= \pdy{n} + \sumfromtoj{1}{n-1} g_j(\firstn) \pdy{j}  \\
\pdx{i}\pdx{j}&=(\pdy{i} + x_n \sumfromtoj{1}{n-1} (Dg)_{ji} \pdy{j}  )(\pdy{j} + x_n \mysum{k}{n-1} (Dg)_{kj} \pdy{k}) \\
\pdx{i}\pdx{n} &= (\pdy{i} + x_n \sumfromtoj{1}{n-1} (Dg)_{ji} \pdy{j} )(\pdy{n}+\sumfromtoj{1}{n-1} g_j(\firstn) \pdy{j}) \\
& = \pdy{i}\pdy{n} + x_n(\sumfromtoj{1}{n-1} (Dg)_{ji} \pdy{j}\pdy{n}  + P_{(n-1)}) \\
\pdx{n}\pdx{n} &= (\pdy{n} +g^\top( \nabla_y^{(n-1)} )^\top )(\pdy{n}+g^\top( \nabla_y^{(n-1)} )^\top) \\
&= \pdy{n}\pdy{n} + 2g^\top(\pdy{n} \nabla_y^{(n-1)} )^\top + P_{n-1}
\end{align*}
where $P_{(n-1)}$ denotes (different) terms only depending on partial derivatives of first or second order w.r.t to the coordinates $1$ to $n-1$.
For $x_0$ with $x_n = 0$ we have: 
\begin{align*} \pdx{i}\pdx{n} &=  \pdy{i}\pdy{n} + P_{(n-1)} \\
 \pdx{n}\pdx{n} &= \pdy{n}\pdy{n} + 2 \sumfromto{1}{n-1} g_i(x_0) \pdy{i}\pdy{n} \\
\end{align*}
Plugging this in the definition of $L_1$:
\begin{align*} L_1 &= P_{(n-1)} +  2 a_{in} \pdx{i}\pdx{n} + a_{nn} \pdx{n}\pdx{n} & \notag \\ 
&= P_{(n-1)} + 2 a_{in} (\pdy{i}\pdy{n} + P_{(n-1)}) + a_{nn} (\pdy{n}\pdy{n} + 2 \sumfromto{1}{n-1} g_i(x_0) \pdy{i}\pdy{n}) & \notag \\
&= P_{(n-1)} + 2 (a_{in} + a_{nn} g_i(x_0)) \pdy{i}\pdy{n} + a_{nn} \pdy{n}\pdy{n} & \notag 
\end{align*}
By the definition of $g$ we have for every $x_0 \in \BRzero$:
\begin{align*}
a_{in}(x_0) + a_{nn} g_i(x_0) &= (a_{in} +a_{nn}(- \frac{a_{in}}{a_{nn}}))(x_0)=0 \\
\intertext{Thus:}
\mtilde{a_{in}}(y_0) &= a_{in}(x_0) + g_i(x_0) a_{nn}(x_0) = 0 \\
\end{align*}

\end{proof}

\newcommand{\uextend}{- u (\firstn,-(x_n+ex_n^2))}
\newcommand{\Flotwo}{F^2}
\newcommand{\exfunc}{-(x_n+ex_n^2)}

\newcommand{\bezDL}{S}

\newcommand{\Flotwoab}{F_{a,b}^{(2)}}

\begin{lemma}[Construction of the reflection function] \label{ReflFunc}
Let $R>0$, $a,b \in \R$, $a > 0$. Then there exists a $\delta>0$ and a twice differentiable function $\Flotwoab :\R \rightarrow \R $ which maps the interval $(-\delta,0]$ to $[0,R)$ and 
\begin{eqnarray} 
 \Flotwoab(0)=0 \nonumber \\
 (\Flotwoab)^{'} (0) = -1 \label{derivative} \\
 (\Flotwoab)^{''} (0) = 2 \frac{b}{a} \nonumber 
\end{eqnarray}
Furthermore the largest possible $\delta$ depends montone increasing on $a$ and decreasing on $b$.
Denote by $F : \R^+ \times \R \times \R \rightarrow \R$ the function $F(a,b,x) =\Flotwoab(x)$.
Then for parameters $a$,$b$ from subsets $A=[\ellip,\infty)$, $B=[-\coeffbound,\coeffbound]$, $\ellip>0$,$0<\coeffbound<\infty$ there exists a common $\delta>0$ such that
$F( A \times B \times (-\delta,0]) \subset [0,R)$.
And the function $F(a,b,x) = F_{a,b} (x)$ depends $C^\infty$-smooth on $a$,$b$ for $a>0$.
\end{lemma}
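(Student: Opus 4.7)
The natural candidate is the second-order Taylor polynomial dictated by the three prescribed values at zero, namely
\begin{equation*}
\Flotwoab(x) := -x + \frac{b}{a} x^{2}.
\end{equation*}
The three conditions \eqref{derivative} are immediate: $\Flotwoab(0)=0$, $(\Flotwoab)'(x) = -1 + 2\tfrac{b}{a}x$ gives $-1$ at $x=0$, and $(\Flotwoab)''\equiv 2\tfrac{b}{a}$. Smoothness of $F(a,b,x) = -x + \tfrac{b}{a}x^2$ in $(a,b)$ on $\{a>0\}\times\R$ is obvious since it is a rational function in $(a,b)$ with nonvanishing denominator.

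It remains to choose $\delta>0$ so that $\Flotwoab\bigl((-\delta,0]\bigr)\subset [0,R)$. Factoring,
\begin{equation*}
\Flotwoab(x) = -x\Bigl(1 - \frac{b}{a}x\Bigr).
\end{equation*}
For $x\in(-\delta,0)$ the factor $-x$ is positive, and the factor $1-\tfrac{b}{a}x$ is positive provided $|x|<a/|b|$ when $b<0$ (and unconditionally when $b\ge 0$); hence $\Flotwoab(x)>0$ for small $|x|$. For the upper bound we estimate
\begin{equation*}
|\Flotwoab(x)| \le |x| + \frac{|b|}{a}\,x^{2} \le \delta + \frac{|b|}{a}\,\delta^{2},
\end{equation*}
so it suffices to pick $\delta$ small enough that the right-hand side is $<R$ and that $|b/a|\,\delta<1$. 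Both conditions hold as soon as $\delta\le \delta_0(a,b,R)$ for some positive quantity $\delta_0$ that is monotone increasing in $a$ and monotone decreasing in $|b|$, which is the claimed dependence.

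For the uniform statement on $A\times B = [\ellip,\infty)\times[-\coeffbound,\coeffbound]$, observe that $|b/a|\le \coeffbound/\ellip$ uniformly on this set, so the two smallness conditions above become
\begin{equation*}
\delta + \frac{\coeffbound}{\ellip}\,\delta^{2} < R, \qquad \frac{\coeffbound}{\ellip}\,\delta < 1,
\end{equation*}
and both can be fulfilled by a single $\delta>0$ depending only on $R$, $\ellip$, $\coeffbound$. With this choice $F(A\times B\times(-\delta,0])\subset[0,R)$, which completes the construction. There is no real obstacle here; the only point requiring a little care is separating the two cases in the sign analysis of the factor $1-\tfrac{b}{a}x$, which is handled once and for all by the uniform bound $|b/a|\le\coeffbound/\ellip$.
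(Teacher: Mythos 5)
Your proof is correct and uses exactly the same quadratic $\Flotwoab(x) = -x + \tfrac{b}{a}x^2$ as the paper, but the route to the mapping property $\Flotwoab\bigl((-\delta,0]\bigr)\subset[0,R)$ and the uniform $\delta$ is genuinely different and, in fact, cleaner. The paper splits into cases by the sign of $b$: for $b<0$ it uses the crude bound $\Flotwoab(s)<-s$ together with the negative zero $a/b$ to conclude $\delta<\min\{|a/b|,R\}$, while for $b>0$ it invokes the implicit function theorem on the negative zero $s_R$ of $\Flotwoab - R$ to obtain the sign of $\partial_a s_R$ and $\partial_b s_R$, and hence the monotone dependence of $\delta$ on the parameters. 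You replace the whole case analysis and the implicit function theorem by a single elementary inequality $|\Flotwoab(x)|\le|x|+\tfrac{|b|}{a}x^2\le\delta+\tfrac{|b|}{a}\delta^2$ together with the positivity condition $\tfrac{|b|}{a}\delta<1$; both constraints depend only on the ratio $|b|/a$, so the uniform statement over $A\times B$ follows at once from $|b|/a\le\coeffbound/\ellip$. This buys you a shorter argument, an explicit sufficient choice of $\delta$, and it avoids the somewhat delicate monotonicity-in-$b$ claim (your monotone dependence is in $|b|$, which is the version that is actually correct and what the uniform bound over $[-\coeffbound,\coeffbound]$ really uses); what it does not give you is the precise location of $s_R$ and the derivative information $\partial_a s_R<0$, $\partial_b s_R>0$, which the paper records but never uses beyond the uniform bound.
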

\begin{proof}
For $s \in \R$ define $$\Flotwoab(s) := - (s - \frac{b}{a} s^2)$$
If $b \neq 0$ then $\Flotwoab(s) = \frac{b}{a} s (s - \frac{a}{b})$, if $b=0$ then $\Flotwoab(s) = -s$. \newline
Obviously \ref{derivative} is fulfilled and for $a \ne 0$ the function is $C^\infty$-smooth in the coefficients.
If $b=0$ then $\Flotwoab$ is a linear function and maps $(-R,0]$ to $[0,R)$. 
If $b<0$ then the function is positive between the negative zero $\frac{a}{b}$ and $0$. 
If $b>0$ then the function is positive for all negative $s$. 
The choice of $\delta$ depends therefore on the negative zero of $\Flotwoab(s)$ if $b<0$ and on the negative zero of $\Flotwoab(s) - R$ if $b>0$.
The second zero of $\Flotwoab$ is given by $s_2=\frac{a}{b}$.
If $b<0$ then $\Flotwoab(s) \geq 0$ for $s \in [s_2,0]$ and $\Flotwoab(s) < -s$ for $s<0$.
\\ Hence for $\delta < \rm{min} \{| \frac{a}{b} |,R\}$ it holds $\Flotwoab((-\delta,0]) \subset [0,R)$.

If $b\geq0$ then $\Flotwoab(s) >0$ for all $s < 0$. Let $s_R$ be the negative zero of the function $G(a,b,s,R) = F(a,b,s) -R$, then for $\delta < |s_R|$ it holds $\Flotwoab((-\delta,0]) \subset [0,R)$. Using the implicit function theorem one can see, that the zero $s_R = s_R(a,b)$ depends differentiable on $a$ and $b$ and that $\partial_a s_R < 0$ and $\partial_b s_R > 0$ for $a \in A$,$b \in B$. 
Thus $|s_2|$ gets larger if $a$ gets larger or if $b$ gets smaller. If the negative zero gets smaller than the possible choice for $\delta$ gets larger. Thus, for $a \in [\ellip,\infty)$, $b \in [-\coeffbound,\coeffbound]$ it holds $\delta(a,b) \geq \delta(\ellip,\coeffbound) =: \delta_0$ and thus $$F (\{a\} \times \{b\} \times  (-\delta_0,0])  \subset F (\{a\} \times \{b\} \times (-\delta(a,b),0] )\subset [0,R)$$ and $$F (A \times B \times (-\delta,0]) \subset [0,R)$$

 \end{proof}

\newcommand{\pds}{\partial_s}
\newcommand{\pdsplus}{\partial^+_s}
\newcommand{\pdsminus}{\partial^-_s}

\begin{lemma} [Extension operator in 1 dimension] \label{Exten1dim}
 Let $a >0$, $b \in \R$. Then there exists a continuous linear extension operator $E$ from the Banach space 
$$X_1 := \{ u \in C_c^{2,\alpha}(\R^+_0) \ | \ a \pds \pds u(0) + b \pds u(0) = u(0) = 0 \ , \supp(u) \subset [0,C) \text{for some} \ C > 0 \} $$ \label{Labrel}
into $\Ctwoalphac{\R}$ such that $\norm{Eu}{C_c^0(\R)} = \norm{u}{C_c^0(R^+)}$ and $\text{supp}(Eu) \subset [-\delta,\infty)$ for some $\delta>0$.
\end{lemma}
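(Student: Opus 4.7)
The plan is to extend $u$ across $0$ by a squeezed reflection through $\Flotwoab$ from Lemma~\ref{ReflFunc}, damped by a smooth cutoff so that $Eu$ has compact support in $[-\delta,\infty)$. The point of the normalisations $\Flotwoab(0)=0$, $(\Flotwoab)'(0)=-1$, $(\Flotwoab)''(0)=2b/a$ in Lemma~\ref{ReflFunc} is precisely that the resulting matching of the second derivative at $s=0$ reduces to $a\,\pds\pds u(0)+b\,\pds u(0)=0$, which is exactly the defining relation in $X_1$.

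Concretely, I would first apply Lemma~\ref{ReflFunc} with the given $a,b$ to obtain $\delta_0>0$ and the map $\Flotwoab$ with $\Flotwoab([-\delta_0,0])\subset[0,\infty)$. I would then fix once and for all a cutoff $\eta\in C^\infty(\R)$ with $\eta\equiv 1$ on $[-\delta_0/2,\infty)$ and $\eta\equiv 0$ on $(-\infty,-\delta_0]$, and define
\[
(Eu)(s) := \begin{cases} u(s), & s\geq 0, \\ -\eta(s)\,u\bigl(\Flotwoab(s)\bigr), & s<0. \end{cases}
\]
Linearity is obvious; $\supp(Eu)\subset[-\delta_0,\infty)$ by construction; and $Eu$ is compactly supported because $u$ is. Away from $s=0$, $Eu$ is $\Ctwoalphaf$ as a product and composition of $\Ctwoalphaf$ (in fact $C^\infty$) functions.

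The one step that requires real care is the regularity at the gluing point $s=0$, and it is the main obstacle. Since $\eta\equiv 1$ near $0$, on the negative side $Eu$ locally coincides with $-u\circ\Flotwoab$. Using $u(0)=0$ and the chain rule with the prescribed values of $\Flotwoab,(\Flotwoab)',(\Flotwoab)''$ at $0$, one computes
\[
(Eu)(0^-)=0,\qquad (Eu)'(0^-)=u'(0),\qquad (Eu)''(0^-)=-u''(0)-\tfrac{2b}{a}\,u'(0).
\]
Equality of the second one-sided derivatives $(Eu)''(0^-)=(Eu)''(0^+)=u''(0)$ is therefore equivalent to $a\,u''(0)+b\,u'(0)=0$, which holds by definition of $X_1$. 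The Hölder condition on $(Eu)''$ across $0$ then follows by splitting any difference quotient whose endpoints lie on opposite sides of $0$ through the point $0$ and applying the one-sided $\Czeroalphaf$ seminorms on each half-line.

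Finally, the sup-norm identity $\norm{Eu}{C^0_c(\R)}=\norm{u}{C^0_c(\R^+)}$ is immediate: $|\eta|\leq 1$ and $\Flotwoab([-\delta_0,0])\subset[0,\infty)$ force $|(Eu)(s)|\leq\norm{u}{C^0_c(\R^+)}$ for $s<0$, while equality on the positive side is trivial. Continuity $E:X_1\to C^{2,\alpha}_c(\R)$ then follows from routine chain- and product-rule estimates, with a constant depending only on the fixed data $\eta$ and $\Flotwoab$ (both of finite $\Ctwoalphaf$ norm on the compact interval $[-\delta_0,0]$) and not on $u$.
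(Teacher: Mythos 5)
Your proposal is correct and follows essentially the same route as the paper: reflect across $0$ via $\Flotwoab$, match the one-sided first and second derivatives at the gluing point using the boundary condition $a\,\pds\pds u(0)+b\,\pds u(0)=0$, check the Hölder seminorm by the standard split-at-$0$ argument, and cut off to get compact support. The only cosmetic differences are that the paper applies the cutoff after extending (rather than building it into the definition) and deduces $\Ctwoalphaf$-continuity of $E$ via the closed-graph argument of Theorem~\ref{ContLinOp} rather than by a direct norm estimate, which the paper itself notes in Remark~\ref{remCont} could also be done explicitly.
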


\begin{proof}
 Let $F = \Flotwoab$, $\delta = \delta(a,b)$ as in \ref{ReflFunc}.  Define 
$$
(Eu)(s) = 
\begin{cases}
 - u(F(s)) & - \delta < s < 0  \\
 u(s) & s \geq 0
\end{cases}
$$ \newline
Since $F$ maps $(-\delta,0)$ into $\R^+$ the operator is welldefined. Furthermore for $s>0$ or $s<0$ the function $Eu$ is a composition of $\Ctwoalphaf$-smooth functions and therefore itself $\Ctwoalphaf$-smooth for $s \ne 0$. It is left to show $\Ctwoalphaf$-smoothness for $s=0$. 
Since $u \in \Ctwoalpha{\R^+_0}$ we have by \ref{DiffBoundary} that the one sided derivative $\pds^+ u (0)$,$\pds^+ \pds^+ u (0)$ exist and coincide with the corresponding continuous continuation of the derivative in the interior. Hence the boundary conditions imply:
\begin{align}
 (a \pdsplus \pdsplus u + b \pdsplus u)(0) &= 0  \label{DerPdsplus} \\
\intertext{For $s_0<0$: }
(\pds Eu) (s_0) &= \pds u (F(s_0))(F'(s_0)) \notag \\
(\pds \pds Eu) (s_0) &= \pds \pds u(F(s_0))(F'(s_0))^2 + \pds u (F(s_0))(2 \frac{b}{a}) \notag 
\end{align}
Note that the function $F$ is monoton decreasing near $0$ thus a lower derivative turns into an upper derivative: 
$$\pdsminus (Eu) (0) =- \pdsminus (u\circ F) (0) =- (\pdsplus u(F(0))) F'(0) \overset{\ast}{=} \pdsplus  u(0) = \pdsplus(Eu)(0)$$
the equality $\ast$ follows by $F(0) =0$, $F'(0) = -1$.

\begin{align} \pdsminus \pds (Eu) (0) &= - \pdsminus (\pdsminus (Eu))(0) \\
&= -\pdsminus (((\pdsplus u) \circ F)F')(0) \notag \\
&= -(\pdsplus \pdsplus u)(F')^2 (0) - ((\pdsplus u))(2\frac{b}{a}) (0) \notag \\
&= - \pdsplus \pdsplus u(0) - \pdsplus u (2\frac{b}{a})(0) \notag
\intertext{By \ref{DerPdsplus} we have $ b \pdsplus u (0) = -a \pdsplus \pdsplus u (0)$ and thus}
&= (- \pdsplus \pdsplus u + 2 \pdsplus \pdsplus u) (0) = \pdsplus \pdsplus u (0) \notag
\end{align}
Hence $\pdsminus (Eu) (0) = \pdsplus (Eu) (0)$ and $\pdsminus \pdsminus (Eu) (0) = \pdsplus \pdsplus (Eu) (0)$. The second derivative is also in $\Czeroalpha{(-\delta,\infty)}$, since it is Hölder continuous in both subintervals $(-\delta,0]$ and $[0,\infty)$.
Thus $Eu$ defines a function in $\Ctwoalpha{(-\delta,0] \cup \R^+)}$. Choose now a cutoff $\eta$ for $\R^+$ and $(-\delta,0]$ and define finally:
$$ E_1 u = \eta Eu $$ 
then $E_1 : X_1 \rightarrow \Ctwoalphac{\R}$ and obviously $\norm{u}{C_c^0(\R)} = \norm{u}{C_c^0(\R^+)}$.
$E_1$ is a linear operator from the Banach space $(X_1,\norm{\cdot}{\Ctwoalphac{\R_0^+}})$ to $(\Ctwoalphac{\R},\norm{\cdot}{\Ctwoalphac{\R}})$ and continuous from $(X_1,\norm{\cdot}{C^0_0(\R)})$ to $(\Ctwoalphac{\R},\norm{\cdot}{C^0_c(\R)})$. By \ref{ContLinOp} $E_1$ is therefore also continuous from $(X_1,\norm{\cdot}{\Ctwoalphac{\R_0^+}})$ to $(\Ctwoalphac{\R},\norm{\cdot}{\Ctwoalphac{\R}})$.  
\end{proof}

\begin{remark} \label{remCont}
The $\Ctwoalphaf$-norm of $E_1 u$ can be explicitly estimated by the $\Ctwoalphaf$-norm of $u$, the reflecting function $\Flotwoab$ and the cutoff $\eta$.
\end{remark}

\def\Qn{Q^{(n-1)}}
\begin{theorem} [Extension operator on straight boundary] { \ } \label{ExtenStraight} \\
Let $R>0$, $Q = [-R,R]^n$, $Q^{0,+} = [-R,R]^{n-1} \times [0,R]$.
Let $L$ be the elliptic differential operator \eqref{defG} with coefficients $a_{ij}$,$b_i \in C^{2,\alpha}(Q^{0,+})$. Assume furthermore, that $L$ has no crossterms, i.e. for $x \in Q$ with $x_n=0$ it holds  $a_{in}(x) = 0$ for $i \le n$.  Denote by $\ellip$ and $\coeffbound$ the ellipticity constant and the bound of the coefficients respectively. Then there exists a linear (continuous) extension operator $E$ from the space 
$$ X_1 := \{ u \ \in C^{2,\alpha}(Q^{0,+}) \ | \ u(x) = (Lu)(x) = 0 \ \text{for all} \ x \ \text{with}  \ x_n=0  \} $$
$$ E: X_1  \longrightarrow \Ctwoalpha{Q} $$
such that $\norm{Eu}{C^0(Q)} = \norm{u}{C^0(Q^{0,+})}$
\end{theorem}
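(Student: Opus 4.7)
The plan is to apply the one-dimensional extension from Lemma \ref{Exten1dim} fiberwise in the $x_n$-direction, with parameters $a,b$ depending on the tangential coordinate $x^{(n-1)}$. For each $x^{(n-1)} \in [-R,R]^{n-1}$ set
\[
 a(x^{(n-1)}) := a_{nn}(x^{(n-1)},0), \qquad b(x^{(n-1)}) := b_n(x^{(n-1)},0).
\]
Uniform ellipticity gives $a(x^{(n-1)}) \geq \ellip > 0$ and the coefficient bound gives $|b(x^{(n-1)})| \leq \coeffbound$, so Lemma \ref{ReflFunc} supplies a common $\delta_0 > 0$ and a family of reflecting polynomials $F_{a,b}(s) = -s + \tfrac{b}{a}s^2$ mapping $(-\delta_0,0]$ into $[0,R)$ uniformly in $(a,b)$. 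The candidate extension is
\[
 (\tilde E u)(y) := \begin{cases} u(y), & y_n \geq 0, \\ -\,u\bigl(y^{(n-1)},\,F_{a(y^{(n-1)}),\,b(y^{(n-1)})}(y_n)\bigr), & -\delta_0 < y_n < 0, \end{cases}
\]
cut off by a smooth $\eta$ adapted to $Q$ so that $E u := \eta \tilde E u \in \Ctwoalpha{Q}$.

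The reason the fiberwise hypothesis of Lemma \ref{Exten1dim} is satisfied is exactly the no-crossterm assumption. Since $u = 0$ on $\{x_n = 0\}$, all tangential derivatives of $u$ vanish there; combined with $a_{in}(x) = 0$ for $i < n$ when $x_n = 0$, only the $(n,n)$ principal coefficient and the $n$-th drift term contribute to $Lu$ at the boundary, so $Lu \equiv 0$ on $\{x_n = 0\}$ reduces to
\[
 a(x^{(n-1)})\,\partial_n^2 u(x^{(n-1)},0) + b(x^{(n-1)})\,\partial_n u(x^{(n-1)},0) = 0,
\]
which is precisely the scalar relation defining the admissible space in Lemma \ref{Exten1dim}. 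The smoothness $a_{ij}, b_i \in \Ctwoalpha{Q^{0,+}}$ ensures $a(\cdot), b(\cdot) \in \Ctwoalpha{[-R,R]^{n-1}}$, and Lemma \ref{ReflFunc} gives $C^\infty$-smoothness of $F_{a,b}$ in the parameters, so $\tilde E u$ is $\Ctwoalphaf$-smooth on the open half-slabs $\{y_n > 0\}$ and $\{y_n < 0\}$ separately.

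The main obstacle is matching $\tilde E u$ and its derivatives of order $\leq 2$ across the interface $\{y_n = 0\}$. The key identities at $s = 0$ are $F_{a,b}(0) = 0$, $\partial_s F_{a,b}(0) = -1$, $\partial_s^2 F_{a,b}(0) = 2b/a$, together with $\partial_a F_{a,b} = -\tfrac{b}{a^2} s^2$ and $\partial_b F_{a,b} = \tfrac{1}{a} s^2$, which vanish to second order in $s$. Writing $\phi(y) := F(a(y^{(n-1)}),b(y^{(n-1)}),y_n)$ and applying the chain rule, one verifies: (i) purely tangential derivatives up to order two match, both sides being zero since $u$ and its tangential derivatives vanish on $\{x_n = 0\}$; (ii) the normal derivative matches because $\partial_s F(0)=-1$ cancels the leading minus sign in the definition of $\tilde E u$; (iii) the mixed derivatives $\partial_i \partial_n$ match because every $y^{(n-1)}$-derivative of $\phi$ is $O(y_n^2)$ at $y_n = 0$, so only the clean $\partial_s F(0) = -1$ contribution survives; (iv) the matching of $\partial_n^2 \tilde E u$ is exactly the boundary relation $a \partial_n^2 u + b \partial_n u = 0$. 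Hölder continuity of $D^2 \tilde E u$ across $\{y_n = 0\}$ is then inherited from each closed half-slab since the interface values coincide. Linearity is immediate; norm equality $\|Eu\|_{C^0(Q)} = \|u\|_{C^0(Q^{0,+})}$ holds because for $y_n < 0$ the value of $\tilde E u(y)$ is up to sign a value $u$ attains on $Q^{0,+}$; and continuity $E : X_1 \to \Ctwoalpha{Q}$ follows from the same closed-graph/interpolation argument used at the end of the proof of Lemma \ref{Exten1dim}, together with the explicit estimate indicated in Remark \ref{remCont}.
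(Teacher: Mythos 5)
Your proposal is correct and follows essentially the same route as the paper: both define the extension fiberwise via $F_{a_{nn}(x^{(n-1)},0),\,b_n(x^{(n-1)},0)}(x_n)$ from Lemma \ref{ReflFunc}, use the no-crossterm hypothesis to reduce $Lu=0$ on $\{x_n=0\}$ to the scalar relation $a\partial_n^2 u + b\partial_n u = 0$ entering Lemma \ref{Exten1dim}, match tangential, normal and mixed derivatives across the interface via the chain rule and the $O(x_n^2)$ vanishing of the tangential derivatives of the reflecting map, and then cut off and invoke Theorem \ref{ContLinOp} for continuity. Your explicit tracking of $\partial_a F$, $\partial_b F$ makes the mixed-derivative matching slightly more transparent than the paper's calculation, but the underlying argument is the same.
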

\begin{proof}
Denote by $\Qn = [-R,R]^{n-1}$.
For $x =(\firstnx,x_n) \in Q$ define $$F(x) = F(a_{nn}(\firstnx,0),b_n(\firstnx,0),x_n)$$
where $F$ is the reflecting function from \ref{ReflFunc}. 
Set $\delta := \delta(\ellip,\coeffbound)$. Then $\delta > 0$ and $\delta \leq \delta(a_{nn}(x),b_{n}(x))$ with $\delta(a,b)$ as in \ref{ReflFunc}. Therefore $F(x)$ maps elements from $\Qn \times (-\delta,0]$ into $\Qn \times [0,R)$.
Define 
$$
(Eu)(x) = 
\begin{cases}
- u(\firstnx,F(x)) & - \delta < x_n < 0  \\
 u(x) & x_n \geq 0
\end{cases}
$$
As in the 1-dimensional case $\norm{Eu}{C^0(Q)} = \norm{u}{C^0(Q^{0,+})}$.
For $x_n > 0$ this function is $\Ctwoalphaf$. For $x_n<0$ $(Eu) = u \circ F \circ (a_{nn} (x),b_{n}(x),x)$. Since $F$ is $C^\infty$-smooth and the coefficients are $\Ctwoalphaf$ $(Eu)$ is also $\Ctwoalphaf$.
By \ref{DiffBoundary} the partial derivatives $(\pdx{i} u)(x_0)$, $(\pdx{i} \pdx{j} u)(x_0)$ for $x_0 \in Q_0$ and $i,j < n$ exist and furthermore the one sided limits $(\pdx{n}^+ u) (x_0)$, $(\pdx{n}^+ u)(x_0)$, $(\pdx{n}^+ \pdx{i} u)(x_0)$,$(\pdx{i} \pdx{n}^+ u)(x_0)$ exist.
Since for $x_n=0$ we have $(Eu)(x_0) = u(x_0)$, $Eu$ is twice continuously differentiable in directions $1$ to $(n-1)$. By \ref{Exten1dim} the function $Eu$ is twice continuously differentiable in $x_n$. 
Thus for $x_0 \in Q^0$ $Eu$ is differentiable, twice differentiable w.r.t $x_n$ and twice differentiable in the variables $x_i$ for $i <n$. It is left to show the existence of the mixed derivatives $\pdx{i}\pdx{n}$ and $\pdx{n}\pdx{i}$.
Denote by $u_{in}$ ($i < n$) the continuous extension of $\pdx{i}\pdx{n} u$ to $Q^{0,+}$. Then:
\begin{align*} \pdx{i} \pdx{n} (Eu) (x_0) &= \pdx{i} \pdx{n}^+ (Eu) (x_0) \\
&= (\pdx{i} \pdx{n}^+ u) (x_0) \\
&= (u_{in})(x_0) 
\end{align*}
\begin{align*} (\pdx{n}^- \pdx{i} (E u)) (x_0) &= (\pdx{n}^- (\pdx{i} u + \pdx{n} u (\pdx{i} \frac{b_n}{a_{nn}}) x_n^2) \circ F) (x_0) \\
&= \pdx{n}^- ((\pdx{i} u) \circ F) (x_0) \notag \\ 
&= -(\pdx{n}^+ \pdx{i} u)(F'(x_0))=( \pdx{n}^+ \pdx{i} u) (x_0) \notag
\end{align*}
Hence $Eu$ is twice differentiable in $[-R,R]^{(n-1)} \times (-\delta,\infty)$.   
The second derivatives are also Hölder continuous in $x_n=0$ since they are Hölder continuous in $Q^+$ and $Q^-$.
Choose now a cutoff $\eta$ for $\R_0^+$ and $(-\delta,R]$ and define finally:
$$ E_1 u = \eta(x_n)  Eu $$
Then $E_1 : X_1 \longrightarrow \Ctwoalpha{Q}$, analogously to \ref{Exten1dim} one can show using \ref{ContLinOp}, that the operator is also continuous from \\ $(X_1,\norm{\cdot}{\Ctwoalpha{Q^{0,+}}})$ into $(\Ctwoalpha{Q},\norm{\cdot}{\Ctwoalpha{Q}})$
\end{proof}

\newcommand{\Ux}{U_{x_0}}
\newcommand{\Utilde}{\widetilde{U}_{x_0}}
\newcommand{\Utildei}[1]{\widetilde{U}_{x_{#1}}}
\newcommand{\Vtildei}[1]{\widetilde{V}_{y_{#1}}}
\newcommand{\Vx}{\widetilde{V}_{y_0}}
\newcommand{\Psix}{\Psi_{x_0}}
\newcommand{\V}{V}
\newcommand{\Vplus}{V^+}
\newcommand{\Vminus}{V^-}
\newcommand{\Q}{Q}
\newcommand{\Qplus}{\Q^+}
\newcommand{\Qminus}{\Q^-}

\newcommand{\Partuni}{\Phi}
\newcommand{\tildeU}{\tilde{U}}
\begin{theorem}[Extension on whole domain] { \ } \label{Exten} \\
Let $\Om \subset \R^n$ be a bounded domain with $C^{4,\alpha}$-smooth boundary. $L$ the elliptic differential operator $\eqref{defG}$ with coefficients $a_{ij}$,$b_{i} \in \Ctwoalpha{\Omclo}$.
The space $\{u \in \Ctwoalpha{\Omclo} \ | \ u=0,\ Lu=0 \ \text{on} \ \partial \Om \} $  can be continuously and linearly embedded into $\Ctwoalphac{\R^n}$ such that $\norm{u}{C^0_c(\R^n)} = \norm{u}{C^0(\Omclo)}$
\end{theorem}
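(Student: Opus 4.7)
The plan is to localise the extension problem at each boundary point via a chart that flattens $\partial\Om$, apply the straight-boundary extension of Theorem~\ref{ExtenStraight} in each chart, and glue the local extensions together with a partition of unity whose total mass is bounded by $1$, so that the contractivity of the local reflections survives the gluing. For a fixed $x_0 \in \partial\Om$, the $C^{4,\alpha}$-smoothness of $\partial\Om$ yields a $C^{4,\alpha}$-diffeomorphism $\Psi_{x_0}\colon\Utilde\to V_{x_0}$ mapping $\Utilde\cap\Om$ onto an upper-half neighborhood of $0$ and $\Utilde\cap\partial\Om$ onto $\{y_n=0\}$. Under this chart (Definition~\ref{TrafoDiffOp}) the operator $L$ transforms into an elliptic operator with $\Ctwoalphaf$-coefficients -- this is precisely where the $C^{4,\alpha}$ assumption is used, since second derivatives of $\Psi_{x_0}^{-1}$ enter the new drift. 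A further application of Lemma~\ref{TrafoNoMix} provides a diffeomorphism $\floone$ killing the mixed crossterms $\tilde a_{in}$ on the flat face. Setting $\Partuni = \floone\circ\Psi_{x_0}$ and shrinking so that the image of $\Partuni$ contains a cube $Q=[-R,R]^n$ completes the local chart construction.

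Pulling back, $\hat u := u\circ\Partuni^{-1}$ lies in $\Ctwoalpha{Q^{0,+}}$ and vanishes on $\{y_n=0\}$; since all tangential derivatives of $\hat u$ vanish on that flat face and the mixed coefficients $\tilde a_{in}$ are zero there, the identity $Lu=0$ on $\partial\Om$ reduces exactly to $\tilde a_{nn}\partial_n^2 \hat u + \tilde b_n \partial_n \hat u = 0$ on $\{y_n=0\}$, which is precisely the hypothesis of Theorem~\ref{ExtenStraight}. Applying that theorem and pushing forward by $\Partuni^{-1}$ produces a local extension $E_{x_0} u \in \Ctwoalpha{\Utilde}$ that coincides with $u$ on $\Utilde\cap\Omclo$ and satisfies $\norm{E_{x_0} u}{C^0(\Utilde)} \le \norm{u}{C^0(\Omclo)}$.

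For the global step I would cover $\Omclo$ by finitely many such $\Utildei{1},\dots,\Utildei{N}$ (using compactness of $\partial\Om$) together with an interior open set $U_0 \subset\subset \Om$, and pick a smooth partition of unity $(\phi_i)_{i=0}^N$ of the form $\phi_i = \eta\,\psi_i/\sum_j\psi_j$, where the $\psi_i \in C^\infty_c$ are non-negative bumps subordinate to the cover and $\eta \in C^\infty_c(\R^n)$ satisfies $\eta\equiv 1$ on $\Omclo$ and $0\le\eta\le 1$, so that $\sum_i\phi_i \equiv \eta$. Setting $E_0 u = u$ on $U_0$ and $E_i u = E_{x_i} u$ for $i\ge 1$, define
\[
 (Eu)(x) := \sum_{i=0}^N \phi_i(x)\,(E_i u)(x),
\]
each summand being extended by zero outside $\supp(\phi_i)$. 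Then $Eu \in \Ctwoalphac{\R^n}$; on $\Omclo$ every $E_i u$ equals $u$ and $\sum_i\phi_i = 1$, so $Eu = u$; off $\Omclo$,
\[
 |Eu(x)| \le \Bigl(\sum_i \phi_i(x)\Bigr)\norm{u}{C^0(\Omclo)} = \eta(x)\norm{u}{C^0(\Omclo)} \le \norm{u}{C^0(\Omclo)},
\]
which gives the sup-norm identity. Linearity is immediate and $\Ctwoalphaf$-continuity transfers through each elementary step (chart pullback, Theorem~\ref{ExtenStraight}, chart pushforward, multiplication by $\phi_i$, summation).

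The hard part is the \emph{exact} contractivity $\norm{Eu}{C^0_c(\R^n)} = \norm{u}{C^0(\Omclo)}$ rather than the mere existence of a continuous $\Ctwoalphaf$-extension. It forces the use of the reflection-based local extension of Theorem~\ref{ExtenStraight} over a standard weighted-sum construction (the latter only gives a bounded operator), and it dictates the normalisation $\sum_i\phi_i = \eta \le 1$; without the global cutoff $\eta$, overlapping bumps could sum to more than $1$ and destroy the sup-norm identity.
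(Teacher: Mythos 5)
Your proposal follows essentially the same route as the paper: local charts $\Psi_{x_0}$ flattening $\partial\Om$, the crossterm-killing diffeomorphism from Lemma~\ref{TrafoNoMix}, the straight-boundary extension of Theorem~\ref{ExtenStraight} in each chart, and a partition of unity with $\sum_i\phi_i = 1$ on $\Omclo$ and $\sum_i\phi_i \le 1$ outside, glued so that the contractivity of the local reflections propagates to the global operator. Your explicit normalisation $\phi_i=\eta\,\psi_i/\sum_j\psi_j$ is a small but welcome clarification of why the paper's unexplained condition $\sum_i\Phi_i\le 1$ outside $\Omclo$ can always be arranged; otherwise the two proofs coincide.
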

\begin{proof}
The proof works as follows: For each point the boundary can be flatten out locally. For the flat boundary the diffeomorphism for transforming the differential operator (\ref{TrafoNoMix}) and the extension operator (\ref{ExtenStraight}) are applied to extend the function locally. The global construction follows using a partition of unity.
Now to the details:
Let $R>0$. By definition of $C^{4,\alpha}$-smoothness we find for each point $x_0 \in \partial \Om$ a neighborhood $U_{x_0}$ and a diffeomorphism $\Psix$ such that $\Psix(\Ux) = \BR$, $\Psix(\Ux \cap \partial \Om) \subset \BRzero$, $\Psix(\Ux \cap \Om) \subset \BRplus$, i.e. points inside $\Om$ have positive component $x_n$, points on the boundary zero component etc. 
Now fix some $x_0 \in \partial \Om $
Denote by $L_{1,x_0} = T_{\Psi_{x_0}}^{-1} \circ L  \circ T_{\Psi_{x_0}} $ the transformed differential operator acting on $\Ctwoalphaf{B^{0,+}_R(0)}$. This transformed differential operator has also $C^{2,\alpha}$ smooth first and second order coefficients by the remarks after \ref{TrafoDiffOp}.
By \ref{TrafoNoMix} there exists a $\Ctwoalphaf$-smooth diffeomorphism $\floone_{x_0}$ on $B_{R}(0)$ (after a possible substitution by some smaller $R'$). 
Then $L_2 = T^{-1}_{\floone_{x_0}} \circ L_{1,x_0} \circ T_{\floone_{x_0}}$ defines a differential operator having no mixed derivatives of second order on $\floone_{x_0}(\BRzero)$. 
Since $\V_{x_0}$ is open there exists a cuboid $\Q_{x_0} \subset \subset \V_{x_0}$.
Set $\tilde{U}_{x_0} = \Psi_{x_0}^{-1} (\floone_{x_0})^{-1} (Q_{x_0})$. The sets $(\tilde{U}_x)_{x \in \partial \Om}$ cover the boundary and by compactness of the boundary there exists a finite number $(\tilde{U}_i)_{1 \le i \le N}$ covering $\partial \Om$. Denote by $\Psi_i$,$\floone_i$,$\Q_i$, the corresponding diffeomorphism and cuboids. Let
\begin{enumerate}
\item $F_i = \floone_i \circ \Psi_i : \tildeU_i \rightarrow Q_i$
\item $T_i = T_{\Psi_i} \circ T_{\floone_i} : C^{s,\sigma}(\Q_i) \longrightarrow C^{s,\sigma}(\tilde{U}_i)$ for $s+\sigma \le 2+\alpha$, $s \in \N$, $0 < \sigma < 1$
\item $L^i_2 := T^{-1}_i \circ L \circ T_i \ : \Ctwoalpha{\Q^{0,+}_i} \longrightarrow \Czeroalpha{\Q^{0,+}_i} $
\end{enumerate}

Set $$X_i = \{ u \in \Ctwoalpha{Q^{0,+}_i} \ | \ u(z) = L^i_2 u(z) = 0 \ \text{for} \ z_n = 0, \  \} $$
Since $L^i_2$ and $X_i$ fulfil the assumptions of \ref{ExtenStraight} there exists a continuation operator $$ E_i : X_i \rightarrow \Ctwoalpha{\Q_i}$$ with $\norm{E_iu}{C^0{\Q_i}} = \norm{u}{C^0{\Qplus_i}}$. \newline
For $u \in \Ctwoalpha{\Omclo}$ with $Lu = u = 0$ on the boundary, we have $u_i:= T^{-1}_i u \in X_i$, indeed:
\begin{grothaus}
For $z \in Q^0$ we have $ F_i^{-1}(z) \in \partial \Om$ and thus $(T^{-1}_i u)(z) = u(F_i^{-1}(z)) = 0$ and $$L_2^i (u_i)(z) = (T_i^{-1} \circ L \circ T_i u_i)(z) = (L u)(F_i^{-1}(z)) = 0.$$  
And the smoothness is immediate by the smoothness of $u$ and the diffeomorphisms.
Thus the extension operator $E_i$ can be applied to $T^{-1}_i u$.
\end{grothaus}

Hence we can define a global extension operator in the following way:
Choose a $\tildeU_0 \subset \subset \Om$ with $\Om \subset \tildeU_0 \cup \bigcup \tildeU_i$ and the corresponding partition of unity $(\Partuni_i)^{N}_0$.  
$$ E : \{ u \in \Ctwoalpha{\Omclo} \ | \ u(x) = Lu (x) = 0 \ \text{for} \ x \in \partial \Om \} \longrightarrow \Ctwoalphac{\R^n}  $$
$$ u \longrightarrow  \Phi_0 u + \sumfromto{i}{N} \Phi_i T_i \circ E_i \circ T_i^{-1} u $$
where $\Phi_i$ is meant as the pointwise multiplication operator. \newline
Denote by $\tilde{E_i} = T_i\circ E_i \circ T_i^{-1}$ the local extension operator acting on $\tildeU_i$. Since for $x \in \Om \cap \tildeU_i$ it holds $\tilde{E_i} u (x) = u(x)$ and $\sumfromto{0}{N} \Phi_i =1$ in $\Om$ we have $Eu = u$ in $\Om$.
Since $$\sumfromto{0}{N} \Phi_i \le 1 \ \text{in} \ \Omclo^c$$ and $\Phi_i\ge 0$ and the local extension are contractive we also have: $$\norm{u}{\Czero{\R^n}} = \norm{u}{\Czero{\Om}}$$
Finally $\Phi_i \tilde{E_i} u \in \Ctwoalphac{\R^n}$ yields that $Eu \in \Ctwoalphac{\R^n}$. \newline
The continuity from $(X_1,\norm{\cdot}{\Ctwoalpha{\Omclo}})$ to $(\Ctwoalphac{\R^n},\norm{\cdot}{\Ctwoalphac{\R^n}})$ follows analougsly to \ref{ExtenStraight} using \ref{ContLinOp}.

\end{proof}

\begin{remark}

For $u \in \Ctwoalpha{\Om}$ the $\Ctwoalphaf$-norm of $Eu$ can be estimated by the $\Ctwoalphaf$-norm of $u$, of the function $F(a,b,s)$ (\ref{ReflFunc}), the diffeomorpism $\floone$ and $\Psi$ applied in \ref{Exten} and the partition of unity.

\end{remark}

\begin{scherz}
 \textit{Haben sie auch die Core-Property ??} (Grothaus)
\end{scherz}

\begin{theorem}[Core] { \ } \label{Core} \\
Let $\Om$ be a domain with $\Ctwoalphaf$-smooth boundary, $L$ the elliptic differential operator \ref{defG} with the additional property, that $a_{ij}$,$b_i \in \Ctwoalpha{\Omclo}$ (for $1\le i,j \le n$). 
Then the set 
$D(L) := \DL $ is a core for the generator of the operator semigroup $(T_t)_{t\ge 0}$, solving the Cauchy-Dirichlet Problem \eqref{CauchyProblem}, and can be embedded into $\Ctwoalphac{\R^n}$ such that $\| u \|_{C^0_c(\R^n)} = \| u \|_{C^0(\Omclo)} $
\end{theorem}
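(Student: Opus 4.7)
The plan is to assemble the Core theorem from the two substantive results already established in this section, since the statement is essentially the conjunction of Assumption \ref{assSemi} and Assumption \ref{assCont} specialized to the present setting. I would split the proof into two independent parts.

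For the core property, I would invoke Theorem \ref{EllipOpSemigroup}, which says that the closure of $(L, D(L))$ on $C^0_0(\Omclo)$ generates a strongly continuous semigroup $(T_t)_{t \ge 0}$. By the very definition of ``closure'', any operator core $D \subset D(\overline{L})$ whose closure is $\overline{L}$ is a core for the generator; since here the generator is by construction the closure of $(L, D(L))$, it follows immediately that $D(L) = \DL$ is a core. The fact that $(T_t)$ resolves the Cauchy--Dirichlet problem \eqref{CauchyProblem} for $u_0 \in D(L)$ is then standard semigroup theory: $u(t, \cdot) := T_t u_0$ lies in $C^1([0,\infty), C^0_0(\Omclo))$ with $u'(t) = \overline{L} u(t)$, and the homogeneous boundary condition is preserved because $T_t$ maps $C^0_0(\Omclo)$ into itself.

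For the embedding property, I would simply cite Theorem \ref{Exten}, which produces a continuous linear operator $E : \DL \longrightarrow \Ctwoalphac{\R^n}$ satisfying the required isometry on the sup-norm, $\norm{Eu}{C^0_c(\R^n)} = \norm{u}{C^0(\Omclo)}$. Nothing additional needs to be checked.

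The main obstacle is not in this final assembly but in the two inputs it rests on. The semigroup generation in Theorem \ref{EllipOpSemigroup} required the whole maximum-principle / Perron / Schauder machinery of Section 2, in particular the dissipativity of $L - \omega$ (Lemma \ref{Dissipative}) together with the solvability of the resolvent equation on $C^{2,\alpha}$-smooth domains (Theorem \ref{SmoothBoundaryPortion}). The contractive extension in Theorem \ref{Exten} was the harder technical step: one must flatten the boundary locally via $\Psi_{x_0}$, then remove mixed second-order crossterms on the flattened boundary via the diffeomorphism $F^{(1)}_{x_0}$ from Lemma \ref{TrafoNoMix}, apply the squeezed reflection $\Flotwoab(s) = -(s - \tfrac{b}{a}s^2)$ of Lemma \ref{ReflFunc} in the normal direction (which is where the boundary conditions $u = Lu = 0$ are used), and finally glue local extensions together by a partition of unity while preserving the sup-norm. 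Once those are in hand, the present theorem is a direct corollary.
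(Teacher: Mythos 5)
Your proof is correct and follows exactly the same route as the paper: cite Theorem \ref{EllipOpSemigroup} for the semigroup generation and core property, and Theorem \ref{Exten} for the contractive embedding. The paper's own proof is just the two-sentence version of your first two paragraphs; your additional commentary on where the real work lies is accurate but not needed for the logic.
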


\begin{proof}
By \ref{EllipOpSemigroup} the closure of $(L,D(L))$ generates a strongly continuous semigroup $(T_t)$. By \ref{Exten} $D(L)$ can be embedded into $\Ctwoalphac{\R^n}$ in the required way.
\end{proof}

\section{Appendix}

For completeness we list here some theorems and lemmata which were used in this \whatthisis 

\subsection{Tools from elementary Analysis}

\begin{lemma}{Cutoff for A and B} { \ } \\
 Let $A$, $B$ two subsets of $\R^n$ with $A \subset \subset B$, i.e. $dist(A,\partial B)>0$.
 Then there exists a function $\eta$ called \textit{cutoff for A and B} with:
 $$\eta = 1 \ \text{on} \ A,\ \supp(\eta) \subset \subset B  ,\ \eta \in C^\infty(\R^n) $$
\end{lemma}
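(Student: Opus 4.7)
The plan is to construct $\eta$ by convolving the characteristic function of a thickening of $A$ with a standard mollifier. Set $d := \text{dist}(A, \partial B) > 0$ and fix any $\delta \in (0, d/3)$. Let $A_\delta := \{ x \in \R^n : \text{dist}(x, A) \le \delta \}$; this set is closed (since $x \mapsto \text{dist}(x,A)$ is continuous) and hence measurable, so its indicator $\chi_{A_\delta}$ lies in $L^\infty(\R^n)$. Choose a standard mollifier $\rho \in C^\infty(\R^n)$ with $\rho \ge 0$, $\supp(\rho) \subset B_1(0)$, and $\int_{\R^n} \rho = 1$ (e.g.\ the normalised bump $\exp(-1/(1-|x|^2))$), and define $\rho_\delta(x) := \delta^{-n}\rho(x/\delta)$. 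Then set
\[
\eta := \chi_{A_\delta} * \rho_\delta.
\]

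The verification splits into three routine steps. First, smoothness: since $\rho_\delta \in C^\infty_c(\R^n)$ and $\chi_{A_\delta} \in L^\infty$, differentiation under the integral sign gives $\eta \in C^\infty(\R^n)$ with $D^\alpha \eta = \chi_{A_\delta} * D^\alpha \rho_\delta$ for every multi-index $\alpha$. Second, $\eta \equiv 1$ on $A$: for $x \in A$ and any $z$ with $|z| \le \delta$, we have $\text{dist}(x-z, A) \le |z| \le \delta$, so $x - z \in A_\delta$; consequently
\[
\eta(x) = \int_{\R^n} \chi_{A_\delta}(x-z)\,\rho_\delta(z)\,dz = \int_{\R^n} \rho_\delta(z)\,dz = 1.
\]
Third, support control: if $\eta(x) \ne 0$, there must exist $z \in \supp(\rho_\delta) \subset B_\delta(0)$ with $x-z \in A_\delta$, whence $\text{dist}(x,A) \le |z| + \delta \le 2\delta$. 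Therefore
\[
\supp(\eta) \subset \{x \in \R^n : \text{dist}(x, A) \le 2\delta\},
\]
and the latter set has distance at least $d - 2\delta > 0$ from $\partial B$, so $\supp(\eta) \subset\subset B$ in the sense declared by the statement.

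There is essentially no obstacle: the only thing to keep track of is the choice of $\delta$. The factor $1/3$ is convenient but not sharp; any $\delta < d/2$ would simultaneously ensure that the convolution averages $\chi_{A_\delta}$ to $1$ on $A$ (needs $\delta \le d$) and keeps $\supp(\eta)$ away from $\partial B$ (needs $2\delta < d$). All remaining assertions reduce to the standard properties of convolution with a compactly supported smooth kernel.
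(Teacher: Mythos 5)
Your construction is correct and is essentially the same as the paper's, which only sketches the argument as ``convolution of the indicator function of $A$ with an approximate identity with sufficiently small support'' and refers to Alt. Your writeup simply fills in the details the paper omits, including the sensible refinement of mollifying the indicator of the thickened set $A_\delta$ rather than of $A$ itself, which is exactly what is needed to get $\eta\equiv 1$ on all of $A$.
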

One can construct such an function easily by convolution of the indicator function of A with an approximate idenity with sufficiently small support, compare \cite{Alt},2.18, page 117.

\begin{definition}{Dirac sequence} { \ } \\
A sequence of functions $(\varphi_k)_{k \in \N}$ is called \textit{standard dirac sequence}, if:

\begin{enumerate}
 \item $\varphi_k \in C^\infty(\R^n)$
 \item $\supp (\varphi_k) \subset {B_{\frac{1}{k}}(0)}$
 \item $\varphi_k \ge 0$
 \item $\int_{\R^n} \varphi_k = 1$
\end{enumerate}

\end{definition}

\begin{lemma}{$C^\infty$-smoothness} { \ } \\
 Let $f \in L^1(\R^n)$, $\phi \in C_c^\infty(\R^n)$ then:
$$ \phi \ast f \in C^\infty(\R^n) $$
\end{lemma}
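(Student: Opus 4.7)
The plan is to establish the standard formula $\partial^\beta(\phi \ast f) = (\partial^\beta \phi) \ast f$ for every multi-index $\beta$ and then iterate. First I would observe that the defining integral
$$ (\phi \ast f)(x) = \int_{\R^n} \phi(x-y) f(y)\,\dd y $$
converges absolutely for every $x \in \R^n$, because $\phi$ is bounded and $f \in L^1(\R^n)$; indeed $|\phi(x-y)f(y)| \le \|\phi\|_{\infty} |f(y)|$, and the right-hand side is integrable in $y$.

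Next I would show that $\phi \ast f$ admits first-order partial derivatives. Fix $x_0 \in \R^n$ and a coordinate direction $e_i$. For $h \ne 0$ consider the difference quotient
$$ \frac{(\phi \ast f)(x_0+he_i) - (\phi \ast f)(x_0)}{h} = \int_{\R^n} \frac{\phi(x_0 + he_i - y) - \phi(x_0-y)}{h} f(y)\,\dd y. $$
By the mean value theorem applied to $\phi$ (which is $C^\infty$ with compact support), the integrand is bounded in absolute value by $\|\partial_i \phi\|_\infty |f(y)|$ for all sufficiently small $h$, and this dominating function is integrable since $f \in L^1(\R^n)$. The pointwise limit of the integrand as $h \to 0$ is $(\partial_i \phi)(x_0-y) f(y)$. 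The dominated convergence theorem therefore yields
$$ \partial_i(\phi \ast f)(x_0) = \int_{\R^n} (\partial_i \phi)(x_0-y) f(y)\,\dd y = ((\partial_i \phi) \ast f)(x_0). $$

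The key observation is now that $\partial_i \phi \in C_c^\infty(\R^n)$ as well, so the same argument applies to $(\partial_i \phi) \ast f$. Iterating gives, for every multi-index $\beta$,
$$ \partial^\beta(\phi \ast f) = (\partial^\beta \phi) \ast f, $$
and each such convolution exists as a well-defined function on $\R^n$ by the first step.

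Finally I would verify continuity of each derivative. Since $\partial^\beta \phi$ is uniformly continuous on $\R^n$ (being compactly supported and continuous), for $x_n \to x$ the integrands $(\partial^\beta \phi)(x_n-y) f(y)$ converge pointwise to $(\partial^\beta \phi)(x-y) f(y)$ and are dominated by $\|\partial^\beta \phi\|_\infty |f(y)| \in L^1$; another application of dominated convergence gives continuity at $x$. Hence $\partial^\beta(\phi \ast f) \in C(\R^n)$ for every $\beta$, so $\phi \ast f \in C^\infty(\R^n)$. No step here is really an obstacle; the only delicate point to be careful about is producing an integrable dominating function independent of $h$ (resp.\ of $x_n$), which the compact support and smoothness of $\phi$ deliver via the uniform bound $\|\partial^\beta \phi\|_\infty < \infty$.
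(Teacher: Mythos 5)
Your proof is correct: the absolute convergence of the convolution integral, the mean-value-theorem bound $\|\partial_i\phi\|_\infty|f(y)|$ giving an $h$-independent integrable dominating function, the dominated-convergence passage to $\partial_i(\phi\ast f)=(\partial_i\phi)\ast f$, the iteration using $\partial_i\phi\in C_c^\infty(\R^n)$, and the final continuity argument are all sound. The paper states this lemma without proof, treating it as a standard tool, so there is no argument to compare against; your write-up supplies exactly the standard differentiation-under-the-integral argument that the statement implicitly rests on, and it needs no correction.
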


\begin{lemma}{$\Ckalpha$-norm estimate of the convolution} { \ } \label{ConvCkEstimate} \\
 Let $\Om$ be a bounded region, $B \subset \subset \Om$, $\varphi_k$ a standard dirac sequence with compact support, $u_k = \varphi_k \ast u$. \\
If $u \in \Ckalpha(\Omclo)$ then for all $k$ with dist$(B,\partial \Om) > \frac{1}{k}$.
$$ \norm{u_k}{\Ckalpha(B)} \le \norm{u}{\Ckalpha(\Omclo)}  $$
\end{lemma}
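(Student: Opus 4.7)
The plan is to show the estimate derivative-by-derivative, exploiting the fact that convolution with a probability density is a contraction in any norm that can be written as the supremum (or Hölder seminorm) of a linear functional. First I will use the distance condition $\operatorname{dist}(B,\partial\Omega)>1/k$ to ensure that for every $x\in B$ and every $y\in\operatorname{supp}(\varphi_k)\subset B_{1/k}(0)$ the translate $x-y$ lies in $\overline{\Omega}$, so that all evaluations of $u$ and its derivatives below are legitimate.

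Next, since $u\in C^{k,\alpha}(\overline{\Omega})$, classical differentiation under the integral sign gives, for every multi-index $\beta$ with $|\beta|\le k$,
\begin{equation*}
D^\beta u_k(x)\;=\;\int_{\mathbb{R}^n}\varphi_k(y)\,D^\beta u(x-y)\,\mathrm{d}y,\qquad x\in B.
\end{equation*}
From this representation the sup-norm estimate is immediate: using $\varphi_k\ge 0$ and $\int\varphi_k=1$,
\begin{equation*}
\bigl|D^\beta u_k(x)\bigr|\;\le\;\int\varphi_k(y)\,\bigl|D^\beta u(x-y)\bigr|\,\mathrm{d}y\;\le\;\|D^\beta u\|_{C^0(\overline{\Omega})}.
\end{equation*}

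For the Hölder part, take any two points $x,x'\in B$ and a multi-index $\beta$ with $|\beta|=k$. Using the same integral representation,
\begin{equation*}
\bigl|D^\beta u_k(x)-D^\beta u_k(x')\bigr|\;\le\;\int\varphi_k(y)\,\bigl|D^\beta u(x-y)-D^\beta u(x'-y)\bigr|\,\mathrm{d}y\;\le\;[D^\beta u]_{\alpha,\overline{\Omega}}\,|x-x'|^\alpha,
\end{equation*}
again because $\int\varphi_k=1$. Dividing by $|x-x'|^\alpha$ and taking the supremum bounds the Hölder seminorm of $D^\beta u_k$ on $B$ by that of $D^\beta u$ on $\overline{\Omega}$.

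Adding the sup-norm bounds for $|\beta|\le k$ and the Hölder seminorm bounds for $|\beta|=k$ yields exactly $\|u_k\|_{C^{k,\alpha}(B)}\le\|u\|_{C^{k,\alpha}(\overline{\Omega})}$. There is no real obstacle here; the only point requiring attention is the domain check ensuring $x-y\in\overline{\Omega}$, which is guaranteed by the hypothesis $\operatorname{dist}(B,\partial\Omega)>1/k$ together with $\operatorname{supp}(\varphi_k)\subset B_{1/k}(0)$.
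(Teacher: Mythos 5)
Your proposal is correct and follows essentially the same route as the paper: commute the convolution with derivatives up to order $k$ (the paper cites the integration-by-parts identity $D^s(\varphi_k\ast u)=\varphi_k\ast D^s u$, you invoke differentiation under the integral, which amounts to the same thing given the distance condition), then use $\varphi_k\ge 0$ and $\int\varphi_k=1$ to contract both the sup-norms and the top-order Hölder seminorm. No gaps; the domain check via $\operatorname{dist}(B,\partial\Omega)>\frac{1}{k}$ is exactly the point the paper also relies on.
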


\begin{proof}
 First let $u \in C^{0,\alpha}(\Omclo)$. Set $C=\norm{u}{C^{0,\alpha}(\Omclo)}$. Then for $x \in B$, we have $x-z \in \Om$ for all $z \in B_{\frac{1}{k}}(0)$

 \begin{multline}
\frac{\mod{u_k(x) - u_k(y)}}{\mod{x-y}^\alpha} = \int_{B_{\frac{1}{k}}} \phi_k(z) \frac{\mod{u_k(x-z) - u_k(y-z)}}{\mod{x-y}^\alpha} dz \overset{*}{\le}  C \int_{B_{\frac{1}{k}}} \phi_k(z) dz = C
 \end{multline}

Where $*$ holds since $x-z$, $y-z \in \Om$ and thus 
$\mod{u_k(x-z) - u_k(y-z)} \le C \mod{x-y}^\alpha$

Now if $u \in C^k(\Omclo)$ a simple calculation involving an integration by parts yields:
$$D^s (\varphi_k \ast u) (x) = (\varphi_k \ast D^s u) (x) \ \text{for all} \ x \in B$$
for every multiindex with $\mod{s} \le k$. (See e.g. \cite{GilTru} page 143, Lemma 7.3).
Thus: $$\norm{D^s u}{C^0(B)} \le \norm{D^s u}{C^0(\Om)}$$
\end{proof}

\begin{lemma}{Extension of $\Ckalpha$-smooth boundary function} { \ } \label{TraceCk} \\
 Let $\Om$ be a region with a $\Ckalpha$-smooth boundary, let $g \in \Ckalpha(\partial \Om)$. Then there exists an $g_1 \in \Ckalpha(\Omclo)$ with $g_1 = g$ on $\partial \Om$.
\end{lemma}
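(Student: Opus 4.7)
The plan is to flatten the boundary by finitely many $\Ckalpha$-charts, extend the pulled-back boundary datum from the flat piece into the half-space by the cheapest possible device (making it constant in the normal direction), and then glue the local extensions by a smooth partition of unity.

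First I would invoke the $\Ckalpha$-smoothness of $\partial\Om$ together with compactness to pick finitely many open sets $U_1,\dots,U_N \subset \R^n$ covering $\partial\Om$, each equipped with a $\Ckalpha$-diffeomorphism $\Psi_i : U_i \to \BR$ satisfying $\Psi_i(U_i \cap \partial\Om) \subset \BRzero$ and $\Psi_i(U_i \cap \Om) \subset \BRplus$. In each chart the pulled-back datum $h_i := g \circ \Psi_i^{-1}$ lies in $\Ckalpha$ on its piece of $\BRzero$, and I would extend it into $\BR$ by making it constant in the $x_n$-direction,
$$ \tilde h_i(x^{(n-1)}, x_n) := h_i(x^{(n-1)}, 0). $$
Since $\tilde h_i$ does not depend on $x_n$, the $\Ckalpha$-regularity on $\BR$ is immediate: the $x_n$-derivatives vanish, the tangential derivatives inherit the regularity of $h_i$, and the Hölder quotient of the top-order derivative is controlled via $|(x^{(n-1)},x_n)-(y^{(n-1)},y_n)| \geq |x^{(n-1)}-y^{(n-1)}|$. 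Pulling back then yields a local extension $g_i := \tilde h_i \circ \Psi_i \in \Ckalpha(U_i)$ coinciding with $g$ on $U_i \cap \partial\Om$.

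To globalize I would choose an additional open set $U_0 \subset\subset \Om$ with $\Omclo \subset U_0 \cup \bigcup_{i=1}^N U_i$, pick a $C^\infty$ partition of unity $(\phi_i)_{i=0}^N$ subordinate to this cover, and define
$$ g_1 := \sum_{i=1}^N \phi_i\, g_i $$
on $\Omclo$, each summand extended by zero outside $U_i$. As a finite sum of products of $C^\infty$- and $\Ckalpha$-factors, $g_1 \in \Ckalpha(\Omclo)$. For $x \in \partial\Om$ one has $\phi_0(x)=0$ since $\supp(\phi_0) \subset U_0 \subset\subset \Om$, hence $\sum_{i=1}^N \phi_i(x) = 1$; and $g_i(x) = g(x)$ whenever $x \in U_i$, so $g_1(x) = g(x)$ as required.

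I do not expect a genuine obstacle here — the argument is local-to-global bookkeeping. The single point that deserves attention is the preservation of the $\Ckalpha$-class under composition with the chart diffeomorphism $\Psi_i$, which follows from the standard product and chain-rule estimates in Hölder spaces and is precisely what forces the boundary regularity of $\partial\Om$ to match the regularity demanded from $g$.
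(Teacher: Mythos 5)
The paper does not prove this lemma at all; it simply cites \cite{GilTru}, Lemma 6.38, p.~131. Your proposal therefore supplies a complete argument where the paper relies on a reference. The argument is correct and is in fact the standard one: cover $\partial\Om$ by finitely many $\Ckalpha$-boundary charts $\Psi_i$, pull the boundary datum back to the flat piece, extend by making it constant in $x_n$, pull back by $\Psi_i$ to get local extensions $g_i$, and glue with a partition of unity. Your two technical observations are the right ones to flag: (i) the constant-in-$x_n$ extension preserves $\Ckalpha$ because the relevant Hölder quotient is bounded above by the tangential one via $|x-y|\ge|x^{(n-1)}-y^{(n-1)}|$, and (ii) composition with a $\Ckalpha$-diffeomorphism preserves $\Ckalpha$ by the chain rule plus the fact that products and compositions of bounded $C^{0,\alpha}$ and Lipschitz maps stay $C^{0,\alpha}$; this is exactly where the $\Ckalpha$-regularity of $\partial\Om$ is used. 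The only cosmetic point: in the global definition $g_1=\sum_{i=1}^N\phi_i g_i$ you implicitly use that each $\phi_i g_i$, extended by zero off $\supp\phi_i\subset\subset U_i$, is globally $\Ckalpha$ — worth stating, but unproblematic. So: correct proof, same route as the cited Gilbarg--Trudinger lemma, filling a gap the paper left to a reference.
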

See \cite{GilTru} page 131, Lemma 6.38
\begin{scherz}
 \textit{He doesn't like the word embedding} (Rosenberger)
\end{scherz}

\begin{lemma}{Smoothness of precompact sequences} { \ } \label{LimitPreCompact} \\
 Let $k \in \N$, $0 < \alpha \le 1$, $\Om$ a domain with $C^{k,\alpha}$-smooth boundary. If for some $l \in N$, $0 \le \beta \le 1$: $l+\beta < k +\alpha$, then the embedding:
$$ C^{k,\alpha}(\Omclo) \emb C^{l,\beta}(\Omclo) $$
is compact. 
Moreover a bounded sequence $u_n \in C^{k,\alpha}(\Omclo)$ has a subsequence converging in $C^{k}$-norm and the limit $u$ is again in $C^{k,\alpha}(\Omclo)$.
\end{lemma}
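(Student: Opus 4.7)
The plan is to deduce the compactness from Arzelà--Ascoli and an elementary Hölder-interpolation inequality.

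First I would show the existence of a $C^k$-convergent subsequence. Let $(u_n) \subset C^{k,\alpha}(\Omclo)$ be bounded. For every multiindex $s$ with $|s| \le k-1$, the sequence $(D^s u_n)$ is uniformly bounded and, because $D^{s+e_i} u_n$ is uniformly bounded and $\Omclo$ is Lipschitz (it even has $C^{k,\alpha}$ boundary), uniformly Lipschitz. For $|s| = k$ the sequence $(D^s u_n)$ is uniformly bounded and uniformly $\alpha$-Hölder continuous. In either case one has equicontinuous bounded families on the compact set $\Omclo$, so Arzelà--Ascoli applied to each of the finitely many derivatives of order $\le k$, combined with a standard diagonal extraction, yields a subsequence $(u_{n_j})$ such that $D^s u_{n_j}$ converges uniformly on $\Omclo$ for every $|s| \le k$. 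Writing $u$ for the $C^0$-limit of $u_{n_j}$, one then has $D^s u_{n_j} \to D^s u$ uniformly for every $|s| \le k$, i.e.\ $u_{n_j} \to u$ in $C^k(\Omclo)$.

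Next I would upgrade this to convergence in $C^{l,\beta}(\Omclo)$ using the hypothesis $l + \beta < k + \alpha$. The key tool is the elementary interpolation inequality
\begin{equation*}
[v]_\beta \le 2\, \|v\|_{C^0(\Omclo)}^{1 - \beta/\gamma} \, [v]_\gamma^{\beta/\gamma},
\end{equation*}
valid for $0 < \beta \le \gamma \le 1$ and any bounded $\gamma$-Hölder function $v$ on $\Omclo$ (split the difference quotient $|v(x)-v(y)|/|x-y|^\beta$ into the cases $|x-y| \le \delta$ and $|x-y| > \delta$ and optimise in $\delta$, using connectedness and the Lipschitz regularity of $\Omclo$). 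I apply it to $v = D^s(u_{n_j} - u)$ with $|s| \le l$: if $l < k$, take $\gamma = 1$ so that $[v]_\gamma$ remains bounded along the subsequence while $\|v\|_{C^0} \to 0$; if $l = k$ then the hypothesis forces $\beta < \alpha$ and one takes $\gamma = \alpha$. In either situation $[D^s(u_{n_j} - u)]_\beta \to 0$, which combined with the $C^k$-convergence from the first step gives $u_{n_j} \to u$ in $C^{l,\beta}(\Omclo)$. This proves the compactness of the embedding.

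Finally, for the ``moreover'' statement, note that the $C^k$-convergence along the subsequence was already established in the first step, so it only remains to see $u \in C^{k,\alpha}(\Omclo)$. This follows from lower semicontinuity of the Hölder seminorm under uniform convergence: for $|s| = k$ and $x \ne y$ in $\Omclo$,
\begin{equation*}
\frac{|D^s u(x) - D^s u(y)|}{|x-y|^\alpha} = \lim_{j \to \infty} \frac{|D^s u_{n_j}(x) - D^s u_{n_j}(y)|}{|x-y|^\alpha} \le \sup_j \|u_{n_j}\|_{C^{k,\alpha}(\Omclo)} < \infty,
\end{equation*}
so taking the supremum over $x, y$ yields a finite Hölder seminorm for $D^s u$, hence $u \in C^{k,\alpha}(\Omclo)$. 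The only real bookkeeping obstacle is ensuring that all finitely many derivatives of order $\le k$ converge along a common subsequence, which is handled by the diagonal argument; the interpolation step and the lower-semicontinuity argument are each one-liners.
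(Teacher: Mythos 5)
Your argument is correct in substance, but it takes a genuinely different and more self-contained route than the paper. The paper disposes of the compactness of the embedding by citing Gilbarg--Trudinger (Lemma 6.36, p.\ 130) and only proves the ``moreover'' part, essentially for the top-order H\"older seminorm: it extracts a uniformly convergent subsequence and bounds the difference quotient of the limit by a triangle inequality before passing to the limit --- which is exactly the lower-semicontinuity observation you make in your last step, so there the two proofs coincide. What you add is a from-scratch proof of the compactness itself: Arzel\`a--Ascoli applied to all derivatives of order $\le k$ (equicontinuity coming from the H\"older bound at top order and from the gradient bound plus quasi-convexity of the $C^{k,\alpha}$ domain at lower orders), a diagonal extraction to get $C^{k}$-convergence, and then the interpolation inequality $[v]_\beta \le 2\,\|v\|_{C^0(\Omclo)}^{1-\beta/\gamma}[v]_\gamma^{\beta/\gamma}$ to upgrade to $C^{l,\beta}$-convergence. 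This buys independence from the cited reference and makes the mechanism behind the strict inequality $l+\beta<k+\alpha$ visible; the paper's version buys brevity. One small caveat: in the case $l<k$, $\beta=1$ (which the statement allows) your interpolation exponent $1-\beta/\gamma$ with $\gamma=1$ degenerates to $0$ and the inequality gives nothing; but then $l+1\le k$, so you can instead bound $[D^s(u_{n_j}-u)]_1$ directly by a constant times $\max_i\|D^{s+e_i}(u_{n_j}-u)\|_{C^0(\Omclo)}$, which tends to $0$ by the $C^k$-convergence already established, so the proof survives with a one-line patch.
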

\begin{proof}
The proof of the compactness of the embedding can be found in \cite{GilTru}, page 130, Lemma 6.36.
Now the second part: Let $u_k \in \Czeroalpha{\Omclo}$ a sequence with $\norm{u_k}{\Czeroalpha{\Omclo}} \le C_\alpha$. By compactness there exists a $u \in C^0(\Omclo)$ and a subsequence with $u_{n_l} \overset{l \rightarrow \infty}{\longrightarrow} u$.
Now for $x,y \in \Omclo$ we have:
\begin{align*} \frac{\mod{u(x) - u(y)}}{\mod{x-y}^\alpha} &\le 2 \frac{\norm{u - u_{k_n}}{C(\Omclo)}}{\mod{x-y}^\alpha} + \frac{u_{k_n}(x) - u_{k_n}(y)}{\mod{x-y}^\alpha} g \\ 
&\le 2 \frac{\norm{u - u_{k_n}}{C(\Omclo)}}{\mod{x-y}^\alpha} + C_\alpha \notag
\intertext{By passing to the limit this gives:}
&\frac{\mod{u(x) - u(y)}}{\mod{x-y}^\alpha} \le C_\alpha
\end{align*}
Thus $u$ is also Hölder continous in $\Omclo$.
\end{proof}

\begin{theorem}[Stone-Weierstrass] { \ } \label{StoneWeier} \\
 Let $K$ be a compact set. Then $C^\infty(K)$ is dense in $C^0(K)$ w.r.t the sup-norm.
\end{theorem}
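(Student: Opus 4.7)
The plan is to reduce the statement to the classical Stone--Weierstrass theorem for subalgebras of $C(K,\R)$. It suffices to exhibit a subalgebra $\mathcal{A} \subset C^\infty(K)$ that contains the constants and separates the points of $K$, since Stone--Weierstrass will then yield density of $\mathcal{A}$, and hence of the larger space $C^\infty(K)$, in $C^0(K)$ with respect to the sup-norm.

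First I would take $\mathcal{A}$ to be the algebra of polynomials in the coordinates $x_1,\dots,x_n$, restricted to $K$. Each polynomial is the restriction of a $C^\infty$-function on $\R^n$, so $\mathcal{A} \subset C^\infty(K)$. The algebra contains the constants (take $p \equiv 1$), and it separates points of $K$: if $x \ne y$ in $K$, then some coordinate index $i$ has $x_i \ne y_i$, and the linear polynomial $z \mapsto z_i$ distinguishes $x$ from $y$. Moreover $\mathcal{A}$ is closed under pointwise sums and products, hence a genuine subalgebra.

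Second I would invoke the classical Stone--Weierstrass theorem: any subalgebra of $C(K,\R)$ containing the constants and separating points is dense in $C(K,\R)$ with respect to the sup-norm. Applied to $\mathcal{A}$, combined with the chain of inclusions $\mathcal{A} \subset C^\infty(K) \subset C^0(K)$, this yields the claim.

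The only subtle point is the interpretation of $C^\infty(K)$ for a compact set $K$ that need not be open; the cleanest convention is to read it as restrictions to $K$ of $C^\infty$-functions on an open neighborhood in $\R^n$, which makes the inclusion $\mathcal{A} \subset C^\infty(K)$ tautological. As an alternative route using tools already developed in the appendix, given $f \in C^0(K)$ one could extend it to $\tilde f \in C^0_c(\R^n)$ via Tietze, convolve with a standard Dirac sequence $\varphi_k$ so that $\varphi_k \ast \tilde f \in C^\infty(\R^n)$, and use uniform continuity of $\tilde f$ to obtain $\varphi_k \ast \tilde f \to \tilde f$ uniformly on $K$; restriction to $K$ then gives the approximation in $C^\infty(K)$. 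I expect no real obstacle here beyond fixing the convention on $C^\infty(K)$.
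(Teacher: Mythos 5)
Your proposal is correct; note, however, that the paper itself gives no proof of this statement at all -- it is listed in the appendix as a standard tool (and then used in the proof of \ref{StoneWeierZero}), so you are supplying an argument where the paper simply cites folklore. Both of your routes work for a compact $K \subset \R^n$, which is the setting in which the theorem is applied (to sets of the form $\clo{\Om_{n+1}}$). The polynomial route is the quickest: polynomials restricted to $K$ form a subalgebra of $C^\infty(K)$ containing the constants and separating points (via the coordinate functions), so the classical Stone--Weierstrass theorem gives density in $C^0(K)$, and the only care needed is exactly the convention you state, namely reading $C^\infty(K)$ as restrictions of smooth functions on a neighborhood of $K$, which matches the paper's usage of $C^{k,\alpha}(\clo{\Om})$-type spaces. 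Your second route (Tietze extension, multiplication by a cutoff to get compact support, convolution with a standard Dirac sequence, and uniform convergence on $K$ from uniform continuity of the extension) is equally valid and arguably fits the paper better, since it reuses the Dirac-sequence and cutoff machinery already set up in the appendix; it is essentially the same mollification argument the paper deploys in \ref{StoneWeierZero} and \ref{ConvCkEstimate}. Either version closes a gap the paper leaves open rather than diverging from an existing proof.
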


\begin{theorem}[Density of $C_c^\infty(\Omclo)$ functions in $C^0_0(\Omclo)$] { \ } \label{StoneWeierZero} \\
 Let $\Om \subset \R^n$ be a bounded domain. Then $C_c^\infty(\Omclo)$ is dense in $C_0^0(\Omclo)$ w.r.t the sup-norm
\end{theorem}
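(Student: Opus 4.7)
The plan is a two-stage approximation: first replace $u \in C^0_0(\overline{\Omega})$ by a continuous function compactly supported inside $\Omega$ via a cutoff near $\partial \Omega$, then smooth the result by convolution with a standard Dirac sequence. The crucial ingredient that makes the cutoff harmless is that $u$ vanishes on $\partial \Omega$ and is uniformly continuous on the compact set $\overline{\Omega}$, so $u(x) \to 0$ as $d(x,\partial \Omega) \to 0$ at a uniform rate.

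More concretely: fix $\varepsilon > 0$. For $\delta > 0$ set $A_\delta := \{ x \in \Omega \mid d(x,\partial \Omega) \geq \delta \}$. Choose a cutoff $\eta_\delta$ for $A_{2\delta}$ and $A_\delta$ (in the sense of the cutoff lemma in the appendix), so $\eta_\delta \in C^\infty(\mathbb{R}^n)$, $0 \leq \eta_\delta \leq 1$, $\eta_\delta \equiv 1$ on $A_{2\delta}$ and $\mathrm{supp}(\eta_\delta) \subset A_\delta \subset \subset \Omega$. Define $u_\delta := \eta_\delta u$ (extended by $0$ outside $\Omega$). Then $u_\delta$ is continuous with compact support in $\Omega$. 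Using uniform continuity of $u$ on $\overline{\Omega}$ together with $u|_{\partial \Omega} = 0$, there is a modulus of continuity $\omega$ with $\omega(s) \to 0$ as $s \to 0$ such that $|u(x)| \leq \omega(d(x,\partial \Omega))$. For $x \in A_{2\delta}$ we have $u_\delta(x) = u(x)$, while for $x \notin A_{2\delta}$ we have $d(x,\partial \Omega) < 2\delta$, so
\[
|u(x) - u_\delta(x)| \leq (1 + \eta_\delta(x))|u(x)| \leq 2\omega(2\delta).
\]
Choose $\delta$ so small that $2\omega(2\delta) < \varepsilon/2$; then $\| u - u_\delta \|_{C^0(\overline{\Omega})} < \varepsilon/2$.

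Next, let $(\varphi_k)_{k \in \mathbb{N}}$ be a standard Dirac sequence with $\mathrm{supp}(\varphi_k) \subset B_{1/k}(0)$ and set $v_k := \varphi_k * u_\delta$. Since $u_\delta \in L^1(\mathbb{R}^n)$ and $\varphi_k \in C^\infty_c(\mathbb{R}^n)$, the lemma in the appendix gives $v_k \in C^\infty(\mathbb{R}^n)$. Moreover $\mathrm{supp}(v_k) \subset \mathrm{supp}(u_\delta) + \overline{B_{1/k}(0)}$, which for $1/k < \delta/2$ is a compact subset of $\Omega$; hence $v_k \in C^\infty_c(\overline{\Omega})$. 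Standard uniform continuity of $u_\delta$ gives $v_k \to u_\delta$ uniformly as $k \to \infty$, so for $k$ large enough $\|v_k - u_\delta\|_{C^0(\overline{\Omega})} < \varepsilon/2$. Combining with the previous step yields $\|u - v_k\|_{C^0(\overline{\Omega})} < \varepsilon$, proving density.

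There is no serious obstacle here; the only point requiring care is the control of $u$ in the boundary strip $\Omega \setminus A_{2\delta}$, which is handled by combining $u|_{\partial \Omega} = 0$ with uniform continuity on $\overline{\Omega}$, so that the cutoff error goes to $0$ uniformly with $\delta$. After that the mollification step is completely standard.
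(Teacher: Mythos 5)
Your proof is correct, and it reaches the conclusion by a genuinely different (though equally standard) route than the paper. The paper works in the opposite order: it fixes $\varepsilon>0$, uses $u=0$ on $\partial\Om$ (together with continuity on the compact set $\Omclo$ -- the same uniform smallness near the boundary that you make explicit through the modulus $\omega$) to find $\Om_n=\{d(x,\partial\Om)>1/n\}$ with $|u|\le\varepsilon$ off $\Om_n$, then smooths \emph{first} by applying its Stone--Weierstrass theorem (Theorem \ref{StoneWeier}) on the compact set $\clo{\Om_{n+1}}$, and only afterwards multiplies by a cutoff for $\Om_n$ in $\Om_{n+1}$, arriving at a $3\varepsilon$-approximation. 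You instead cut off first, obtaining a continuous function with compact support in $\Om$, and then smooth by convolution with a standard Dirac sequence, checking that the support of the mollification stays inside $\Om$. Your route uses only the convolution lemmas already in the appendix rather than Stone--Weierstrass, produces a completely explicit approximant, and would upgrade immediately to stronger norms when $u$ has more regularity (compare Lemma \ref{ConvCkEstimate}); the paper's route avoids any bookkeeping about supports of convolutions near $\partial\Om$, since all smoothing happens on a set compactly contained in $\Om$ from the start. One cosmetic slip on your side: the pointwise error of the cutoff is $(1-\eta_\delta(x))|u(x)|\le|u(x)|$, not $(1+\eta_\delta(x))|u(x)|$; since you only use it as an upper bound, this is harmless.
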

\begin{proof}
 Let $u \in C_0^0(\Omclo)$. Define $\Om_n = \{ x \in \Om \ | \ \text{dist}(x,\partial \Om) > \frac{1}{n} \}$. Thus $\Om = \bigcup \Om_n$ and \newline $\Om_n \subset \subset \Om_{n+1}$. 
 Let $\epsi > 0$. Since $u = 0$ on $\partial \Om$ there exists an $n$ such that: $ \mod{u(x)} \le \epsi$ for $x \notin \Om_n$.
$\clo{\Om_{n+1}}$ is compact, $u \lvert_{\clo{\Om_{n+1}}}$ is continous, by \ref{StoneWeier} there exists a $u_\epsi \in 
C^\infty(\clo{\Om_{n+1}})$ with $$\norm{u - u_\epsi}{C^0(\clo{\Om_{n+1}})} \le \epsi$$ 
In particular $u_\epsi(x) \le 2 \epsi$ for $x \in \partial \Om_n$.
Choose now a cutoff $\eta$ for $\Om_n$ and $\Om_{n+1}$. Define $\mtilde{u_\epsi} = \eta u_\epsi$. Then $\mtilde{u_\epsi} \in C_c^\infty(\Omclo)$ and $\mtilde{u_\epsi}(x) \le 2 \epsi$ for $x \notin \Om_n$.
For $x \in \Om_n$: $\mod{u(x) - \tilde{u_\epsi}(x)} = \mod{u(x) - u_\epsi(x)} = \epsi$.
\\ For $x \notin \Om_n$: $\mod{u(x) - \tilde{u_\epsi}(x)} \le \epsi + 2\epsi$.
Thus $\norm{u - \tilde{u_\epsi}}{C^0(\Omclo)} \le 3 \epsi$.
\end{proof}

\subsection{Functional analytic tools}
\begin{theorem}[Method of continuity] { \ } \label{TheoMethodCont} \\
 Let $X_1$, $X_2$ be two Banach spaces. $L_0$, $L_1$ linear continous operators from $X_1$ to $X_2$. 
 Set $L_t = (1-t) L_0 + t L_1$.
Assume there exists a constant $c > 0$ such that:
$$ \norm{L_t x}{X_2} \ge c \norm{x}{X_1} $$ for all $x \in X_1$, $t \in [0,1]$.
Then $L_0$ is surjective, iff $L_t$ is surjective for all $t \in [0,1]$.
\end{theorem}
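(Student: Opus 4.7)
My plan is a Banach fixed-point propagation argument along the parameter segment $[0,1]$. The uniform a priori estimate $\norm{L_t x}{X_2} \ge c\norm{x}{X_1}$ immediately shows that each $L_t$ is injective with closed range, so whenever $L_t$ happens to be surjective it is a Banach space isomorphism whose inverse satisfies $\|L_t^{-1}\| \le 1/c$. The implication ``$L_t$ surjective for all $t$ $\Rightarrow$ $L_0$ surjective'' is trivial, so the content is the reverse direction.

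The key local step I would prove is: if $L_s$ is surjective for some $s \in [0,1]$, then $L_t$ is surjective for every $t \in [0,1]$ with $|t-s|$ smaller than a radius $\delta$ that depends only on $c$ and $M := \|L_1 - L_0\|$, and crucially \emph{not} on the base point $s$. Fix $y \in X_2$ and use the identity $L_t = L_s + (t-s)(L_1 - L_0)$ to rewrite $L_t x = y$, after applying the bounded inverse $L_s^{-1}$, as the fixed-point equation
\begin{equation*}
x = T(x) := L_s^{-1} y - (t-s)\, L_s^{-1}(L_1 - L_0) x.
\end{equation*}
Then $T : X_1 \to X_1$ is affine with Lipschitz constant at most $|t-s|\, M/c$. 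Choosing for instance $\delta := c/(2M+1)$ forces the Lipschitz constant to be at most $1/2$ for every $|t-s| \le \delta$, so Banach's fixed point theorem yields the unique solution $x$ of $L_t x = y$. Since $y$ was arbitrary, $L_t$ is surjective.

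To finish, I would run the usual chaining argument. By hypothesis $0 \in S := \{t \in [0,1] : L_t \text{ surjective}\}$, and the propagation step says $S$ is open in $[0,1]$ with a uniform neighborhood radius $\delta$. Applying the step successively at the base points $0, \delta, 2\delta, \ldots$, after $\lceil 1/\delta \rceil$ iterations one reaches $1$, and in fact $S = [0,1]$.

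The only point requiring a moment of care is that the radius $\delta$ must be uniform in $s$, since otherwise the chain could fail to reach $1$; this uniformity is immediate because the estimate $\|L_s^{-1}\| \le 1/c$ holds with the \emph{same} constant $c$ at every $s \in S$, thanks to the uniformity of the hypothesis. Aside from that, the entire proof is a routine contraction mapping argument and presents no real obstacle.
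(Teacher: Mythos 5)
Your proposal is correct; the paper itself does not prove this theorem but refers to Gilbarg--Trudinger (Theorem 5.2, p.~70), and your argument is exactly that standard proof: the uniform bound gives $\|L_s^{-1}\|\le 1/c$ whenever $L_s$ is onto, a contraction-mapping (equivalently Neumann-series) step shows surjectivity propagates over a parameter interval of uniform length $\delta$ depending only on $c$ and $\|L_1-L_0\|$, and finitely many steps chain from $t=0$ to all of $[0,1]$. No gaps; the point you single out, the uniformity of $\delta$ in the base point $s$, is indeed the only place where care is needed.
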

The proof can be found in \cite{GilTru} p.70 Theorem 5.2 (Method of continuity).

\begin{theorem}[Continuity of a linear Operator] { \ } \label{ContLinOp} \\
 Let $(X,\norm{\cdot}{X})$, $(Y,\norm{\cdot}{Y})$ be two Banach spaces. Let $\tau_1$ be a coarser Hausdorff topology of $X$, $\tau_2$ a coarser Hausdorff topology of $Y$. (If $U \subset X$ is open w.r.t to $\tau_1$ then $U$ is also open w.r.t $\norm{\cdot}{X}$.)
Assume that $T: (X,\tau_1) \rightarrow (Y,\tau_2)$ is a continous linear operator. Then $T : (X,\norm{\cdot}{X}) \rightarrow (Y,\norm{\cdot}{Y})$ is also continous.
\end{theorem}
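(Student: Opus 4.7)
The plan is to deduce norm continuity from the closed graph theorem. Since $(X,\|\cdot\|_X)$ and $(Y,\|\cdot\|_Y)$ are Banach spaces and $T$ is linear, it suffices to show that the graph
\[
\Gamma(T) = \{(x, Tx) \mid x \in X\}
\]
is closed in $(X,\|\cdot\|_X) \times (Y,\|\cdot\|_Y)$ with the product norm topology.

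To verify this, I would take a sequence $(x_n, T x_n)$ in $\Gamma(T)$ converging in norm to some $(x, y) \in X \times Y$, i.e.\ $x_n \to x$ in $\|\cdot\|_X$ and $T x_n \to y$ in $\|\cdot\|_Y$. Since $\tau_1$ is coarser than the norm topology on $X$, the identity map $(X,\|\cdot\|_X) \to (X,\tau_1)$ is continuous, so $x_n \to x$ in $\tau_1$. The assumed continuity of $T : (X,\tau_1) \to (Y,\tau_2)$ then yields $T x_n \to T x$ in $\tau_2$. On the other hand, the analogous coarser-topology argument gives $T x_n \to y$ in $\tau_2$. Because $\tau_2$ is Hausdorff, limits in $\tau_2$ are unique, so $T x = y$, i.e.\ $(x,y) \in \Gamma(T)$.

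This shows $\Gamma(T)$ is norm-closed. Since $T$ is linear and $(X,\|\cdot\|_X)$, $(Y,\|\cdot\|_Y)$ are Banach spaces, the closed graph theorem applies and gives continuity of $T : (X,\|\cdot\|_X) \to (Y,\|\cdot\|_Y)$.

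There is essentially no hard step here; the only point that requires care is the use of Hausdorffness of $\tau_2$ to conclude $Tx = y$ (without it, the two $\tau_2$-limits need not coincide) and the explicit invocation of ``$\tau_i$ coarser than the norm topology'' to transport norm-convergent sequences into $\tau_i$-convergent ones. The rest is a direct appeal to the closed graph theorem.
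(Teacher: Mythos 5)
Your proposal is correct and follows essentially the same argument as the paper: show the graph is norm-closed by transporting a norm-convergent sequence to the coarser topologies, use continuity of $T$ in those topologies together with Hausdorffness of $\tau_2$ to identify the limit, and conclude with the closed graph theorem. No differences worth noting.
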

\begin{proof}
 We show, that the graph $\Gamma(T)$ is closed as a subspace of $(X,\norm{\cdot}{X}) \times (Y,\norm{\cdot}{Y})$. 
Let $(u_n)_n$ a sequence in $X$ with $(u_n) \overset{n \rightarrow \infty}{\longrightarrow} u \ \text{w.r.t} \ \norm{\cdot}{X}$ and $(L u_n) \overset{n \rightarrow \infty}{\longrightarrow} v \ \text{w.r.t} \ \norm{\cdot}{Y}$.
 Since $\tau_1$ and $\tau_2$ are coarser we have also:
\begin{eqnarray}
 (u_n) \overset{n \rightarrow \infty}{\longrightarrow} u \ \text{w.r.t} \ \tau_1 \nonumber \\
(L u_n) \overset{n \rightarrow \infty}{\longrightarrow} v \ \text{w.r.t} \ \tau_2 \label{convV}
\end{eqnarray}
Since $L$ is continous from $(X,\tau_1)$ to $(Y,\tau_2)$ we also have:
\begin{eqnarray}
 (L u_n) \overset{n \rightarrow \infty}{\longrightarrow} Lu \ \text{w.r.t} \ \tau_2 \label{convLu}
\end{eqnarray}

Since $\tau_2$ is hausdorff \eqref{convV} and \eqref{convLu} imply: $Lu = v$. Thus $\Gamma(L)$ is closed and therefore by the Closed-Graph theorem $L$ is continous from $(X,\norm{\cdot}{X})$ to $(Y,\norm{\cdot}{Y})$.

\end{proof}

\subsection{Operator semigroup theory}

Throughout this subsection $X$ is a Banach space.
\begin{definition}{Strongly continous (contraction) semigroup } { \ } \label{OpSemigroup} \\
 A family of operators $T_t$, $t \in (0,\infty)$ fulfilling:
  \begin{enumerate}
\item $T_t u \overset{t \rightarrow 0}{\longrightarrow} u \ \text{for all } u \in X$
   \item $T_{t+s} = T_t T_s$
  \end{enumerate}
 is called a \textit{strongly continous operator semigroup} (s.c.s). \\
 If $\norm{T_t}{op} \le 1$ then the family is called a \textit{strongly continous contraction operator semigroup} (s.c.c.s)
\end{definition}

\begin{remark}
One can show, that every strongly continous operator semigroup is exponentially bounded, i.e there exists an $\omega < \infty$ with:
$\norm{T_t}{op} \le \exp(\omega t)$
\end{remark}

\begin{definition}{Dissipative} { \ } \label{DefDissipative} \\
 A linear operator $(L,D(L))$ on $X$ is called dissipative, if for every $x \in D(L)$ there exists a $x' \in \dualitysetx$ with: $\dualitymap{x'}{Lx} \le 0$. 
\end{definition}
\begin{remark}
 Note that by Hahn-Banach there exists always an element $x' \in X'$ with $\dualitymap{x'}{x}   =\norm{x'}{X'}^2 = \norm{x}{X}^2$, thus the definition is senseful.
\end{remark}

\begin{lemma} 
 A linear operator $(L,D(L))$ is dissipative, iff for every $\lambda > 0$ and all $x \in D(L)$:
$$ \norm{(\lambda - L)x}{} \ge \lambda \norm{x}{} $$
\end{lemma}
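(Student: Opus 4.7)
The plan is to prove the two directions of this characterization separately. The forward direction is a short inequality chain; the backward direction requires extracting a functional in the duality set as a weak-$\ast$ limit of normalized duality functionals associated to $(\lambda - L)x$ for large $\lambda$.

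For the forward direction ($\Rightarrow$), fix $x \in D(L)$ with $x \ne 0$ and $\lambda > 0$. By dissipativity choose $x' \in \dualitysetx$ with $\dualitymap{x'}{Lx} \le 0$. Since $\norm{x'}{X'} = \norm{x}{X}$, we have
\[
\norm{x}{X}\,\norm{(\lambda - L)x}{X} \;\ge\; \dualitymap{x'}{(\lambda - L)x} \;=\; \lambda \dualitymap{x'}{x} - \dualitymap{x'}{Lx} \;\ge\; \lambda \norm{x}{X}^2,
\]
using $\dualitymap{x'}{x} = \norm{x}{X}^2$. Dividing by $\norm{x}{X}$ (the $x=0$ case is trivial) gives the stated inequality.

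For the backward direction ($\Leftarrow$), fix $x \in D(L)$, $x \ne 0$. For each $\lambda > 0$ choose $y'_\lambda \in \dualitysetnull{(\lambda - L)x}$ (by Hahn–Banach this set is nonempty), and set $z'_\lambda := y'_\lambda/\norm{y'_\lambda}{X'}$, so that $\norm{z'_\lambda}{X'} = 1$ and $\dualitymap{z'_\lambda}{(\lambda - L)x} = \norm{(\lambda - L)x}{X}$. The hypothesis yields
\[
\lambda \dualitymap{z'_\lambda}{x} - \dualitymap{z'_\lambda}{Lx} \;=\; \norm{(\lambda - L)x}{X} \;\ge\; \lambda \norm{x}{X}.
\]
Rearranging in two ways and using $|\dualitymap{z'_\lambda}{x}| \le \norm{x}{X}$ gives both
\[
\dualitymap{z'_\lambda}{x} \;\ge\; \norm{x}{X} - \tfrac{1}{\lambda}\norm{Lx}{X}
\qquad\text{and}\qquad
\dualitymap{z'_\lambda}{Lx} \;\le\; \lambda\bigl(\dualitymap{z'_\lambda}{x} - \norm{x}{X}\bigr) \;\le\; 0.
\]

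The key step is then a weak-$\ast$ compactness argument: the unit ball of $X'$ is weak-$\ast$ compact (Banach–Alaoglu), so the net $(z'_\lambda)_{\lambda \to \infty}$ has a weak-$\ast$ cluster point $z'$ with $\norm{z'}{X'} \le 1$. Evaluating the two displayed estimates at $x$ and $Lx$ respectively and passing to the limit along a suitable subnet yields $\dualitymap{z'}{x} \ge \norm{x}{X}$ and $\dualitymap{z'}{Lx} \le 0$. Combined with $\norm{z'}{X'} \le 1$ and $|\dualitymap{z'}{x}| \le \norm{z'}{X'}\norm{x}{X}$, this forces $\norm{z'}{X'} = 1$ and $\dualitymap{z'}{x} = \norm{x}{X}$. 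Setting $x' := \norm{x}{X}\, z'$ gives $\norm{x'}{X'} = \norm{x}{X}$ and $\dualitymap{x'}{x} = \norm{x}{X}^2 = \norm{x'}{X'}^2$, so $x' \in \dualitysetx$, and $\dualitymap{x'}{Lx} = \norm{x}{X}\,\dualitymap{z'}{Lx} \le 0$, establishing dissipativity.

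The main obstacle is the backward direction, specifically the weak-$\ast$ limit extraction: one must be careful that the net $(z'_\lambda)$ does have a cluster point with the right estimates surviving in the limit. In the complex case one also needs to check that the real-part inequalities combined with $|\dualitymap{z'}{x}| \le \norm{x}{X}$ actually give equality of the complex pairing, which follows from the sharp bound in the Cauchy–Schwarz-type inequality $|\dualitymap{z'}{x}| \le \norm{z'}{X'}\norm{x}{X}$.
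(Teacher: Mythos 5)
Your proposal is correct. The paper itself states this lemma without proof, as a standard tool from semigroup theory (cf.\ the cited books of Engel--Nagel and Pazy), and your argument is exactly the classical one: the elementary duality estimate for the forward direction, and for the converse the normalization of functionals in $\dualitysetnull{(\lambda-L)x}$ (legitimate since the hypothesis forces $(\lambda-L)x \neq 0$ for $x \neq 0$) followed by a weak-$\ast$ cluster point via Banach--Alaoglu, with the two rearranged inequalities surviving in the limit to produce the required element of $\dualitysetx$.
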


\begin{theorem}[Hille-Yosida] { \ } \label{TheoHilleYosida} \\
 A linear operator $(L,D(L))$ is the generator of a strongly continous contraction semigroup, iff it fulfills the following properties:
  \begin{enumerate}
 \item $D(L)$ is dense in $X$.
 \item $(0,\infty) \subset \rho(L)$
 \item $\norm{R(\lambda,L)}{} \le \frac{1}{\lambda}$ for all $\lambda > 0$
\end{enumerate}

\end{theorem}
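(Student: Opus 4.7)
The plan is to observe that both assertions have essentially already been established in the preceding theorems, and the proof amounts to assembling them. First I would invoke Theorem~\ref{EllipOpSemigroup}, which states that the closure of the operator $(L, D(L))$ with $D(L) = \DL$ generates a strongly continuous semigroup $(T_t)_{t\ge 0}$ on $C^0_0(\Omclo)$. By the very definition of a core (a subspace $D \subset D(\bar L)$ is a core for $\bar L$ if the closure of $L\lvert_D$ coincides with $\bar L$), the fact that $\bar L$ is obtained as the closure of $L$ with domain $D(L)$ is equivalent to saying that $D(L)$ is a core for the generator. Hence the core property is automatic from the generation theorem.

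Next I would address the claim that the semigroup $(T_t)$ solves the Cauchy--Dirichlet problem \eqref{CauchyProblem}. For $u_0 \in D(L)$ the abstract theory of $C_0$-semigroups (see for instance the standard consequence of Hille--Yosida, Theorem~\ref{TheoHilleYosida}) gives that $t \mapsto T_t u_0$ is continuously differentiable in the $C^0_0(\Omclo)$-norm with derivative $\bar L T_t u_0 = L T_t u_0$, and satisfies the initial condition. The Dirichlet boundary condition $T_t u_0 = 0$ on $\partial\Om$ holds because $T_t u_0 \in C^0_0(\Omclo)$, i.e.\ the semigroup acts on functions vanishing on $\partial\Om$. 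This verifies that $(T_t)$ is indeed the resolving semigroup of \eqref{CauchyProblem} in the sense of Assumption~\ref{assSemi}.

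Finally, for the embedding statement I would simply quote Theorem~\ref{Exten}: under the present hypotheses on $\Om$ and on the coefficients $a_{ij}, b_i \in \Ctwoalpha{\Omclo}$, the space $\DL$ admits a continuous linear embedding $E : \DL \to \Ctwoalphac{\R^n}$ such that $\norm{Eu}{C^0_c(\R^n)} = \norm{u}{C^0(\Omclo)}$. Combining this with the core property gives exactly the two conclusions of the theorem.

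There is no real obstacle here, since all the substantive work has been carried out earlier: the generation result relies on dissipativity (Lemma~\ref{Dissipative}), the solvability of the resolvent equation via the interior solution theorem (Theorem~\ref{IntSol}) and smoothness up to the boundary (Theorem~\ref{SmoothBoundaryPortion}); the embedding relies on the local flattening, the transformation of $L$ to one without mixed derivatives (Lemma~\ref{TrafoNoMix}), the reflection function (Lemma~\ref{ReflFunc}, Lemma~\ref{Exten1dim}, Theorem~\ref{ExtenStraight}), and the partition of unity gluing (Theorem~\ref{Exten}). The only minor subtlety worth mentioning explicitly in the write-up is that Assumption~\ref{assSemi} asks for a semigroup which solves \eqref{CauchyProblem}, so one should emphasise that $D(L) \subset D(\bar L)$ and that for $u_0 \in D(L)$ one has $\bar L u_0 = L u_0$, so no ambiguity arises when checking the Cauchy problem pointwise on $D(L)$.
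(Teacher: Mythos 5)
Your proposal does not prove the statement at hand. The statement is the Hille--Yosida theorem (\ref{TheoHilleYosida}) itself: a characterisation of generators of strongly continuous \emph{contraction} semigroups on an abstract Banach space $X$ in terms of density of $D(L)$, the inclusion $(0,\infty)\subset\rho(L)$, and the resolvent bound $\norm{R(\lambda,L)}{}\le\frac{1}{\lambda}$. What you have written instead is, almost verbatim, the argument for Theorem \ref{Core} (and the verification of Assumption \ref{assSemi}): you assemble Theorem \ref{EllipOpSemigroup}, the embedding Theorem \ref{Exten}, and the core property of $D(L)=\DL$ for the concrete elliptic operator on $C^0_0(\Omclo)$. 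None of that bears on the abstract generation theorem, which makes no reference to $\Om$, to $L$ being a differential operator, or to any embedding into $\Ctwoalphac{\R^n}$. Worse, your sketch explicitly invokes ``the standard consequence of Hille--Yosida, Theorem \ref{TheoHilleYosida}'' as an ingredient, so even read charitably the argument is circular with respect to the statement it is supposed to establish.

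A correct proof needs entirely different machinery. For necessity, given a contraction semigroup $(T_t)$ with generator $L$, one shows $D(L)$ is dense (e.g.\ via the averages $\frac{1}{h}\int_0^h T_s x\,\dd s \in D(L)$), and for $\lambda>0$ one verifies that $R(\lambda)x:=\int_0^\infty e^{-\lambda t}T_t x\,\dd t$ is a two-sided inverse of $\lambda-L$, whence $(0,\infty)\subset\rho(L)$ and $\norm{R(\lambda,L)}{}\le\int_0^\infty e^{-\lambda t}\,\dd t=\frac{1}{\lambda}$. For sufficiency, the classical route is the Yosida approximation $L_\lambda:=\lambda L R(\lambda,L)=\lambda^2 R(\lambda,L)-\lambda$: these are bounded, satisfy $\norm{e^{tL_\lambda}}{}\le e^{-\lambda t}e^{\lambda^2 t\norm{R(\lambda,L)}{}}\le 1$, one proves $L_\lambda x\to Lx$ for $x\in D(L)$ (using $\lambda R(\lambda,L)x\to x$), deduces that $e^{tL_\lambda}x$ is Cauchy uniformly on compact $t$-intervals, and defines $T_t x$ as the limit, checking the semigroup law, strong continuity, contractivity, and that the generator is $(L,D(L))$ (closedness of $L$ follows from condition (ii)). The paper itself states \ref{TheoHilleYosida} without proof as a standard tool, referring to the semigroup literature ([EN], [Pazy]); if you are asked to supply a proof, it is this abstract argument, not the elliptic-PDE assembly you gave, that is required.
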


\begin{theorem}[Lumer-Phillips] { \ } \label{TheoLumerPhillips} \\
Let $(L,D(L))$ be a densely defined, closed and dissipative operator. If there exists a $\lambda_0 > 0$ with $rg(\lambda_0 -L) = X$ then $rg(\lambda -L)=X$ for all $\lambda>0$ and $(L,D(L))$ generates a strongly continous contraction semigroup.
\end{theorem}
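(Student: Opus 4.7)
The plan is to assemble this theorem directly from the two main results established earlier in Section~3, namely Theorem~\ref{EllipOpSemigroup} and Theorem~\ref{Exten}; the two statements to be verified -- the core property on the one hand and the contractive embedding into $\Ctwoalphac{\R^n}$ on the other -- are essentially independent, so I would treat them in turn and then combine.

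For the core property, I would invoke Theorem~\ref{EllipOpSemigroup}: under our standing assumptions on $\Om$, $a_{ij}$, $b_i$, $c$, the operator $(L,D(L))$ with $D(L) = \DL$ is closable, and its closure generates a strongly continuous semigroup $(T_t)_{t \ge 0}$ on $\Czero{\Omclo}$. By definition, a subspace is a core for a closed operator $\bar{L}$ precisely when it is dense in $D(\bar{L})$ with respect to the graph norm $\norm{u}{}+\norm{\bar{L}u}{}$. Since the generator of $(T_t)$ is by construction the closure of $(L,D(L))$, the set $D(L)$ is automatically dense in the graph norm of its closure, and is thus a core. I would moreover observe that the semigroup furnished in this way solves the Cauchy-Dirichlet problem \eqref{CauchyProblem} in the sense of Assumption~\ref{assSemi}, which is exactly the ``solving the Cauchy-Dirichlet Problem'' clause appearing in the statement.

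For the contractive embedding, I would simply apply Theorem~\ref{Exten}: since $\partial \Om$ is (sufficiently) smooth and the coefficients $a_{ij}, b_i$ lie in $\Ctwoalpha{\Omclo}$, there exists a continuous linear operator $E : D(L) \to \Ctwoalphac{\R^n}$ such that $\norm{Eu}{\Czero{\R^n}} = \norm{u}{\Czero{\Omclo}}$ for every $u \in D(L)$. This is precisely the embedding required in the theorem, so both conclusions follow by combining the two citations.

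The main obstacle has already been overcome in the preceding sections, not here: the core property depended on the full Schauder-Perron machinery of Section~2 (interior and boundary estimates, existence on balls, Perron's method, attainment of boundary values) coupled with the Lumer-Phillips generation theorem \ref{TheoLumerPhillips}, while the contractive embedding required the careful local boundary-flattening in Lemma~\ref{TrafoNoMix} together with the reflection construction of Lemma~\ref{ReflFunc} and Theorem~\ref{ExtenStraight}, glued globally via a partition of unity. At the level of Theorem~\ref{Core} itself nothing new is needed beyond noting that the hypotheses here match those required by Theorems~\ref{EllipOpSemigroup} and \ref{Exten}.
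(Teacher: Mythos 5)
Your proposal does not address the statement in question. The statement is the abstract Lumer--Phillips theorem from the appendix: for a densely defined, closed, dissipative operator $(L,D(L))$ on a Banach space $X$, the single range condition $rg(\lambda_0 - L) = X$ for one $\lambda_0 > 0$ already forces $rg(\lambda - L) = X$ for \emph{all} $\lambda > 0$, and $(L,D(L))$ then generates a strongly continuous contraction semigroup. What you prove instead is (an outline of) Theorem~\ref{Core}, the PDE-specific statement about the elliptic operator on $\Czero{\Omclo}$ and the contractive embedding of $\DL$ into $\Ctwoalphac{\R^n}$, by citing Theorems~\ref{EllipOpSemigroup} and \ref{Exten}. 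None of that bears on the claim at hand; worse, your argument explicitly invokes ``the Lumer--Phillips generation theorem \ref{TheoLumerPhillips}'' as an ingredient, so as a proof of that very theorem it would be circular.

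A proof of the actual statement runs along standard semigroup-theoretic lines, entirely at the abstract level: dissipativity gives the estimate $\norm{(\lambda - L)x}{} \ge \lambda \norm{x}{}$ for all $\lambda>0$ and $x \in D(L)$ (the lemma preceding Hille--Yosida in the appendix), so each $\lambda - L$ is injective and, since $L$ is closed, has closed range; hence the set $\Lambda = \{\lambda > 0 \ | \ rg(\lambda - L) = X\}$ is closed in $(0,\infty)$, and it is open because for $\lambda \in \Lambda$ the bound $\norm{R(\lambda,L)}{} \le \frac{1}{\lambda}$ lets one invert $\mu - L = (\lambda - L)(I + (\mu-\lambda)R(\lambda,L))$ by a Neumann series whenever $\mod{\mu - \lambda} < \lambda$. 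Since $\Lambda \ni \lambda_0$ is nonempty and $(0,\infty)$ is connected, $\Lambda = (0,\infty)$, i.e. $(0,\infty) \subset \rho(L)$ with $\norm{R(\lambda,L)}{} \le \frac{1}{\lambda}$, and Theorem~\ref{TheoHilleYosida} (Hille--Yosida) yields the contraction semigroup. (The paper itself states this theorem without proof in the appendix, as a known result; but a proof attempt must engage with this abstract argument, not with the concrete elliptic setting of Section~3.)
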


\begin{theorem}[Closure of an essential m-dissipative operator] { \ } \label{TheoClosureOp}  \\
 Let $X$ be a Banach space, $(L,D(L))$ a densely defined dissipative operator. Then the following holds:
 \begin{enumerate}
  \item $(L,D(L))$ is closable
  \item $rg(\lambda - \clo{L}) = \clo{rg(\lambda - L)}$
  \item If there exists a $\lambda > 0$ such that: $\clo{rg(\lambda - L)} = X$ then the closure $(\clo{L},D(\clo{L}))$ generates a strongly continous contraction semigroup.
 \end{enumerate}
\end{theorem}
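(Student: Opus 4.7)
The plan is to dispatch the three assertions separately, relying throughout on the resolvent characterisation of dissipativity $\|(\mu - L)x\| \geq \mu\|x\|$ ($\mu > 0$, $x \in D(L)$) from the lemma preceding the theorem, and finally invoking Lumer--Phillips (\ref{TheoLumerPhillips}) for the semigroup assertion.

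For (i), the standard closability criterion reduces to the following: if $x_n \in D(L)$ with $x_n \to 0$ and $L x_n \to y$, then $y = 0$. Fix an arbitrary $z \in D(L)$ and $\alpha > 0$, and apply the resolvent inequality with $\mu = \alpha$ to $\alpha x_n + z \in D(L)$:
\[
 \|\alpha(\alpha x_n + z) - L(\alpha x_n + z)\| \;\geq\; \alpha\|\alpha x_n + z\|.
\]
Letting $n \to \infty$ (so $\alpha^2 x_n \to 0$ and $\alpha L x_n \to \alpha y$) and then dividing by $\alpha$ gives $\|z - y - L z / \alpha\| \geq \|z\|$. Sending $\alpha \to \infty$ yields $\|z - y\| \geq \|z\|$ for every $z \in D(L)$, hence for every $z \in X$ by density. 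Testing this on $z = 2y$ forces $\|y\| \geq 2\|y\|$, so $y = 0$ and $L$ is closable.

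For (ii), the inclusion $\operatorname{rg}(\lambda - \clo{L}) \subset \overline{\operatorname{rg}(\lambda - L)}$ is immediate by approximation: for $x \in D(\clo{L})$ pick a defining sequence $x_n \to x$ with $L x_n \to \clo{L}x$, so that $(\lambda - L)x_n \to (\lambda - \clo{L})x$. Conversely, suppose $y_n = (\lambda - L)x_n \to y$; dissipativity of $L$ gives $\lambda\|x_n - x_m\| \leq \|y_n - y_m\|$, so $(x_n)$ is Cauchy and converges to some $x \in X$, and then $L x_n = \lambda x_n - y_n$ converges as well. By the definition of the closure, $x \in D(\clo{L})$ with $(\lambda - \clo{L})x = y$.

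For (iii), $\clo{L}$ is closed by construction, densely defined because $D(L) \subset D(\clo{L})$, and dissipative because the inequality $\|(\mu - L)x_n\| \geq \mu\|x_n\|$ passes to the limit along any defining sequence of $x \in D(\clo{L})$. Combining the hypothesis $\overline{\operatorname{rg}(\lambda - L)} = X$ with (ii) yields $\operatorname{rg}(\lambda - \clo{L}) = X$, and Lumer--Phillips then delivers the strongly continuous contraction semigroup generated by $\clo{L}$. I expect the only genuinely subtle step to be the scaling in part (i): the seemingly ad hoc choice $\mu = \alpha$ applied to $\alpha x_n + z$, followed by dividing by $\alpha$ and sending $\alpha \to \infty$, is what finally extracts an inequality involving $y$ alone from the merely quadratic-looking resolvent estimate; the other two parts are routine Cauchy-sequence bookkeeping around the graph closure.
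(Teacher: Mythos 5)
Your proof is correct: the scaling trick in (i) (applying dissipativity to $\alpha x_n + z$ and letting $n\to\infty$, then $\alpha\to\infty$), the Cauchy-sequence argument in (ii), and the passage to the closure plus Lumer--Phillips (\ref{TheoLumerPhillips}) in (iii) are exactly the standard argument. The paper itself states this theorem only as a tool in the appendix without proof (it is the classical result found in Engel--Nagel or Pazy), and your argument coincides with that standard proof, so there is nothing further to reconcile.
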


\begin{theorem}[Rescaling of a semigroup and generator] { \ } \label{TheoDissipOmega} \\
 Let $(L,D(L))$ be a densely defined operator. If there exists an $\omega > 0$ and an $\lambda > \omega$ such that the operator $\tilde{L} := L - \omega$ is dissipative and $\clo{rg(\lambda-L)} =  X $ then the closure of $(L,D(L))$ generates a strongly continous semigroup $(T_t)$ with $\| T_t \| \le \exp(\omega t)$.
\end{theorem}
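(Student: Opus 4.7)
The plan is to reduce the statement to the corresponding result for contraction semigroups (Theorem \ref{TheoClosureOp}) by working with the shifted operator and then rescaling the resulting semigroup by the exponential factor $e^{\omega t}$.

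First I would verify that $\tilde{L}=L-\omega$ satisfies all hypotheses of Theorem \ref{TheoClosureOp}. Dense definition of $\tilde L$ is immediate from density of $D(L)$, since $D(\tilde L)=D(L)$. Dissipativity of $\tilde L$ is assumed. For the range condition it suffices to observe that for any $\mu>0$ we have $\mu-\tilde L=(\mu+\omega)-L$, and in particular with $\mu:=\lambda-\omega>0$ one obtains $\mu-\tilde L=\lambda-L$, so that $\clo{\mathrm{rg}(\mu-\tilde L)}=\clo{\mathrm{rg}(\lambda-L)}=X$. Hence Theorem \ref{TheoClosureOp} applies: $\tilde L$ is closable and its closure $(\clo{\tilde L},D(\clo{\tilde L}))$ generates a strongly continuous contraction semigroup, which I denote $(\tilde T_t)_{t\ge 0}$, with $\|\tilde T_t\|\le 1$.

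Next I would transfer this back to $L$. Since $\omega\,\mathrm{Id}$ is a bounded everywhere defined operator, closure commutes with the additive shift, i.e.\ $\clo{L-\omega}=\clo{L}-\omega$, so in particular $L$ itself is closable and $D(\clo L)=D(\clo{\tilde L})$. Define
\begin{equation*}
T_t:=e^{\omega t}\tilde T_t,\qquad t\ge 0.
\end{equation*}
The semigroup law $T_{t+s}=T_tT_s$ and strong continuity at $t=0$ are inherited from $(\tilde T_t)$, and the bound $\|T_t\|\le e^{\omega t}\|\tilde T_t\|\le e^{\omega t}$ is immediate. It remains to identify the generator. For $u\in D(\clo{\tilde L})$,
\begin{equation*}
\tfrac{1}{t}(T_tu-u)=\tfrac{e^{\omega t}-1}{t}\tilde T_tu+\tfrac{1}{t}(\tilde T_tu-u)\;\xrightarrow{t\downarrow 0}\;\omega u+\clo{\tilde L}u=(\clo L)u,
\end{equation*}
and conversely if this limit exists for some $u$ then the same identity shows $u\in D(\clo{\tilde L})$; hence $(T_t)$ is generated by $\clo L$.

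I do not expect any genuine obstacle: the only points requiring care are the elementary algebraic identity $\mu-\tilde L=(\mu+\omega)-L$ used to meet the range hypothesis of Theorem \ref{TheoClosureOp}, and the observation that the closure of $L-\omega$ equals $\clo L-\omega$ (which follows because perturbation by a bounded operator does not alter closures). Both are routine, so the argument is essentially a bookkeeping exercise that moves the problem to the dissipative setting, invokes the already-stated closure theorem, and then undoes the shift on the semigroup side via multiplication by $e^{\omega t}$.
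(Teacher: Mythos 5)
Your argument is correct. The paper states Theorem \ref{TheoDissipOmega} in the appendix without proof (it is used as a cited tool), so there is no in-text argument to compare with; what you give is the standard derivation and exactly the route the surrounding material suggests: shift to $\tilde L = L-\omega$, check the hypotheses of Theorem \ref{TheoClosureOp} (dense domain, dissipativity, and the identity $\mu-\tilde L=\lambda-L$ with $\mu=\lambda-\omega>0$, which yields the range condition), obtain the contraction semigroup $(\tilde T_t)$ generated by $\clo{\tilde L}$, and rescale via $T_t=e^{\omega t}\tilde T_t$. The only points needing care are the ones you address: that the bounded shift commutes with taking closures, so $\clo{\tilde L}=\clo{L}-\omega$, and the two-sided identification of the generator of $(T_t)$ as $\clo{\tilde L}+\omega=\clo{L}$, both of which you carry out correctly.
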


\subsection{Tools from multidimensional Analysis}

\begin{lemma}[Differentiability of the Operator Inversion] { \ } \label{DiffInv} \\
 Let $A \in \R^{n \times n}$ with $Det(A) \ne 0$. Then there exists a neighborhood (w.r.t to the Operator topology) of $A$ such that the mapping $$Inv : V(A) \rightarrow GL(n)$$
$$ X \rightarrow X^{-1} $$ is holomorphic.
\end{lemma}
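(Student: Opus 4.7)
The plan is to produce a local power-series representation of the inverse map around $A$, from which holomorphicity (in the matrix entries) is immediate.

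First I would observe that the existence of an open neighborhood $V(A)$ contained in $GL(n)$ is automatic: the determinant is a polynomial in the matrix entries, hence continuous, and $\det(A)\ne 0$, so the preimage $\{X\in\R^{n\times n}\ |\ \det(X)\ne 0\}$ is open and contains an operator-norm ball around $A$.

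Next I would shrink this ball to the explicit neighborhood
$$V(A)\ :=\ \bigl\{X\in\R^{n\times n}\ \bigl|\ \|X-A\|_{\mathrm{op}}\cdot\|A^{-1}\|_{\mathrm{op}}<1\bigr\}.$$
For $X\in V(A)$, setting $H:=A-X$, the factorization $X=A\bigl(I-A^{-1}H\bigr)$ combined with $\|A^{-1}H\|_{\mathrm{op}}<1$ and the Neumann series yields
$$X^{-1}\ =\ \sum_{k=0}^{\infty}(A^{-1}H)^{k}\,A^{-1}.$$
Each summand is a homogeneous polynomial of degree $k$ in the entries of $H=A-X$, and the series converges absolutely and uniformly on compact subsets of $V(A)$. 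Hence $\mathrm{Inv}$ is given on $V(A)$ by a convergent matrix-valued power series in $X-A$, which is precisely the statement of (complex) analyticity once one passes from $\R^{n\times n}$ to $\C^{n\times n}$ and notices that the same estimates persist. Alternatively, Cramer's rule $X^{-1}=\det(X)^{-1}\,\mathrm{adj}(X)$ exhibits every entry of $X^{-1}$ as a rational function of the entries of $X$ with non-vanishing denominator on $V(A)$, which gives holomorphicity even more directly.

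There is no substantive obstacle: the two minor points to watch are, first, that $\|A^{-1}H\|_{\mathrm{op}}<1$ really does hold on the ball above, which is immediate from submultiplicativity; and second, the interpretation of \emph{holomorphic} in a real statement, which I would handle by complexifying and noting that all formulas involved are polynomial or rational with integer coefficients, so they make sense on $\C^{n\times n}$ without change.
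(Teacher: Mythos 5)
Your argument is correct and follows the same route as the paper, which simply invokes the Neumann series for $A(\Id + A^{-1}X)$ with $\|X\|_{\mathrm{op}}$ small; you have just spelled out the explicit neighborhood, the convergence, and the passage to analyticity. The Cramer's-rule remark is a valid (and arguably cleaner) alternative, but the core of your proof is the same as the paper's.
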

This can be seen using the Neumann-Series for $(A+X) = A(\Id + A^{-1}X)$ for $\norm{X}{op}$ sufficiently small.

\begin{theorem}[Local diffeomorphism theorem] { \ } \label{TheoDiff} \\
 Let $\Om \subset \R^n$ be an open set, $F : \Om \rightarrow \R^n$ a mapping, which is $\Ctwoalphaf$-smooth in $\Om$. Let $x_0 \in \Om$. If $Det (D F)(x_0) \ne 0$ then there exists a neighborhood $U$ of $x_0$, $V$ of $F(x_0)$ and a mapping $G : V \rightarrow U$ with $F \circ G = \Id_V$, $G \circ F = \Id_U$. Moreover the mapping is $\Ctwoalphaf$-smooth in $V$, i.e. $F$ is a $\Ctwoalphaf$-diffeomorphism.
\end{theorem}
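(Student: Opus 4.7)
The plan is to reduce to a normalized form, establish local bijectivity via a contraction mapping argument, and then bootstrap the regularity of the inverse using the formula $DG = (DF \circ G)^{-1}$ together with Lemma \ref{DiffInv}.

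First I would reduce the problem. By translating I may assume $x_0 = 0$ and $F(0) = 0$. Set $A = DF(0)$, which is invertible by hypothesis. Replacing $F$ by $A^{-1} \circ F$ does not change the $\Ctwoalphaf$-regularity of either $F$ or any eventual inverse (composition with a linear map preserves $C^{k,\alpha}$), so I may additionally assume $DF(0) = \Id$. Define $\Phi(x) := x - F(x)$. Then $D\Phi(0) = 0$, and since $D\Phi$ is continuous, I can fix $r>0$ small enough so that $\| D\Phi(x) \|_{op} \le \tfrac{1}{2}$ on $\clo{B_r(0)}$ and moreover $\det(DF(x)) \ne 0$ there.

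Next, I would prove local bijectivity via Banach's fixed point theorem. For $\mod{y} \le \tfrac{r}{2}$, define $T_y(x) := y + \Phi(x) = y + x - F(x)$. By the mean value inequality, $T_y$ is a $\tfrac12$-contraction on $\clo{B_r(0)}$, and $\mod{T_y(x)} \le \mod{y} + \mod{\Phi(x) - \Phi(0)} \le \tfrac{r}{2} + \tfrac12 \mod{x} \le r$, so $T_y$ maps $\clo{B_r(0)}$ into itself. The unique fixed point $x = G(y)$ satisfies $F(x) = y$. Setting $V := B_{r/2}(0)$ and $U := G(V) \cap B_r(0)$ (which one checks is open), the contraction estimate $\mod{G(y_1) - G(y_2)} \le 2 \mod{y_1 - y_2}$ follows directly from $\mod{x_1 - x_2} \le \mod{y_1 - y_2} + \mod{\Phi(x_1) - \Phi(x_2)} \le \mod{y_1-y_2} + \tfrac12 \mod{x_1-x_2}$, so $G$ is in particular continuous.

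Now I would upgrade continuity to $C^1$-regularity. Fix $y_0 \in V$ and write $x_0 = G(y_0)$. For $y$ near $y_0$, set $x = G(y)$ and use $y - y_0 = F(x) - F(x_0) = DF(x_0)(x - x_0) + o(\mod{x-x_0})$. Since $DF(x_0)$ is invertible and $\mod{x-x_0} \le 2\mod{y-y_0}$, one obtains $x - x_0 = (DF(x_0))^{-1}(y - y_0) + o(\mod{y-y_0})$, i.e.\ $G$ is differentiable at $y_0$ with $DG(y_0) = (DF(x_0))^{-1}$. Continuity of $DG$ then follows by composing the continuous map $y \mapsto DF(G(y))$ with the inversion operator $\Inv$ from Lemma \ref{DiffInv}.

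Finally I would bootstrap to $C^{2,\alpha}$. Since $F \in \Ctwoalphaf(\Om)$, the map $y \mapsto DF(G(y))$ lies in $C^{0,\alpha}$ whenever $G$ lies in $C^{0,\alpha}$, and in $C^{1,\alpha}$ whenever $G$ lies in $C^{1,\alpha}$, by the usual chain rule and the fact that $\Ctwoalphaf$ is closed under composition with $C^{k,\alpha}$ for $k \le 2$. Because $\Inv$ is holomorphic on a neighborhood of $DF(x_0)$ in $GL(n)$ (Lemma \ref{DiffInv}), composition with $\Inv$ preserves $C^{k,\alpha}$. Iterating the identity
\[ DG(y) = \Inv\bigl(DF(G(y))\bigr), \]
one finds first that $DG$ is continuous (so $G \in C^1$), then $C^{0,\alpha}$ (so $G \in C^{1,\alpha}$), and finally $C^{1,\alpha}$ (so $G \in C^{2,\alpha}$), yielding $G \in \Ctwoalphaf(V)$.

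The main obstacle is the last bootstrapping step: one has to be careful that the composition $DF \circ G$ genuinely improves regularity at each stage. This is where the holomorphy of the inversion map (needed so that $\Inv$ does not destroy Hölder regularity) and the fact that our coefficients of $DF$ are themselves $\Ctwoalphaf$ (so the outer function in the composition is sufficiently regular) both play an essential role.
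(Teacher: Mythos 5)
Your proof is correct, but it takes a genuinely different and more self-contained route than the paper. The paper simply cites the classical $C^1$ inverse function theorem (Amann--Escher) to obtain $U$, $V$ and $G \in C^1(V)$, and then concentrates entirely on the regularity upgrade: it differentiates the identity $F \circ G = \Id$ once to get $\pdy{i} G (y) = ((D F)(G(y)))^{-1} e_i$, and a second time to get an explicit formula for $\pdy{i_1}\pdy{i_2} G$ in terms of $((DF)\circ G)^{-1}$, $D^2F \circ G$ and the first derivatives of $G$, reading off Hölder continuity of $D^2 G$ directly from that formula together with Lemma \ref{DiffInv}. You instead prove the $C^1$ statement from scratch (normalization $DF(x_0)=\Id$, Banach fixed point, Lipschitz bound $\mod{G(y_1)-G(y_2)} \le 2\mod{y_1-y_2}$, pointwise differentiability) and then bootstrap the single identity $DG = \mathrm{Inv}(DF \circ G)$. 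Both arguments hinge on Lemma \ref{DiffInv}; yours buys independence from the cited theorem and makes explicit the Lipschitz control on $G$, while the paper's is shorter and yields closed formulas for $D^2G$ (which also make the remark about extending to $C^{k,\alpha}$ immediate). One small caution on your final step: the blanket claim that $\Ctwoalphaf$ is ``closed under composition with $C^{k,\alpha}$'' is false if the inner map is merely $C^{0,\alpha}$ with $\alpha<1$ (in general one only gets exponent $\alpha^2$); your argument survives because the outer maps ($DF$, $D^2F$ composed after a Lipschitz $G$, and the locally Lipschitz inversion map) and the inner map $G$ are Lipschitz at each stage of the iteration, so the Hölder exponent is never degraded --- it would be worth saying this explicitly rather than appealing to the general composition statement.
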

\begin{proof}
By the well-known Inverse-Function theorem (see \cite{AmEsch}, Theorem 7.3, page 223) there exist a neighborhood $U$ of $(x_0)$, $V$ of $(F(x_0))$ and a $G \in C^1(V)$ such that $F : U \rightarrow V$ is bijective,  $ F \circ G = \Id_V$, $G \circ F = \Id_U$ and $(D F)(x) \in GL(n)$ for $x \in U$. 
Hence it is left to show that the first derivatives of $G$ are also differentiable and the second derivatives are Hölder continous.
We denote the variables in $V$ by $y$, and in $U$ by $x$.
We have $F \circ G (y) = y$ for $y \in V$. Thus $ \pdy{i} (F_j \circ G) = \delta_{ij} \ \text{for all } \ i \le n$ using the chain rule and product rule respectively we get for $x \in U$:
$$ \pdy{i} (F_j \circ G) (y)= \mysum{k}{n} \left(\pdx{k} F_j \right) \left( G(y) \right) (\pdy{i} G_k)(y) = \delta_{ij} \ \text{for all} \ j $$ \label{DerOneContDiff}
Thus: $ \pdy{i} G (y) = ((D F) (x))^{-1} e_i $. $F$ is twice differentiable and the Operator-Inversion is holomorphic, hence the first derivatives of $G$ are also differentiable.
\begin{multline} \pdy{i_1} \pdy{i_2} (F_j \circ G) (y) = \pdy{i_1} \mysum{k}{n} (\pdx{k} F_j )(G(y)) (\pdy{i_2} G_k)(y)  \\
= \mysum{k}{n} (\pdx{k} F_j)(G(y)) (\pdy{i_1} \pdy{i_2} G_k)(y) + \mysum{k}{n} \mysum{k_1}{n} (\pdx{k_1} \pdx{k} F_j) (G(y)) (\pdy{i_1} G_k)(y) (\pdy{i_2} G_{k_1})(y)  \\
= 0
\end{multline}
With $(D F)_{jk} = \pdx{k} F_j$
$$ (D F) (G(y)) (\pdy{i_1} \pdy{i_2} G)(y) = -\mysum{k}{n} \mysum{k_1}{n} (\pdx{k_1} \pdx{k} F)(G(y))( \pdy{i_1} G_k)(y) (\pdy{i_2} G_{k_1})(y) $$ 
thus:
$$ (\pdy{i_1} \pdy{i_2} G)(y) = ((D F) (G(y)))^{-1}( -\mysum{k}{n} \mysum{k_1}{n} (\pdx{k_1} \pdx{k} F)(G(y)) (\pdy{i_1} G_k)(y) (\pdy{i_2} G_{k_1})(y) ) $$ \label{DerTwo}

The terms $(\pdx{k_1} \pdx{k} F)(G(y)) (\pdy{i_1} G_k)(y) (\pdy{i_2} G_{k_1})(y)$ are Hölder continous since $F \in \Ctwoalphaf$ and $\pdy{i} G_k$ are continously differentiable. Thus the right hand side of \eqref{DerTwo} is Hölder continous.
\end{proof}

\begin{remark}
 One can easily see, that this procedure can be applied to higher derivatives for sufficiently smooth $F$. Thus if $F$ is $C^{k,\alpha}$ then $G$ is also $C^{k,\alpha}$-smooth.
\end{remark}

\begin{theorem}[Differentiability along the boundary and one sided limit] { \ } \label{DiffBoundary} \\
 Let $R>0$, $Q = (-R,R)^n \subset \R^n$ be a cuboid. Let $u \in C^{1}(Q^{0,+})$,
 denote by $u_i$ the continous extension of the partial derivative $\pdx{i} u$ for $i \le n$, and $u_{ij}$ the continous extension of the second partial derivative $i,j \le n$.
 then for $x_0 \in Q^0$ we have:
 \begin{enumerate} \label{enuOne}
  \item For $i \ne n$ $u$ is differentiable in the i-th coordinate and $(\pdx{i} u) (x_0) = u_i(x_0) \label{deri}$
  \item The one sided limit $\pdx{n}^+ u (x_0)$ exists and $\pdx{n}^+ u(x_0) = u_n(x_0) \label{dern}$
  \end{enumerate}
 If $u \in C^{2}(Q^{0,+})$, denote by $u_{ij}$ the continous extension of the second partial derivative $i,j \le n$, then for $i,j \ne n$:
 \begin{enumerate}
  \item The second partial derivative in directions $i$,$j$ exist and $(\pdx{i}\pdx{j} u) (x_0) = u_{ij}(x_0)$.
  \item $\pdx{n}^{+} u$ is differentiable in direction $i$ and $\pdx{i} \pdx{n}^+ u (x_0) = u_{in}(x_0)$
  \item $\pdx{n}^+ \pdx{i} u (x_0) $ exists and $\pdx{n}^+ \pdx{i} u (x_0) = u_{in}(x_0)$
 \end{enumerate}
 in particular we have $\pdx{i} \pdx{n}^+ u (x_0) = \pdx{n}^+ \pdx{i} u (x_0)$
\end{theorem}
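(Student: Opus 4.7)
The plan is to reduce every statement to the classical Fundamental Theorem of Calculus on a segment lying strictly in the interior $Q^+$ and then pass to the boundary $Q^0$ via continuity of $u$ and of the continuous extensions $u_i, u_{ij}$.

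First I would prove the $C^1$ statements at a fixed $x_0 \in Q^0$. In direction $e_i$ with $i \ne n$, the segment $[x_0, x_0+he_i]$ lies in $Q^0$. For $\epsi > 0$ small, the parallel segment $[x_0+\epsi e_n, x_0+he_i+\epsi e_n]$ lies in $Q^+$, where $u$ is genuinely $C^1$, so the classical FTC yields
$$u(x_0+he_i+\epsi e_n) - u(x_0+\epsi e_n) = \int_0^h u_i(x_0+te_i+\epsi e_n)\,dt.$$
Letting $\epsi \searrow 0$ the left side tends to $u(x_0+he_i)-u(x_0)$ by continuity of $u$ on $Q^{0,+}$, and the right side tends to $\int_0^h u_i(x_0+te_i)\,dt$ by uniform continuity of $u_i$ on a compact neighborhood of the segment. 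Dividing by $h$ and using continuity of $u_i$ at $x_0$ gives statement (1). For (2), fix $h>0$ small and apply FTC on $[x_0+\epsi e_n, x_0+he_n] \subset Q^+$ for $0<\epsi<h$:
$$u(x_0+he_n) - u(x_0+\epsi e_n) = \int_\epsi^h u_n(x_0+te_n)\,dt.$$
Passing $\epsi \to 0$ gives $u(x_0+he_n)-u(x_0) = \int_0^h u_n(x_0+te_n)\,dt$, and dividing by $h$ with $h \to 0^+$ yields $\pdx{n}^+ u(x_0)=u_n(x_0)$.

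For the $C^2$ statements I would reduce each one to parts (1) or (2) applied to a first partial as the new function. For (i), apply (1) to $v:=u_j$ with $j\ne n$: since $u\in C^2$, $v$ lies in $C^1(Q^+)$ with $\pdx{i} v = u_{ij}$ extending continuously to $Q^{0,+}$, so (1) gives $\pdx{i} v(x_0) = u_{ij}(x_0)$. For (ii), note that (2) applied to $u$ shows $\pdx{n}^+ u \equiv u_n$ on $Q^0$, while $\pdx{n}^+ u = \pdx{n} u = u_n$ in $Q^+$; hence $\pdx{n}^+ u = u_n$ throughout $Q^{0,+}$, and applying (1) to $u_n$ (whose partial in $x_i$, $i\ne n$, extends continuously to $u_{in}$) gives $\pdx{i}\pdx{n}^+ u(x_0)=u_{in}(x_0)$. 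For (iii), use that for $h>0$, $\pdx{i} u(x_0+he_n)=u_i(x_0+he_n)$ in $Q^+$ while $\pdx{i} u(x_0)=u_i(x_0)$ by (1); applying (2) to $v:=u_i$ gives $\pdx{n}^+\pdx{i} u(x_0)=\pdx{n}^+ u_i(x_0)=u_{in}(x_0)$. The commutation $\pdx{i}\pdx{n}^+ u(x_0)=\pdx{n}^+\pdx{i} u(x_0)$ is then immediate from their common value $u_{in}(x_0)$.

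The main obstacle is essentially bookkeeping rather than genuine difficulty: one must be careful that the continuous extensions $u_i$, $u_{ij}$ are used in place of the interior partials at the critical moment, and that the inductive step in the $C^2$ case is correctly set up so that the already-proved $C^1$ statements apply with $u_j$ or $u_n$ in the role of $u$. No compactness or regularity surprises arise; everything follows from uniform continuity of continuous functions on compact subsets of $Q^{0,+}$ together with the FTC on interior segments.
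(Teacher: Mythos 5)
Your proposal is correct and takes essentially the same route as the paper: you shift the increment into the interior $Q^+$ by $\epsi e_n$, apply a one-dimensional calculus theorem on the interior segment (the fundamental theorem of calculus where the paper uses the mean value theorem), pass to the limit $\epsi \rightarrow 0$ via continuity of $u$ and of the extensions $u_i$, $u_n$, and then reduce the second-order claims to the first-order statements applied to $u_j$, $u_n$, $u_i$, exactly as the paper does. The FTC-versus-MVT choice is only a cosmetic difference, so nothing further needs to be compared.
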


\begin{proof}
Denote by $e_n$ the n-th unit vector, $u_n$ the continous extension of the interior derivative to the boundary. By virtue of the mean value theorem there exists $h' \in (0,h)$ such that:

$$ \frac{u(x_0 + he_n) - u(x_0)}{h} \overset{MVT}{=} \pdx{n} u(x_0 + h' e_n) = u_n (x_0 + h' e_n)  $$
For $h \rightarrow 0$ we have that $h' \rightarrow 0$ and thus since $u_n$ is continous in $Q^{0,+}$:
$$ \pdx{n}^+ u(x_0) = \underset{h \rightarrow 0}{\lim} u_n(x_0 + h' e_n) = u_n(x_0)$$
Now let $i \ne n$:
For $h$ small enough such that $x_0 + h e_i$ and $x_0 - h e_i$ are contained in $Q^0$ we have for every $\epsi>0$ 
\begin{multline*}
u(x_0 + he_i) -u(x_0) \\
=  u(x_0 +he_i) -u(x_0 +he_i + \epsi e_n) + u(x_0 +he_i + \epsi e_n) - u(x_0 + \epsi e_n) - u(x_0 + \epsi e_n) - u(x_0) 
\end{multline*}
By virtue of the mean value theorem there exists $\epsi_1,\epsi_2 \in (0,\epsi), h' \in (0,h)$:
\begin{multline*}
 = \pdx{n} u(x_0 + h e_i + \epsi_1 e_n)\epsi + \pdx{i} u(x_0 + h' e_i + \epsi e_n) h + \pdx{n} u(x_0 + \epsi_2 e_n) \epsi 
\end{multline*}
Thus:
\begin{multline*}
 \lvert \frac{u(x_0 + h e_i) - u(x_0)}{h} - u_i(x_0) \lvert \le \underset{\epsi \rightarrow 0}{\limsup} (\lvert  \frac{u_n(x_0 + h e_i + \epsi_1 e_n) + u_n(x_0 + \epsi_2 e_n)}{h}\epsi \lvert + \\
 \mod{ u_i(x_0 + h'e_i + \epsi e_n) - u_i(x_0)})    
\end{multline*}
Since $u_n$ is continous in $Q^{0,+}$, $u_n$ is bounded and therefore the first term of the right hand side converges to 0. Since $u_i$ is uniform continous we have:
$$
 \le \supr{h' \in (0,h)}{\mod{ u_i(x_0 + h'e_i) - u_i(x_0)} }
$$
which tends to zero for $h \rightarrow 0$.

Hence:
$$ \pdx{i} u(x_0) = \underset{\mod{h} \rightarrow 0}{\lim} \frac{u(x_0 + h e_i) - u(x_0)}{h} = u_i(x_0) $$

The statements for the second derivative follow by applying \ref{enuOne}.1 to $u_i$ and $u_n$ respectively, the third statement follows by applying \ref{enuOne}.2 to $u_i$. 
\end{proof}

\section{List of symbols}
\begin{enumerate}
 \item $\N,\R$ the set of natural numbers, the set of real numbers
 \item $C^{k,\alpha}(\Om)$ the space of k-times continously differentiable functions on a domain $\Om$
 \item $C^{k,\alpha}_c(\Om)$ the subspace of $C^{k,\alpha}(\Om)$ of functions with compact support in $\Om$ ($\Om$ a domain) 
\item $C^{k,\alpha}_0(\Om)$ the subspace of $C^{k,\alpha}(\Om)$ of functions, which are zero on the boundary of $\Om$ ($\Om$ a domain) 
 \item $C^{k,\alpha}(\clo{\Om})$ the space of all $C^{k,\alpha}(\Om)$-functions such that the derivatives admit a continous extension to the boundary and the second derivatives admit a hoelder continous extension. 
 \item $C_0(\R^n)$ the spaces of continous functions vanishing at infinity.
 \item $L^p(\R^n)$ the space of p-integrable measurable functions.
 \item $\dualitymap{\cdot}{\cdot}$ the dual pairing in Banach spaces
 \item $\dualityset$ the set of normalized tangent functionals
 \item $\eucp{}{}$ the euclidean scalar product on $\R^n$
 \item $\Hess(u)$ the Hessian matrix of a function u, $(H(u))_{i,j} = \partial_i \partial_j u$
 \item $ (D F) $ the Jacobi matrix of a differentiable map $F : \R^n \rightarrow \R^n$, $(DF)_{jk} = \partial_k F_j$
 \item $ GL(n) $ the group of all invertible $\R^n \times \R^n$-matrices
 \item $ e_i $ the i-th unit vector
 \item $ f \ast g$ the convolution of a $L^1(\R^n)$-function $f$ and a $L^p(\R^n)$-function $g$, i.e. $ (f \ast g)(x) = \int_{\R^n} f(x-y) g(y) dy$ 
 \item $A \subset \subset B$ for $B$ compact, $A$ is compact contained in $B$, i.e. $\clo{A} \subset B$
 \item $M^+ = M \cap (\R^{n-1} \times \R^+)$, for $M \subset \R^n$ 
 \item $M^- = M \cap (\R^{n-1} \times \R^-)$
 \item $M^0 = M \cap (\R^{n-1} \times {0})$
 \item $M^{0,+} = M \cap (\R^{n-1} \times \R^+_0)$
 \item $u^+$,$u^-$ the positive and negative part of a function respectively.
\end{enumerate}

\textbf{Acknowledgement:}

The author would like to thank:

\begin{itemize}
 \item Florian Conrad for uncountable helpful and interesting discussions and for the essential ideas for the extension operator
 \item Supervisor Prof. Dr. M. Grothaus for his kind support and patience
 \item All other members of the Functional Analysis group
 \item Prof. Dr. von Weizsäcker for interesting discussions
 \item Benedikt Heinrich for reviewing this thesis
 \item Paul Taylor for the package for drawing commutative diagramms
 \item Eduard Helly for the Theorem of Hahn-Banach
\end{itemize}

\label{InvFuncTheo}


%

\end{document}